\newcommand{\vast}{\bBigg@{3}}
\newcommand{\Vast}{\bBigg@{4}}
\title{Temporal and Spatial Decomposition for Prospective Studies in Energy Systems under Uncertainty}
\author[1,2]{Camila Mart\'{\i}nez Parra}
\author[2]{Michel De Lara}
\author[2]{Jean-Philippe Chancelier}
\author[3]{Pierre Carpentier}
\author[1]{Jean-Marc Janin}
\affil[1]{R\'eseau de Transport d'\'Electricit\'e, France}
\affil[2]{CERMICS, \'Ecole nationale des ponts et chauss\'{e}es, IP~Paris, France}
\affil[3]{UMA, ENSTA Paris, IP~Paris, France}
\begin{document}

\maketitle

\begin{abstract}
  The increasing penetration of renewable energy requires greater use of storage resources to manage system 
  intermittency. As a result, there is growing interest in evaluating the opportunity cost of stored energy,
  or usage values, which can be derived by solving a multistage stochastic optimization problem. 
  Stochasticity arises from net demand (the aggregation of demand and non-dispatchable generation), 
  the availability of dispatchable generation, and inflows when the storage facilities considered are 
  hydroelectric dams.
  We aim to compute these usage values for each market zone of the
  interconnected European electricity system, in the context of prospective
  studies currently conducted by RTE, the French TSO.  The energy system is
  mathematically modelled as a directed graph, where nodes represent market
  zones and arcs represent interconnection links. In large energy systems,
  spatial complexity (thirty nodes in the system, each with at most one
  aggregated storage unit) compounds temporal complexity (a one-year horizon
  modelled with two timescales: weekly subproblems with hourly time steps).
  This work addresses three main sources of complexity: temporal, spatial, and
  stochastic. We tackle the multinode multistage stochastic optimisation problem
  by incorporating a spatio-temporal decomposition scheme. To efficiently
  compute usage values, we apply Dual Approximate Dynamic Programming (DADP),
  which enables tractable decomposition across both time and space.  This
  approach yields nodal usage values that depend solely on the local state of
  each node, independently of the others.
  We conduct numerical studies on a realistic system composed of thirty nodes
  (modelling part of Europe) and show that DADP obtains competitive results when
  comparing with traditional methods like Stochastic Dual Dynamic Programming
  (SDDP).
\end{abstract}

\textbf{Keywords.} Prospective studies, stochastic multistage optimization, spatial decomposition

\section{Introduction}\label{sec1}

In~\S\ref{Context_and_motivation}, we present the context of this work.
In~\S\ref{Mathematical_formulations_and_numerical_challenges}, we outline the specificities of
the mathematical formulations and the numerical challenges.
In~\S\ref{Contributions_and_outline_of_the_paper}, we stress the contributions
and we present the organization of the paper.

\subsection{Context and motivation}
\label{Context_and_motivation}

    This article is issued from a PhD thesis at {RTE} (Réseau de Transport d'Électricité), 
    the French electricity transmission system operator, responsible for managing, 
    operating, and developing the national high-voltage grid.
    Building on its operational and infrastructure responsibilities, {RTE} also plays a crucial role in 
    long-term energy planning and analysis. Its well-established commitment to prospective energy 
    studies complements its day-to-day management of the electricity system, and contributes to informed 
    decision-making at both national and European levels. 
    In France, RTE is required to publish long-term electricity outlooks (e.g., the biennial adequacy report~\cite{RTE_BP2023})
     and carries out broader foresight studies (such as \emph{Futurs énergétiques 2050}~\cite{RTE_FE2050})
      that assess demand, generation, interconnections and policy-driven pathways, providing independent
       scenario-based analysis for authorities, market participants and the public.

    Within these prospective studies, particular attention is paid to the role of storage facilities 
    in the evolving electricity system. Storage is increasingly recognized as a key tool to mitigate
     the variability of wind and solar generation by shifting energy across hours, days, and seasons, 
     and by providing fast reserves to ensure system security. Consequently, assessing the deployment 
     and operation of storage—both as a technical asset and as a decision variable in long-term 
     scenarios—has become an important component of {RTE}'s modelling and policy advice, as it directly 
     impacts system adequacy, flexibility, and the integration of renewable production.

    {RTE} uses {Antares}~\cite{Antares} to conduct prospective studies. 
    {Antares} simulates the operation of the power system over a long-term horizon 
    (typically one year with an hourly time step), modelling generation, consumption, 
    storage, transmission lines, and their interactions. 
    At the core of these simulations lies the
    \emph{resource adequacy problem}: a multistage optimization from the perspective of a 
    central planner that allocates dispatchable resources hour-by-hour, using an economic dispatch
    to maximize the social economic welfare. 
    In this setting, 
    production costs are assumed input data, and resources are dispatched in merit order, i.e., 
    the cheapest units are used before the more expensive ones. 
    
    Because electrical demand, 
    renewable generation, and thermal-unit availability are uncertain, 
    the resource adequacy problem is \emph{stochastic}. 
    When the problem accounts for different market zones connected by interconnections,
    it becomes \emph{spatially coupled}, allowing for exchanges between zones.
    This problem is naturally formulated on \emph{two timescales}, weekly and
    hourly, both coordinated for the coming week and respecting the hourly energy balances.
    Storage facilities induce \emph{temporal couplings} on both timescales,
    as the energy stored at a given time depends on previous charging and discharging decisions,
    and constrains future decisions by storage capacity limits.
    
    In this context, the question of when stored energy will be used arises. 
    \emph{Usage values} correspond to the marginal value of stored energy (storage prices):
    a price signal that allows choosing when and how much energy is used, depending on the energy
    system setting.
    %

\sloppy
\subsection{Mathematical formulations and numerical challenges}
\label{Mathematical_formulations_and_numerical_challenges}

    Mathematically, the resource adequacy problem is a large-scale,
    multistage stochastic optimization problem ({MSOP}) with two timescales, 
    spatial couplings due to energy exchanges through interconnections,
    and temporal couplings induced by storage facilities.
    Informally, a {MSOP} models sequential decision making under uncertainty over a finite timespan:
    at each stage, the decision maker ({DM}) observes the available information, 
    takes a control action, incurs a stage cost, and the system state evolves according 
    to possibly random dynamics. Decisions must be nonanticipative 
   (adapted to the information available at the time of the decision) and satisfy stagewise feasibility constraints. 
    The objective is to determine a policy that minimizes the expected sum of stage costs 
    (plus any terminal cost) subject to the stochastic dynamics and feasibility requirements.

    As our goal is to compute usage values, we are not primarily interested in solving the {MSOP} directly,
    but in applying temporal decomposition methods that compute cost-to-go functions from which usage values can be derived.
    Under certain assumptions, {MSOP}s can be temporally decomposed
    using dynamic programming
    techniques that break the problem into one-stage subproblems according to Bellman's principle of
    optimality~\cite{Bellman:1957},~\cite{Bertsekas-Shreve:1996}.
    To this end, cost-to-go functions are introduced: state-dependent functions at each stage of the problem
    that give the optimal cost incurred from that stage to the horizon.
    Usage values are derived from these cost-to-go functions.
    Even if, theoretically, the temporal decomposition of large-scale {MSOP}s is possible, 
    the curse of dimensionality~\cite{Bellman:1957} makes it intractable in practice when 
    the state space (number of storage facilities) is large (a few units), as is the case in the resource adequacy 
    problem in the European domain.
    
    The problem we are interested in solving is challenging because
    it exhibits three axes of complexity, as follows.
    \begin{itemize}
        \item[\textbf{1.}] \textbf{Temporal.}
        A two-timescale multistage optimisation problem that models 52~weeks of 168~hours each, in which 
        hourly and weekly decisions are coupled through the storage dynamics equations (storage energy balances).

        \item[\textbf{2.}] \textbf{Spatial.}
        Multiple market zones (nodes) connected by representative interconnections (arcs) compose 
        the spatial dimension; these zones are coupled through the {node/arc balancing equations},
        which enforce the supply–demand balance for the whole system.

        \item[\textbf{3.}] \textbf{Stochastic.}
        At each hour, demand, the availability of dispatchable units, and storage inflows are uncertain.
        Information constraints model the available 
        information to the decision maker at each stage of the multistage optimisation problem,
        that is, decisions should be nonanticipative.
\end{itemize}

    In this work, we address the full complexity of the problem, introducing
    spatial decomposition methods to be applied alongside temporal decomposition methods.
    
    \subsection{Contributions and outline of the paper}
    \label{Contributions_and_outline_of_the_paper}
    
    This work makes the following contributions to the field of two-timescale 
    multistage stochastic optimization
    problems, with a focus on applications in prospective studies of 
    energy systems.

        We adapt the Dual Approximate Dynamic Programming ({DADP}) method~\cite{pacaud2021distributed}
        to two-timescale multistage stochastic optimization problems under different information structures,
        presenting the interactions between spatial and temporal decompositions.
        {DADP}
        consists of using a Lagrangian relaxation of the energy balancing spatial coupling constraint, 
        to get a lower bound of the global optimisation problem.
        When relaxing the coupling constraint,
        we introduce a Lagrangian multiplier~$\price$, the \emph{price decomposition process},
        {that we consider in the deterministic class}.
        Once the coupling constraint is relaxed, it is possible to solve independently the nodal production problems
        and the transport problem in the arcs, by Stochastic Dynamic Programming ({SDP}) at each node --- where the nodal state is the
        level in the local storage (made possible as the {price decomposition process}~$\price$ is
        a deterministic vector).
        We also provide practical guidelines for selecting the price decomposition process
        in the {DADP} method, detailing the trade-off between computational efficiency and
        accuracy in estimating cost-to-go functions and usage values for storage.
        The main numerical interest in the {DADP} method 
        is that, as a consequence of the spatial decomposition, 
        the nodal problems are independent and can be solved using univariate SDP during the
        price decomposition process selection. 
        Once the price decomposition process is selected,
        the we use the sum of the nodal cost-to-go functions to approximate the global cost-to-go functions,
        and use them to compute policies for the global problem.

        \bigskip

        The paper is organized as follows.
        
        In Section~\ref{section:Multistage_multinode_stochastic_optimisation_problem}, 
        we introduce the two-timescale multistage stochastic optimization problem with spatial coupling
        constraints, providing a global formulation of the problem using local formulations for each node. 

        In Section~\ref{section:Mixing_spatial_and_temporal_decomposition},
         we present the {DADP} method adapted to
        two-timescale multistage stochastic optimization problems, detailing the interactions between
        spatial and temporal decompositions, as well as different structures for the price decomposition process (multiplier).
        We also discuss how to improve this price decomposition process. By comparison with
        \cite{pacaud2021distributed}, we use a Bellman-like algorithm to produce proxies of
        gradients~\cite{franc2023differentiabilityregularizationparametricconvex}
        to be used in a gradient-like method.

    In Section~\ref{section:Modelling_and_numerical_study_in_multinode_systems}, 
    we benchmark the {DADP} method for
    two-timescale multistage stochastic optimization problems, using different price decomposition process structures, 
     with Stochastic Dual Dynamic Programming ({SDDP})~\cite{Pereira-Pinto:1991}.
    We conduct a large-scale numerical study of the resource adequacy problem
    considering the central-western European power system at high
    spatial resolution, representing 30 market zones — 20 of which include storage facilities —
    over a one-year timespan, with a weekly temporal decomposition and hourly decisions and constraints.

    Finally, Section~\ref{section:Conclusion_and_perspectives} concludes the article
    and discusses future research directions.

\section{Multistage multinode stochastic optimisation problem}
\label{section:Multistage_multinode_stochastic_optimisation_problem}

In this section, we introduce 
the two-timescale multistage stochastic optimization problem
with spatial coupling constraints.
In~\S\ref{subsection:Mathematical_formulation_of_the_problem}, 
we define the mathematical notations and present the physical and economic 
modelling of the multinode energy system.
In~\S\ref{subsection:Global_optimisation_problem_formulation}, 
we state the global formulation of the two-timescale multistage stochastic optimization problem
with spatial coupling constraints,
defining local formulations for each node and the transport problem.
In~\S\ref{subsection:Temporal_decomposition}, we detail the temporal decomposition of the global problem.

\subsection{Definitions and notations}
\label{subsection:Mathematical_formulation_of_the_problem}

In~\S\ref{subsubsection:Timescale_graph_and_variables_definition}, 
we define the timescales, the directed graph modelling the multinode system,
and the decision, uncertain, and state variables.
In~\S\ref{subsubsection:Physical_and_economic_constraints_in_a_multi_node_system},
we present the physical and economic modelling of the multinode energy system.
In~\S\ref{subsubsection:Random_variables_multi_node_system},
we introduce the stochastic modelling of the uncertainties and the information constraints.

\subsubsection{Timescale, graph and variables definition}
\label{subsubsection:Timescale_graph_and_variables_definition}

In this part, we define the timeline with two timescales. 
We then introduce the directed graph that models the multinode system. 
For each node, we define the nonanticipative control variables, the uncertain variables, 
the recourse control variables, and the storage level variables. 
We also introduce the nodal flow variables, representing energy imports and exports at each node,
as well as the flow variables associated with the arcs of the directed graph.

\phantomsection

\paragraph{Timescale definition.}

We consider a timeline with a long timescale and a short timescale.
The short and long timescales could be any two scales, as long as one is larger than the other.
In this work, the long timescale is given by weeks that are represented by a finite totally ordered set
$\np{\WEEK, \preceq}$,
where $\week\successor$ is the successor of $\week\in \WEEK$  and $\week\predecessor$
its predecessor: \(\binf{\week} \prec \cdots \prec \week\predecessor \prec \week
\prec \week\successor \prec \cdots \prec \bsup{\week}\) (where $\prec$ is the strict order associated with the order $\preceq$).
Then, $\WEEK = \nc{\binf{\week}, \bsup{\week}}$.
The short timescale, hours in this case, is represented by a finite totally ordered set $\np{\HOUR, \preceq}$:
\(\binf{\hour} \prec  \cdots  \prec \hour\predecessor \prec\hour \prec\hour\successor \prec \cdots
\prec \bsup{\hour}\).
Then, $\HOUR = \nc{\binf{\hour}, \bsup{\hour}}$.

 To unify the timescale we consider the product set $\timeline$ ordered as follows:
\begin{align}
  \label{eq:timeline}
  \winfhinf&
          \prec\cdots\prec \np{\week\predecessor,\bsup{\hour}}
          \prec \np{\week, \binf{\hour}}
          \prec \np{\week,\binf{\hour}\successor}\prec \cdots
          \nonumber\\
  \cdots &
           \np{\week, \bsup{\hour}\predecessor}\prec\np{\week,\bsup{\hour}}
           \prec \np{\week\successor,\binf{\hour}}\prec \cdots \prec \wsuphsup
           \eqfinp
\end{align}
We consider a timespan of one year, and $\winfhinf$ is the instant corresponding
to the first hour of the first week of the year, and
$(\bsup{\week},\bsup{\hour})$ is the last hour of the last week
of the year.
We need to define an extra time $\whlast$ at its
end to handle the resulting state of the last decision.
The extended unified timeline $\extimeline$ is defined as $\timeline \cup
\na{\whlast}$.

We define  \(\closedWopen{\week} =\bp{ {\whinf}, {\whinfsuccessor}, \dots,  {\whsup}} \)
and  \( \allowbreak \openWclosed{\week} = \left({\whinfsuccessor}, \dots,  {\whsup},\right.\) \(\left.{\wsuccessorhinf}\right) \).
Thus, we use a simple bracket $\left[\right.$ or $\left.\right]$ to denote intervals of the
elementary timelines $\np{\HOUR, \preceq}$ and  $\np{\WEEK, \preceq}$. By contrast, we use
double brackets $\llbracket$ or $\rrbracket$  for the composite (product) timeline $\np{\extimeline, \preceq}$.
The different possibilities to index a variable (respectively a function) by time are detailed in
Table~\ref{Table:variablesnotationresume}
(respectively in  Table~\ref{table:functionsnotationresume}).
\begin{table}[!ht]
  \renewcommand{\arraystretch}{1.4}
  \centering
\resizebox{0.49\textwidth}{!}{
  \begin{subtable}[t]{0.46\textwidth}
    \centering
    \small
    \begin{tabular}{|c|c|p{4.2cm}|}
      \hline
      Index & Notation & Description \\ \hline \hline
      $\wh$ & $z_{\wh}$ & Variable at $\wh$ \\ \hline
      $\week$ & $z_{\week}$ & Representative variable for the week $s$ corresponding to the variable at $\whinf$ \\ \hline
      $\closedWopen{\week}$ & $z_{\closedWopen{\week}}$ & Sequence of hourly variables given by $\bp{ z_{\whinf}, z_{\whinfsuccessor}, \dots,  z_{\whsup}}$ \\ \hline
      $\openWclosed{\week}$ & $z_{\openWclosed{\week}}$ & Sequence of hourly variables given by $\bp{z_{\whinfsuccessor}, \dots,  z_{\whsup}, z_{\wsuccessorhinf}}$ \\ \hline
    \end{tabular}
    \caption{\small Variables notation}
    \label{Table:variablesnotationresume}
  \end{subtable}}
  \hfill
  \resizebox{0.49\textwidth}{!}{
  \begin{subtable}[t]{0.46\textwidth}
    \centering
    \small
    \begin{tabular}{|c|c|p{4.2cm}|}
      \hline
      Index & Notation & Description \\ \hline \hline
      $\wh$ & $\zeta_{\wh}$ & Function expression at $\wh$ \\ \hline
      $\week$ & $\zeta_{\week}$ & Characteristic aggregation of the hourly functions $\zeta_{\wh}$ for the week $s$ \\ \hline
      $\closedWopen{\week}$ & $\zeta_{\closedWopen{\week}}$ & Sequence of hourly functions $\zeta_{\wh}$ given by $\bp{ \zeta_{\whinf}, \zeta_{\whinfsuccessor}, \dots,  \zeta_{\whsup}}$ \\ \hline
      $\openWclosed{\week}$ & $\zeta_{\openWclosed{\week}}$ & Sequence of hourly functions $\zeta_{\wh}$ given by $\bp{ \zeta_{\whinfsuccessor},  \dots, \zeta_{\whsup}, \zeta_{\wsuccessorhinf}}$ \\ \hline
    \end{tabular}
    \caption{\small Functions notation}
    \label{table:functionsnotationresume}
  \end{subtable}
  }
  \caption{Summary of variable and function notations}
  \label{table:variables_functions_summary}
\end{table}
The characteristic aggregation in Table~\ref{table:functionsnotationresume} could be a sum over~$\hour\in\HOUR$, a composition
with respect to the state, or a combination of both.

\paragraph{Directed graph modelling the multinode system.}
  We model the electrical system as a directed graph  $\graph\np{\NODE, \ARC}$ where a
  node in~$\NODE$ represents a given group of  production units, load, and storage,  
  and an arc in~$\ARC$ represents the link between two nodes of the network. At each node, 
  there may be different types of storage and  more than one of each type. 
  An individual node in $\NODE$ is denoted by $\node$ and an individual  arc in $\ARC$ by $\arc$.

\paragraph{Nodal uncertain variables.}

We consider, for all~$\node\in\NODE$ and for all~$\wh\in\extimeline$, 
the uncertain variable~$\uncertain_{\wh}^{\node} \in \UNCERTAIN_{\wh}^{\node}$
at node~$\node$, {where $\UNCERTAIN_{\wh}^{\node}$ is a measurable space}.
The collection of uncertainties for week $\week$ at 
node $\node$ is denoted by $ \uncertain_{\openWclosed{\week}}^{\node}
        =
            \bp{
        \uncertain_{\whinfsuccessor}^{\node},
            \dots, 
            \uncertain_{\wsuccessorhinf}^{\node}}
        \in 
        \UNCERTAIN_{\openWclosed{\week}}^{\node}
                =
            \UNCERTAIN_{\whinfsuccessor}^{\node}
            \times
            \dots
            \times
            \UNCERTAIN_{\wsuccessorhinf}^{\node}$.
The collection of uncertainties for all
nodes $\node\in \NODE$  by~$
    \uncertain_{\wh}
        =
            \bseqp{\uncertain_{\wh}^{\node}}{\node\in\NODE}    
                \in
            \prod_{\node\in\NODE}\UNCERTAIN_{\wh}^{\node} \eqsepv \forall \wh\in\extimeline$.

\paragraph{Nodal recourse control variables.} 
\label{subsection:Nodal_recourse_control_variables}

We consider, for all~$\node\in\NODE$ and for all~$\wh\in\timeline$,
the recourse control~$\recoursecontrol_{\wh\successor}^{\node} \in
\RECOURSECONTROL_{\wh\successor}^{\node}$
at node~$\node$.
More precisely, during the interval $\ClosedIntervalOpen{\wh}{\wh\successor}$,
the noise $\uncertain_{\wh\successor}$ occurs, 
and a recourse decision $\recoursecontrol_{\wh\successor}$ 
is taken knowing $\uncertain_{\wh\successor}$. 
The collection of recourse controls for week $\week$ at node 
$\node$ is denoted by~$\recoursecontrol_{\openWclosed{\week}}^{\node}
            =
                \bp{
            \recoursecontrol_{\whinfsuccessor}^{\node},
                \dots, 
                \recoursecontrol_{\wsuccessorhinf}^{\node}}
            \in 
            \RECOURSECONTROL_{\openWclosed{\week}}^{\node}
                    =
                \RECOURSECONTROL_{\whinfsuccessor}^{\node}
                \times
                \dots
                \times
                \RECOURSECONTROL_{\wsuccessorhinf}^{\node}$.
The collection of recourse controls for all nodes
$\node\in \NODE$ by~$ \recoursecontrol_{\wh\successor  }
        =
            \bseqp{\recoursecontrol_{\wh\successor}^{\node}}{\node\in\NODE}    
                \in
            \prod_{\node\in\NODE}\RECOURSECONTROL_{\wh\successor}^{\node}
            \eqsepv \forall \wh\in\timeline$.

\paragraph{Nodal storage level variables.}

We consider, for all~$\node\in\NODE$ and for all~$\wh\in\extimeline$, 
the storage level variable~$\stock_{\wh}^{\node} \in \STOCK_{\wh}^{\node}$
at node~$\node$.
In general, $\STOCK_{\wh}^{\node} \subseteq \RR_+^d$, with $d$ 
equal to the number of storages at the node $\node$.
The collection of storage level variables for week $\week$ at node $\node$ is denoted 
by~$ \stock_{\openWclosed{\week}}^{\node}
         = \bp{
             \stock_{\whinfsuccessor}^{\node},
                 \dots, 
                 \stock_{\wsuccessorhinf}^{\node}}
         \in 
         \STOCK_{\openWclosed{\week}}^{\node}
         =\STOCK_{\whinfsuccessor}^{\node}
             \times
             \dots
             \times
             \STOCK_{\wsuccessorhinf}^{\node}$.
The collection of storage level variables for all nodes
    $\node\in \NODE$ by~$
    \stock_{\wh}
        =
            \bseqp{\stock_{\wh}^{\node}}{\node\in\NODE}    
                \in
            \prod_{\node\in\NODE}\STOCK_{\wh}^{\node} \eqsepv \forall \wh\in\extimeline$.

\paragraph{Nodal flows: import and export variables.}

Each node~$\node$ is capable of importing or exporting through the arcs
connected to it. We consider for all~$\node\in\NODE$ and for all~$\wh\in\timeline$,
the flow variable~$\flownode_{\wh\successor}^{\node} \in \RR$ in node~$\node$.
We denote the collection of flow variables for week $\week$ at node $\node$ 
 by~$ \flownode_{\openWclosed{\week}}^{\node}
            = \bp{
                \flownode_{\whinfsuccessor}^{\node},
                    \dots, 
                    \flownode_{\wsuccessorhinf}^{\node}}
            \in 
           \RR^{\cardinality{\HOUR}} $.
We denote the collection of resulting flow variables for all nodes
$\node\in \NODE$  by~$ 
 \flownode_{\wh\successor}
        =
            \bseqp{\flownode_{\wh\successor}^{\node}}{\node\in\NODE}    
                \in
            \RR^{\cardinality{\NODE}} \eqsepv \forall \wh\in\timeline$.
We denote by $\flownode$ the collection of nodal flows for all hours in the year
and all nodes in the system, that is,~$
    \flownode
        =
            \bseqp{\flownode_{\wh\successor}}{\wh\in\timeline}
                \in
            \RR^{\cardinality{\HOUR}\times\cardinality{\WEEK}\times\cardinality{\NODE}}$.

By convention,  $\flownode^{\node}<0$ denotes an exportation (additional demand)
from the node $\node$ and $\flownode^{\node}>0$ denotes an importation (additional production)
to the node $\node$.
From a modelling point of view, it is natural to consider the import/export flows
as {recourse} variables, since they are determined after the uncertainties occur.

\paragraph{Flows through the arcs.}

We consider, for all~$\arc\in\ARC$ and for all~$\wh\in\timeline$,
the arc flow variable~$\flowarc_{\wh\successor}^{\arc} \in \RR$
in the arc~$\arc$.
We denote the collection of arc flow variables for week $\week$ at arc $\arc$  by
$ \flowarc_{\openWclosed{\week}}^{\arc}
            = \bp{
                \flowarc_{\whinfsuccessor}^{\arc},
                    \dots, 
                    \flowarc_{\wsuccessorhinf}^{\arc}}
            \in 
            \RR^{\cardinality{\HOUR}} $.
We denote
the collection of arc flow variables for all arcs
$\arc\in \ARC$  by~$
    \flowarc_{\wh\successor}
        =
            \bseqp{\flowarc_{\wh\successor}^{\arc}}{\arc\in\ARC}    
                \in
            \RR^{\cardinality{\ARC}} \eqsepv \forall \wh\in\timeline$.
We denote by $\flowarc$ the collection of arc flows for all hours in the year
and all arcs in the system, that is,~$
    \flowarc
        =
            \bseqp{\flowarc_{\wh\successor}}{\wh\in\timeline}
                \in
            \RR^{\cardinality{\HOUR}\times\cardinality{\WEEK}\times\cardinality{\ARC}}$.

If the arc~$\arc$ is directed from node $i$ to node  $j$, by convention, 
$\flowarc^{\arc} > 0$ is a flow from node $i$ to node $j$, and
$\flowarc^{\arc} < 0$ is a flow from node $j$ to node $i$.
From a modelling point of view, it is natural to consider the flows through the arcs 
as {recourse} variables, since they are determined after the uncertainties occur.

\subsubsection{Physical and economic constraints in a multinode system}
\label{subsubsection:Physical_and_economic_constraints_in_a_multi_node_system}
This~\S\ref{subsubsection:Physical_and_economic_constraints_in_a_multi_node_system}
 introduces the main physical and economic constraints for the multinode system. 
We first describe the nodal storage dynamics and the nodal energy balance. 
Next, we present the spatial coupling constraint that couples the nodes and arcs of the directed graph. 
We then specify the box constraints for the nodal and arc variables. 
Finally, we define the nodal instantaneous cost and the transportation cost associated with the arcs.

\phantomsection
\paragraph{Nodal storage dynamics.}
 We consider a family~$\bseqp{\nseqp{\dynamics_{\wh}^{\node}}{\node\in\NODE}}{\wh\in\timeline}$ of 
nodal dynamics measurable mappings~$  \dynamics_{\wh}^{\node}: 
    \STOCK_{\wh}^{\node}
        \times 
    \UNCERTAIN_{\wh\successor}^{\node}  
    \times 
    \RECOURSECONTROL_{\wh\successor}^{\node}
  \to \STOCK_{\wh\successor}^{\node}$, 
that respresent the nodal storage evolution at the hourly timescale, 
for all time $\wh\in\timeline$
and all node~$\node\in \NODE$, given by
\begin{subequations} \label{eq:dynamics}
\begin{equation}\label{eq:HourlyDynamicsNodal}
     \stock_{\wh\successor}^{\node} =  \dynamics_{\wh}^{\node}\bp{
                                        \stock_{\wh}^{\node},
                                        \uncertain_{\wh\successor}^{\node},
                                        \recoursecontrol_{\wh\successor}^{\node}
                                    }
                                    \eqfinp
\end{equation}

Each hourly storage level $\bseqp{\stock_{\wh\successor}^{\node}}{\hour\in\HOUR}$ 
is obtained from the stock level 
  composition of the hourly dynamics $\dynamics_{\wh}^{\node}$    from the first hour of the week.
  This composition is made only with respect to the storage levels. 
In particular, weekly dynamics~$\dynamics_{\week}^{\node}:
    \STOCK_{\whinf}^{\node}
    \times
    \UNCERTAIN_{\openWclosed{\week}}^{\node}
        \times 
    \RECOURSECONTROL_{\openWclosed{\week}}^{\node}
    \to \STOCK_{\wsuccessorhinf}^{\node}
$ for the node $\node$,
\begin{equation}\label{eq:WeeklyDynamicsNodal}
   \stock_{\wsuccessorhinf}^{\node}
   =
    \dynamics_{\week}^{\node}\Bp{
                                        \stock_{\whinf}^{\node},
                                        \uncertain_{\openWclosed{\week}}^{\node},
                                        \recoursecontrol_{\openWclosed{\week}}^{\node}
                                    }
    \eqfinv
\end{equation}
is obtained
by composition of the nodal hourly dynamics $\dynamics_{\wh}^{\node}$ for $\hour\in\HOUR$ as follows.
For each node~$\node$, 
 from the storage level~$\stock_{\whinf}^{\node}$
at the first hour of the current week and at
node~${\node}$, 
the collection of 
recourse controls
$\recoursecontrol_{\openWclosed{\week}}^{\node}$ for the week $\week$ at node
${\node}$, and  the collection  of uncertainties $\uncertain_{\openWclosed{\week}}^{\node}$ for the week $\week$ at node
${\node}$,  we obtain the storage level at the first hour of the following week~$\stock_{\wsuccessorhinf}^{\node}$.
 \end{subequations}
We stress that,   in Equation~\eqref{eq:HourlyDynamicsNodal} and Equation~\eqref{eq:WeeklyDynamicsNodal}, 
  the dynamics at node~$\node$ depend only on the storage levels,  controls, 
  and uncertainties at node~$\node$.

 Notice that, while in the hourly approach in Equation \eqref{eq:HourlyDynamicsNodal} the 
 uncertain term is $\uncertain_{\wh\successor}^{\node}$, 
 that is, the uncertainty in the interval between
$\wh$ and ${\wh}\successor$, in the weekly approach in Equation  \eqref{eq:WeeklyDynamicsNodal} 
the uncertainty term is \( \uncertain_{\openWclosed{\week}}^{\node} = \np{ 
\uncertain_{\couple{\week}{\binf{\hour}\successor}}^{\node}, \dots, 
\uncertain_{\couple{\week\successor}{\binf{\hour}}}^{\node}}\), that is, the uncertainty
during the current week (which is the time interval considered in this approach).

\paragraph{Nodal energy balance.}
We consider a family~$\bseqp{\nseqp{\nodeBalance_{\wh}^{\node}}{\node\in\NODE}}{\wh \in\timeline}$ 
of nodal balance measurable functions at the hourly timescale given,
for all time $\wh \in \timeline$ and for all node $\node\in\NODE$, by 
functions~$  \nodeBalance_{\wh}^{\node}:
        \UNCERTAIN_{\wh\successor}^{\node}
        \times
        \RECOURSECONTROL_{\wh\successor}^{\node}
    \to  \RR$.
\begin{subequations}
The hourly balance constraint is
\begin{align}
    \nodeBalance_{\wh}^{\node}\Bp{
                    \uncertain_{\wh\successor}^{\node}, 
                    \recoursecontrol_{\wh\successor}^{\node}
                                 } =  \flownode_{\wh\successor}^{\node} 
                                 \eqfinp
\end{align}
We write a weekly version of the nodal energy balance considering the 
functions $ \nodeBalance_{\closedWopen{\week}}:
        \UNCERTAIN_{\openWclosed{\week}}^{\node}
        \times
        \RECOURSECONTROL_{\openWclosed{\week}}^{\node}    
    \to 
    \RR^{\cardinal{\HOUR}}$,
    {obtained by concatenation of the hourly balance functions}.
The weekly balance constraint for all~$\week \in \WEEK$ and for all~$\node\in\NODE$ is
\begin{align}
  \label{eq:WeeklyNodalBalanceConstraint}
\nodeBalance_{\closedWopen{\week}}^{\node}\bp{
                            \uncertain_{\openWclosed{\week}}^{\node},
                            \recoursecontrol_{\openWclosed{\week}}^{\node} 
                        }
           = \flownode_{\openWclosed{\week}}^{\node} 
           \eqfinp          
\end{align}
\end{subequations}
Note that the weekly nodal balance in Equation \eqref{eq:WeeklyNodalBalanceConstraint}
is obtained as the collection of the hourly nodal balances for all hours in the week.

\paragraph{Spatial coupling constraint: Kirchhoff's law.}
The node flows $\flownode_{\wh}^{\node}$ and the flows through the arcs
$\flowarc_{\wh}^{\arc}$ are related by Kirchhoff’s law (nodal energy balance at each hour).

To write the spatial coupling constraint, we consider the node-arc incidence
matrix~$\NodeArcMatrix \in \na{-1, 0, 1}^{\NODE \times \ARC}$ of the directed $\graph\np{\NODE,\ARC}$. 
The matrix~$\NodeArcMatrix$ has $\cardinality{\NODE}$ rows and 
$\cardinality{\ARC}$ columns. Each entry of the matrix indicates if the arc at column $\arc$ is
heading from ($\NodeArcMatrix_{\couple{\node}{\arc}}=1$) or to 
($\NodeArcMatrix_{\couple{\node}{\arc}}=-1$) the node in row $\node$ or not connected to it
($\NodeArcMatrix_{\couple{\node}{\arc}}=0$).
We write the balance equation as~$  \flownode_{\wh\successor}
        -   \NodeArcMatrix \flowarc_{\wh\successor} 
          = 0 \eqsepv 
        \forall \wh \in \timeline$,
or, with the global definitions
of the node flows and arc flows in~\S\ref{subsubsection:Timescale_graph_and_variables_definition}, as
\begin{equation}
    \label{eq:CouplingConstraint}
        \flownode
        -\NODEARCMATRIX \flowarc 
         = 0 \eqsepv
   \end{equation}
where $\NODEARCMATRIX$ is a block diagonal matrix with the node-arc incidence 
matrix $\NodeArcMatrix$ as its diagonal blocks.

\paragraph{Box constraints in nodal and arc variables.}
When we use the notation $\bc{\binf{z},\bsup{z}}$ for vectors $\binf{z}, \bsup{z} \in \RR^d$, 
that means the box \(\prod^d_{i=1} \bc{\binf{z}_i,\bsup{z}_i}\).

We consider that the nodal storage level variables,  the nodal recourse control variables 
and the arc flow variables are bounded as follows (with obvious notation for the bounds).
The box constraints are given in the hourly scale for 
all~$\wh \in \timeline$:~$  \stock_{\wh\successor}^{\node} \in
            \bc{\binf{\stock}_{\wh\successor}^{\node},
            \bsup{\stock}_{\wh\successor}^{\node}}
            \subset \STOCK_{\wh\successor}^{\node}$,$\recoursecontrol_{\wh\successor}^{\node} \in
            \bc{\binf{\recoursecontrol}_{\wh\successor}^{\node},
            \bsup{\recoursecontrol}_{\wh\successor}^{\node}}
            \subset \RECOURSECONTROL_{\wh\successor}^{\node}$,
and~$  \flowarc_{\wh\successor}^{\arc} \in
            \bc{\binf{\flowarc}_{\wh\successor}^{\arc},
            \bsup{\flowarc}_{\wh\successor}^{\arc}}
            \subset \RR$.
Equivalently, we write the box constraints in a weekly vectorized form,
for all $\week \in \WEEK$, as
 \begin{subequations} \label{eq:box_weekly}
        \begin{align}
        \stock_{\openWclosed{\week}}^{\node} &\in
            \bc{\binf{\stock}_{\openWclosed{\week}}^{\node},
            \bsup{\stock}_{\openWclosed{\week}}^{\node}}
            \subset \STOCK_{\openWclosed{\week}}^{\node} \label{eq:StorageLevelBounds_weekly} \eqfinv \\
        \recoursecontrol_{\openWclosed{\week}}^{\node} &\in
            \bc{\binf{\recoursecontrol}_{\openWclosed{\week}}^{\node},
            \bsup{\recoursecontrol}_{\openWclosed{\week}}^{\node}}
            \subset \RECOURSECONTROL_{\openWclosed{\week}}^{\node} \label{eq:RecourseControlBounds_weekly} \eqfinv  \\
        \flowarc_{\openWclosed{\week}}^{\arc} &\in
            \bc{\binf{\flowarc}_{\openWclosed{\week}}^{\arc},
            \bsup{\flowarc}_{\openWclosed{\week}}^{\arc}}
            \subset \RR^{\cardinality{\HOUR}} \label{eq:ArcFlowBounds_weekly} \eqfinp
        \end{align}
\end{subequations}

\paragraph{Nodal instantaneous cost.}
 We consider a family~$\bseqp{\nseqp{ \InstantaneousCost_{\wh}^{\node}}{\node\in\NODE}}{\wh\in\timeline}$
 of nodal hourly instantaneous cost functions~$ \InstantaneousCost_{\wh}^{\node}: 
            \STOCK_{\wh}^{\node}
             \times 
            \UNCERTAIN_{\wh\successor} ^{\node}
                \times 
            \RECOURSECONTROL_{\wh\successor}^{\node}   
    \to 
            \RR \cup \na{+ \infty}$,
that represent the nodal cost 
$\InstantaneousCost_{\wh}^{\node} \bp{
        \stock_{\wh}^{\node},
        \uncertain_{\wh\successor}^{\node}, 
        \recoursecontrol_{\wh\successor}^{\node}}$
at node~$\node$, when the storage level is
$\stock_{\wh}^{\node}$, the uncertain variable is $\uncertain_{\wh\successor}^{\node}$, 
and the recourse control is
$\recoursecontrol_{\wh\successor}^{\node}$.
The value~$+\infty$ corresponds to infeasibility (constraints not satisfied).

The weekly nodal cost~$\InstantaneousCost_{\week}^{\node}$ is defined as the sum of the hourly 
costs~$\InstantaneousCost_{\wh}^{\node}$ over all hours of the week
\begin{align}
    \InstantaneousCost_{\week}^{\node}
        \bp{
            \stock_{\whinf}^{\node},
            \uncertain_{\openWclosed{\week}}^{\node}, 
            \recoursecontrol_{\openWclosed{\week}}^{\node}
        }
        &=
        \sum_{\hour \in \HOUR} \InstantaneousCost_{\wh}^{\node} \bp{
            \stock_{\wh}^{\node},
            \uncertain_{\wh\successor}^{\node}, 
            \recoursecontrol_{\wh\successor}^{\node}
        }\eqfinv
\end{align} 
where each hourly storage level $\bseqp{\stock_{\wh\successor}^{\node}}{\hour\in\HOUR}$ 
is obtained from the stock level 
  composition of the hourly dynamics $\dynamics_{\wh}^{\node}$
   (see Equation~\eqref{eq:WeeklyDynamicsNodal}).
Therefore, the weekly cost is a function of the initial storage level $\stock_{\whinf}^{\node}$ for the week,
the collection of uncertainties $\uncertain_{\openWclosed{\week}}^{\node}$ for the week,
 and the collection of recourse controls $\recoursecontrol_{\openWclosed{\week}}^{\node}$ for the week.

\paragraph{Transportation cost in arcs.}

We consider a family~$\bseqp{\nseqp{\InstantaneousCost_{\wh}^{\arc}}{\arc\in\ARC}}{\wh\in\timeline}$
 of hourly transportation cost functions through the arcs of the directed graph $\graph\np{\NODE,\ARC}$ given,
 for all $\wh \in \timeline$ and $\arc\in\ARC$, by a function~$  \InstantaneousCost_{\wh}^{\arc}: 
            \RR
    \to 
            \RR \cup \na{+ \infty}$.
The transportation cost is not a real cost in the system, but a cost added to 
reduce the flow through the arcs, avoiding parasite flows in the system.

As the flow~$\flowarc_{\wh\successor}^{\arc}$ can take positive values and negative values, 
$\InstantaneousCost_{\wh}^{\arc}\bp{ \flowarc_{\wh\successor}^{\arc}}$ is modeled as a quadratic function
of the arc flow.
We impose the bound constraints given 
in Equation~\eqref{eq:ArcFlowBounds_weekly}
as a penalty term
in the cost function 
using a characteristic function of the interval
$\nc{\binf{\flowarc}_{\openWclosed{\week}}^{\arc}, \bsup{\flowarc}_{\openWclosed{\week}}^{\arc}}$.

We write the weekly transportation cost~$\InstantaneousCost_{\week}^{\arc}$
through the arc $\arc$~$  \InstantaneousCost_{\week}^{\arc}: 
    \RR^{\cardinal{\HOUR}}                                                                                                                                                                                                                                                                                                                                                                                                                                                                                                                                                                                                                                                                                                                                                                                                                                                                                                                                                                                                                                                                                                                                                                                                                                                                                                                                                                                                                                                                                                                                                                                                                                                                                                                                                                                                                                                                                                                                                                                                                                                                                                                                                                                                                                                                                                                                                                                                                                                                                                                                                                                                                                                                                                                                                                                                                                                                                                                                                                                                                                                                                                                                                                                                                                                                                                                                                                                                                                                                                                                                                                                                                                                                                                                                                                                                                                                                                                                                                                                                                                                                                                                                                                                                                                                                                                                                                                                                                                                                                                                                                                                                                                                                                                                                               
    \to 
            \RR \cup \na{+ \infty}$, 
obtained as the sum of the hourly transportation costs~$\InstantaneousCost_{\wh}^{\arc}$
 over all hours of the week, as
\begin{equation}
    \InstantaneousCost_{\week}^{\arc}\bp{
            \flowarc_{\openWclosed{\week}}^{\arc}
            }
            =
            \sum_{\hour\in\HOUR}
                \InstantaneousCost_{\wh}^{\arc}\bp{
                \flowarc_{\wh\successor}^{\arc}
                }
                    \eqfinp
        \label{eq:WeeklyTransportationCost}
\end{equation}

\subsubsection{Random variables}
\label{subsubsection:Random_variables_multi_node_system}

\phantomsection
\subsubsubsection{Probability spaces}

{We introduce the measurable space~$\np{\Omega, \trib}$, where
  \begin{subequations}
    \begin{equation}
      \Omega= \prod_{\wh\in\extimeline} \UNCERTAIN_{\wh}
      \eqfinv
   \label{eq:Omega}
    \end{equation}
    equipped with the product $\sigma$-algebra~$\trib$
    (recall that each $\UNCERTAIN_{\wh}^{\node}$ is a measurable space).
    Then, we denote by 
\begin{align}
 &\va{\Uncertain} = \bp{\va{\Uncertain}_{\winfhinf}, \cdots,
  \va{\Uncertain}_{\whlast}} :
   \Omega \to \prod_{\wh\in\extimeline} \UNCERTAIN_{\wh}  
   \label{eq:uncertainty_process_multinode}
   \intertext{the canonical random process made of the coordinate mappings ($\va{\Uncertain}$ is the identity mapping),
   and we also denote}
&\va{\Uncertain}_{\wh} = \bseqp{\va{\Uncertain}_{\wh}^{\node}}{\node\in\NODE} \eqsepv \forall\wh\in\extimeline
                  \label{eq:uncertainty_process_multinode_hour}
\end{align}
\end{subequations}
which represents the collection of uncertainties for all nodes $\node\in\NODE$ at $\wh$.
As we consider a centralized information structure, the information constraints in
this modelling will encompass the collection of all nodal uncertainties,
hence it will be practical to use the notation~\eqref{eq:uncertainty_process_multinode_hour}.
The measurable space~$\np{\Omega, \trib}$ in~\eqref{eq:Omega} is equipped with a probability~$\prbt$.
}

\begin{remark}\label{remark:empirical_probability_multinode}
    Suppose given a set $\CHRONICLE$ that indexes so-called
    \emph{uncertainty chronicles}~$\uncertain^{\chronicle}= \bp{
                                    \np{\uncertain_{\openWclosed{\weekinf}}}^{\chronicle}, \dots,
                                    \np{\uncertain_{\openWclosed{\week}}}^{\chronicle}
                                    , \dots,
                                    \np{\uncertain_{\openWclosed{\weeksup}}}^{\chronicle}}
            \eqsepv \forall \chronicle \in \CHRONICLE$,
    \begin{subequations}\label{eq:empirical_probability_multinode}
            where $\np{\uncertain_{\openWclosed{\week}}}^{\chronicle}=
        \bp{\nseqp{\uncertain_{\openWclosed{\week}}^{\node}}{\node\in\NODE}}^{\chronicle}$.
        From~$\CHRONICLE$, we build, 
        on the sample space~$\Omega$ in~\eqref{eq:Omega},
what we call the \emph{empirical probability}
        \begin{equation}\label{eq:empirical_probability_Prbt_multinode}
            \widetilde{\prbt} = \frac{1}{\cardinality{\CHRONICLE}} \sum_{c\in\CHRONICLE}
            \delta_{\np{\np{\uncertain_{\openWclosed{\weekinf}}}^{\chronicle}, \dots,
                                    \np{\uncertain_{\openWclosed{\week}}}^{\chronicle}
                                    , \dots,
                                    \np{\uncertain_{\openWclosed{\weeksup}}}^{\chronicle}}}
            \eqfinv
        \end{equation}
        where $\delta$ denotes the the Dirac mass.
        The support of~$\widetilde{\prbt}$ is made of $\cardinality{\CHRONICLE}$ atoms.
        Then, the expectation~$\widetilde{\espe}$ is defined with respect to the probability~$\widetilde{\prbt}$ 
        as a finite sum over the uncertainty chronicles in~$\CHRONICLE$
        \begin{equation}\label{eq:empirical_probability_Expectation_multinode}
            \widetilde{\espe}\nc{\zeta(\va{\Uncertain})} = 
            \frac{1}{\cardinality{\CHRONICLE}} \sum_{c\in\CHRONICLE} 
                            \zeta\bp{{\np{\uncertain_{\openWclosed{\weekinf}}}^{\chronicle}, \dots,
                                    \np{\uncertain_{\openWclosed{\week}}}^{\chronicle}
                                    , \dots,
                                    \np{\uncertain_{\openWclosed{\weeksup}}}^{\chronicle}}}
            \eqfinp    
        \end{equation}
    \end{subequations}
In Remark~\ref{remark:WeeklyProbability_multinode}, we define another 
probability, $\widehat{\prbt}$, on the same sample space~$\np{\Omega,\trib}$. 
 \end{remark}

\begin{remark}\label{remark:WeeklyProbability_multinode} 
    We build on the sample space~$\np{\Omega,\trib}$ in~\eqref{eq:Omega}, 
   from the same set~$\CHRONICLE$ (or a subset of it)
   of uncertainty chronicles, another probability
 \begin{subequations}
   \begin{equation}
     \label{eq:weekly_probability_Prbt_multinode}
      \widehat{\prbt} = \bigotimes_{\week \in \WEEK} \frac{1}{\cardinality{\CHRONICLE}} 
      \sum_{{\chronicle}\in\CHRONICLE} 
      \delta_{\np{\uncertain_{\openWclosed{\week}}}^{\chronicle}}
      \eqfinv
  \end{equation}
  which, by construction, makes the coordinate mappings in Equation~\eqref{eq:uncertainty_process_multinode}
  (weekly)~independent,
  and that we call the \emph{empirical product probability}.

  The support of~$\widehat{\prbt}$ is made of $\cardinality{\CHRONICLE}^{\cardinal{\WEEK}}$ atoms
  (exponentially higher than the support of~$\widetilde{\prbt}$).
  This probability~$\widehat{\prbt}$ will only 
  serve to evaluate expectations, denoted~$\widehat{\espe}$, of functions depending on the weekly uncertainties, using the expression
  \begin{align}
    \label{eq:weekly_probability_Expectation_multinode}
        \widehat{\espe}\nc{\zeta(\va{\Uncertain}_{\openWclosed{\week}})} = 
          \frac{1}{\cardinality{\CHRONICLE}}  \sum_{c\in\CHRONICLE} 
                                \zeta\bp{\np{\uncertain_{\openWclosed{\week}}}^{\chronicle}}
        \eqfinp    
    \end{align} 
   \end{subequations}
\end{remark}

\paragraph{Information structure}
We consider a \emph{weekly hazard-decision ({HD})} information structure \cite{martinezparra:hal-04681616}.

In this structure, the recourse controls $\recoursecontrol_{\openWclosed{\week}}^{\node}$, and 
the flows decisions $\flownode_{\openWclosed{\week}}^{\node}$ and $\flowarc_{\openWclosed{\week}}^{\arc}$
are made knowing the uncertainties $\uncertain_{\openWclosed{\week}}$ for the current week~$\week$.
With the weekly notations in the variables definition in~\S\ref{subsubsection:Timescale_graph_and_variables_definition},
we write the information constraints~$\forall \week\in\WEEK$, 
\begin{subequations}\label{eq:InfoConstraints}
 \begin{align}
      \bsigmaf{\va{\RecourseControl}_{\openWclosed{\week}}^{\node}}
        &\subseteq 
           \bsigmaf{\va{\Uncertain}_{\winfhinf}, \va{\Uncertain}_{\openWclosed{\binf{\week}}}, 
                                                \cdots, \va{\Uncertain}_{\openWclosed{\week\predecessor}},
                                                \va{\Uncertain}_{\openWclosed{\week}}}
        \eqsepv \forall \week\in\WEEK
        \eqfinv \label{eq:Weekly_InfoConstraint_RecourseControl}
      \\
    \bsigmaf{\va{\FlowNode}_{\openWclosed{\week}}^{\node}}
        &\subseteq 
           \bsigmaf{\va{\Uncertain}_{\winfhinf}, \va{\Uncertain}_{\openWclosed{\binf{\week}}}, 
                                                \cdots, \va{\Uncertain}_{\openWclosed{\week\predecessor}},
                                                \va{\Uncertain}_{\openWclosed{\week}}}
        \eqsepv \forall \week\in\WEEK
       \eqfinv \label{eq:Weekly_InfoConstraint_FlowNode}
     \\
    \bsigmaf{\va{\FlowArc}_{\openWclosed{\week}}^{\arc}}\label{eq:Weekly_InfoConstraint_FlowArc2} 
        &\subseteq 
           \bsigmaf{\va{\Uncertain}_{\winfhinf}, \va{\Uncertain}_{\openWclosed{\binf{\week}}}, 
                                                \cdots, \va{\Uncertain}_{\openWclosed{\week\predecessor}},
                                                \va{\Uncertain}_{\openWclosed{\week}}}
        \eqsepv \forall \week\in\WEEK 
        \eqfinp
\end{align}
\end{subequations}

\subsection{Global optimisation problem formulation}
\label{subsection:Global_optimisation_problem_formulation}
In~\S\ref{subsubsection:Production_problems_at_each_node}
 and~\S\ref{subsubsection:Transport_cost_in_the_arcs}, we present respectively the nodal production
 problems and the transport cost in the arcs.
 The global optimisation problem is stated in~\S\ref{subsubsection:Global_optimisation_problem}
 as the sum of the nodal production problems and the transport cost in the arcs.

\subsubsection{Production problems at each node}
\label{subsubsection:Production_problems_at_each_node}
We present here the nodal production problems at each node $\node \in \NODE$.

 We define~$\va{\FlowNode}^{\node}  = \bp{ \va{\FlowNode}_{\openWclosed{\binf{\week}}}^{\node},
  \dots, \va{\FlowNode}_{\openWclosed{\bsup{\week}}}^{\node}}$,
 the node 
flow process at each $\node \in \NODE$ between weeks $\binf{\week}$ and $\bsup{\week}$,
and~$\stock_{\winfhinf}^{\node}$ the initial storage level at the node $\node$.
We call \emph{optimal nodal production cost} the expression
\begin{subequations}
    \label{eq:nodalProductionProblemHD}
 \begin{align}
   \NodalCost^{{\node}}\bp{  \stock_{\winfhinf}^{\node},  \va{\FlowNode}^{\node}}=
   \min  \mathbb{E} 
       & \Biggl[
            \sum_{\week\in\WEEK} 
                    \Bigl(
                     \InstantaneousCost_{\week}^{\node}\bigl(
                            \va{\Stock}_{\whinf}^{\node}, 
                           \va{\Uncertain}_{\openWclosed{\week}}^{\node},
                            \va{\RecourseControl}_{\openWclosed{\week}}^{\node}
                            \bigr)
+
                        \FinalCost^{\node}\bp{\va{\Stock}_{\whlast}^{\node}}
                    \Bigr)
                   \Biggr]
        \label{eq:GSFWeeklyHD-1}
\\
\text{s.t.} &
\nonumber
    \\
     &
    \quad 
        \va{\Stock}_{\winfhinf}^{\node} 
            = 
            \stock_{\winfhinf}^{\node}
                \eqfinv \label{eq:GSFWeeklyHD-InitialState} 
    \\
    & 
    \quad   
        \eqref{eq:WeeklyDynamicsNodal}
        \eqsepv 
        \eqref{eq:WeeklyNodalBalanceConstraint}
        \eqsepv
        \eqref{eq:StorageLevelBounds_weekly}
        \eqsepv
        \eqref{eq:RecourseControlBounds_weekly}
        \eqsepv 
        \eqref{eq:Weekly_InfoConstraint_RecourseControl}
        \eqsepv 
        \forall \week \in \WEEK
        \eqfinp
\end{align}
\end{subequations} 

\subsubsection{Transport cost in the arcs}
\label{subsubsection:Transport_cost_in_the_arcs}
We define the arc flow process~$\va{\FlowArc}^{\arc}  = \bp{ \va{\FlowArc}_{\openWclosed{\binf{\week}}}^{\arc}, \dots, \va{\FlowArc}_{\openWclosed{\bsup{\week}}}^{\arc}}$,
  at each $\arc \in \ARC$ between weeks $\binf{\week}$ and $\bsup{\week}$ and, 
the collection of all arc flow processes~$\va{\FlowArc} = \bseqp{\va{\FlowArc}^{\arc}}{\arc\in\ARC}$.
Then, the transport cost is defined as 
\begin{align} \label{eq:ArcTransportationCost}
    \ArcCost\bp{ \va{\FlowArc}} = 
  \mathbb{E} &
     \left[
        \sum_{\arc\in\ARC} \sum_{\week\in\WEEK} 
                    \InstantaneousCost_{\week}^{\arc}\bp{
                        \va{\FlowArc}_{\openWclosed{\week}}^{\arc}    
                    }
                 \right]\eqfinp
\end{align}

\subsubsection{Global optimisation problem}
\label{subsubsection:Global_optimisation_problem}
Recalling that   $\displaystyle \stock_{\winfhinf} = \bseqp{\stock_{\winfhinf}^{\node}}{\node\in\NODE} \in
\prod_{\node\in\NODE}\STOCK_{\winfhinf}^{\node} $, we write the global optimisation problem as
\begin{subequations}
    \label{eq:GlobalProblem}
\begin{align}
        \GlobalProblem\bp{\stock_{\winfhinf}} =& \min_{\va{\FlowNode}, \va{\FlowArc}} \sum_{\node\in\NODE} 
    \NodalCost^{\node}\bp{   \stock_{\winfhinf}^{\node},    \va{\FlowNode}^{\node}} + 
    \ArcCost\bp{ \va{\FlowArc}}
    \\
    \st \nonumber
  \\
  &
  \va{\FlowNode}
  -
  \NODEARCMATRIX \va{\FlowArc } = 0
  \eqfinv \label{eq:globalcoupling}
  \\
   & \eqref{eq:Weekly_InfoConstraint_FlowNode}
  \text{ } \forall \node\in\NODE \eqsepv \text{ and, } 
    \eqref{eq:Weekly_InfoConstraint_FlowArc2}
  \text{ }  \forall \arc\in\ARC
    \eqfinv \label{eq:globalinformationconstraint-arcflow}
\end{align}
\end{subequations}
where~$\NodalCost^{\node}\bp{   \stock_{\winfhinf}^{\node},    \va{\FlowNode}^{\node}}$
is the optimal nodal production cost
in Equation~\eqref{eq:nodalProductionProblemHD}
and~$\ArcCost\bp{ \va{\FlowArc}}$ is the transport cost defined in Equation~\eqref{eq:ArcTransportationCost}.
The global optimisation problem~\eqref{eq:GlobalProblem} combines, in its objective 
function, the nodal production problems and the transport problem along the arcs.
Equation~\eqref{eq:globalcoupling} couples the
nodal flows and the arc flows for all $\wh\successor \in \timeline$.

\subsection{Temporal decomposition (by dynamic programming)}
\label{subsection:Temporal_decomposition}

In~\S\ref{subsubsection:Dynamic_programming_equations_for_the_global_optimisation_problem},
we present the cost-to-go functions and the corresponding dynamic programming equations
for the global optimisation problem~\eqref{eq:GlobalProblem}.
In~\S\ref{subsubsection:Policies_and_simulation_algorithms}, we present the algorithms
for computing and simulating policies
for the global optimisation problem~\eqref{eq:GlobalProblem} in a multinode system
using temporal decomposition.

\subsubsection{Dynamic programming equations for the global optimisation problem}
\label{subsubsection:Dynamic_programming_equations_for_the_global_optimisation_problem}

At each stage (week~$\week$), we define the dynamic programming state~$\statew$  as 
the collection of initial nodal storage levels at the beginning of the week~$\stock_{\whinf}$,
that is,~$\statew = \nseqp{\stock^{\node}_{\whinf}}{\node\in\NODE} \in
\STATE_{\week} = \prod_{\node\in\NODE}\STOCK_{\whinf}^{\node}$.
Then, we define a sequence~$\bseqp{\nBellmanf{\week}{}}{\week\in\WEEK\cup \na{\weeklast}}$ 
of global cost-to-go functions~$ \nBellmanf{\week}{}:\STATE_{\week} \to \RR \cup \na{+\infty} $
that are given, for all~$\week\in\WEEK\cup \na{\weeklast}$ and for all~$\state\in\STATE_{\week}$, by
\begin{subequations}
    \label{eq:GlobalCostToGoHD}
    \begin{align}
     &   \nBellmanf{\week}{}\bp{\state} = \label{eq:GlobalCostToGoHD-1} 
     \min
        \EE\vast[ \sum_{\week'=\week}^{\bsup{\week}}
      \Biggl(  \sum_{\node\in\NODE}       \left(
                    \InstantaneousCost_{\week'}^{\node}\bp{
                            \va{\State}_{\week'}^{\node}, 
                            \va{\Uncertain}_{\openWclosed{\week'}}^{\node},
                            \va{\RecourseControl}_{\openWclosed{\week'}}^{\node}}
                        +
                        \FinalCost^{\node}\bp{\va{\State}_{\weeklast}^{\node}}
                    \right)
                    +
                    \sum_{\arc\in\ARC}    
                    \InstantaneousCost_{\week'}^{\arc}\bp{
                        \va{\FlowArc}_{\openWclosed{\week'}}^{\arc}    
                    }
               \Biggr)
        \vast] 
        \\
       & \st \nonumber
          \\
    & 
    \va{\State}_{\week}            = 
            \state
                \eqfinv \label{eq:GlobalCostToGoHD-2}
    \\
    &
    \text{and, for } \week'\in \ic{\week,\bsup{\week}}\eqfinv \nonumber
    \\
    &
        \va{\State}_{{\week'}\successor}^{\node} 
            = 
            \dynamics_{\week'}^{\node}\Bp{
                \va{\State}_{\week'}^{\node},
                \va{\Uncertain}_{\openWclosed{\week'}}^{\node},
                \va{\RecourseControl}_{\openWclosed{\week'}}^{\node}
            }
            \eqsepv \forall \node\in\NODE \eqfinv  \label{eq:GlobalCostToGoHD-3}
    \\
     & 
     \nodeBalance_{\openWclosed{\week'}}^{\node}\bp{
                    \va{\Uncertain}_{\openWclosed{\week'}}^{\node},
                    \va{\RecourseControl}_{\openWclosed{\week'}}^{\node}
     }  = \va{\FlowNode}_{\openWclosed{\week'}}^{\node}
            \eqsepv \forall \node\in\NODE 
            \eqfinv \label{eq:GlobalCostToGoHD-4}
      \\
     & 
        \bsigmaf{\va{\RecourseControl}_{\openWclosed{\week'}}^{\node}}
        \subseteq 
      \bsigmaf{\va{\Uncertain}_{\openWclosed{{\week}}}, \cdots, 
                                 \va{\Uncertain}_{\openWclosed{{\week'}\predecessor}},  
                                 \va{\Uncertain}_{\openWclosed{\week'}}}
            \eqsepv \forall \node\in\NODE \eqfinv \label{eq:GlobalCostToGoHD-6}
     \\ 
        & 
        \bsigmaf{\va{\FlowNode}_{\openWclosed{\week'}}^{\node}}
        \subseteq 
        \bsigmaf{\va{\Uncertain}_{\openWclosed{{\week}}}, \cdots, 
                                 \va{\Uncertain}_{\openWclosed{{\week'}\predecessor}},  
                                 \va{\Uncertain}_{\openWclosed{\week'}}}
            \eqsepv \forall \node\in\NODE \eqfinv \label{eq:GlobalCostToGoHD-7}
                    \\
        & 
        \bsigmaf{\va{\FlowArc}_{\openWclosed{\week'}}^{\arc}}
        \subseteq  \bsigmaf{\va{\Uncertain}_{\openWclosed{{\week}}}, \cdots, 
                                 \va{\Uncertain}_{\openWclosed{{\week'}\predecessor}},  
                                 \va{\Uncertain}_{\openWclosed{\week'}}} 
            \eqsepv \forall \arc\in\ARC \eqfinv \label{eq:GlobalCostToGoHD-8}
    \\
    &
      \eqref{eq:StorageLevelBounds_weekly} \eqsepv 
       \eqref{eq:RecourseControlBounds_weekly}
          \eqsepv \forall \node \in \NODE \label{eq:GlobalCostToGoHD-9}
            \eqfinv \\
    &\va{\FlowNode}_{\openWclosed{\week'}}  - \NODEARCMATRIX \va{\FlowArc }_{\openWclosed{\week'}} = 0
    \eqfinp \label{eq:GlobalCostToGoHD-10}
    \end{align}
\end{subequations}

\begin{assumption}
  \label{assumption:weekly_independence}
  The measurable space~$\np{\Omega, \trib}$ in~\eqref{eq:Omega} is equipped with a probability~$\prbt$
  such that     the sequence 
    $\bp{\va{\Uncertain}_{\openWclosed{\binf{\week}}}, 
      \cdots, \va{\Uncertain}_{\openWclosed{\week}}, \cdots, \va{\Uncertain}_{\openWclosed{\bsup{\week}}}}$
    of random variables is weekly independent, i.e., 
    \begin{equation}
        \prbt_{\np{\va{\Uncertain}_{\winfhinf}, \cdots, \va{\Uncertain}_{\whlast}}} =
        \prbt_{\np{\va{\Uncertain}_{\winfhinf}}}
        \otimes
        \bigotimes_{\week \in \WEEK} 
        \prbt_{\va{\Uncertain}_{\openWclosed{\week}}}
        \eqfinp
      \end{equation}
\end{assumption}
For instance, the probability~\( \widehat{\prbt} \)
in~\eqref{eq:weekly_probability_Prbt_multinode} satisfies Assumption~\ref{assumption:weekly_independence}.

Under Assumption~\ref{assumption:weekly_independence}, 
the global cost-to-go functions~$\bseqp{\nBellmanf{\week}{}}{\week\in\WEEK\cup\na{\weeklast}}$ 
satisfy the following dynamic programming equations
\begin{subequations}
    \label{eq:GlobalDP_HD}
    \begin{align}
        &
        \nBellmanf{\weeklast}{}\np{\state} = 
        \sum_{\node\in\NODE} \FinalCost^{\node}\bp{\state^{\node}} \eqfinv \label{eq:GlobalDP_HD-1}
        \\
        \begin{split} 
         &   \nBellmanf{\week}{}\np{\state} =    \EE\vast[    
             \min
             \sum_{\node\in\NODE}       
                    \InstantaneousCost_{\week}^{\node}\bp{
                           \state^{\node}, 
                                                            \va{\Uncertain}_{\openWclosed{\week}}^{\node},
                            \recoursecontrol_{\openWclosed{\week}}^{\node}} 
               +
                \sum_{\arc\in\ARC}   
                    \InstantaneousCost_{\week}^{\arc}\bp{
                        \flowarc_{\openWclosed{\week}}^{\arc}    
                    }
               \\
               &
          \qquad\qquad    \qquad    \qquad    \qquad  
             +
               \nBellmanf{\week\successor}{}\Bp{\bseqp{ \dynamics_{\week}^{\node}\np{
                                                            \state^{\node},
                                                            \va{\Uncertain}_{\openWclosed{\week}}^{\node},
                                                            \recoursecontrol_{\openWclosed{\week}}^{\node}
                                                        }}{\node\in\NODE}}               
                    \vast] \eqfinv
            \end{split}\label{eq:GlobalDP_HD-2}
         \\
       & \text{subject to:}\nonumber \\
       &
       \qquad \va{\FlowNode}_{\openWclosed{\week}}  - \NODEARCMATRIX \va{\FlowArc}_{\openWclosed{\week}} = 0\eqsepv
   \label{eq:GlobalDP_HD-4}  \text{and, }   
      \eqref{eq:WeeklyNodalBalanceConstraint} \eqsepv 
       \eqref{eq:StorageLevelBounds_weekly} \eqsepv 
       \eqref{eq:RecourseControlBounds_weekly}
        \eqsepv
         \forall \node\in\NODE 
            \eqfinp   
    \end{align}
\end{subequations}

As is well known (\cite{Bellman:1957, Bertsekas-Shreve:1996, Carpentier-Chancelier-DeLara-Martin-Rigaut:2023}),
 if the sequence $\bp{\va{\Uncertain}_{\openWclosed{\binf{\week}}}, 
\cdots, \va{\Uncertain}_{\openWclosed{\week}}, \cdots, \va{\Uncertain}_{\openWclosed{\bsup{\week}}}}$
of uncertainties is weekly independent as in Assumption~\ref{assumption:weekly_independence},
 then
the initial global cost-to-go function~$\nBellmanf{\binf{\week}}{}$ 
provides an optimal solution for 
the problem~\eqref{eq:GlobalProblem}:
\begin{align}
    \label{eq:GlobalProblemOptimalityCostToGo}
    \GlobalProblem\bp{\stock_{\winfhinf}} = \nBellmanf{\binf{\week}}{}\bp{\stock_{\winfhinf}} \eqfinp
\end{align}

The cost-to-go functions
$\nBellmanf{\week}{}$ can be computed recursively through 
dynamic programming equations~\eqref{eq:GlobalDP_HD}.

\subsubsection{Policies computation and simulation algorithms}
\label{subsubsection:Policies_and_simulation_algorithms}
In this~\S\ref{subsubsection:Policies_and_simulation_algorithms}, we present the algorithms
for computing and simulating the hazard-decision policies
for the global optimisation problem~\eqref{eq:GlobalProblem} in a multinode system.
 
Given an uncertainty chronicle~$\np{\uncertain}^\chronicle$ as defined in
Remark~\ref{remark:empirical_probability_multinode},
for any policy we compute the controls~$ \bp{
                    \np{\recoursecontrol}^{\chronicle},
                    \np{\flownode}^{\chronicle},
                    \np{\flowarc}^{\chronicle}}$,
where 
$ \np{\recoursecontrol}^\chronicle = \bp{\np{{\recoursecontrol_{\openWclosed{\weekinf}}}}^{\chronicle},
                                    \dots, 
                                   \bp{{\recoursecontrol_{\openWclosed{\week}}}}^{\chronicle}, 
                                    \dots,
                                    \bp{{\recoursecontrol_{\openWclosed{\weeksup}}}}^{\chronicle}
}$, with, $\np{{\recoursecontrol_{\openWclosed{\week}}}}^{\chronicle} 
                                = \bp{\nseqp{\recoursecontrol_{\openWclosed{\week}}^{\node}}{\node\in\NODE}}^{\chronicle} $,
$\np{\flownode}^\chronicle = \bp{\np{{\flownode_{\openWclosed{\weekinf}}}}^{\chronicle},
                                    \dots, 
                                   \bp{{\flownode_{\openWclosed{\week}}}}^{\chronicle}, 
                                    \dots,
                                    \bp{{\flownode_{\openWclosed{\weeksup}}}}^{\chronicle}
}$, with, $  \np{{\flownode_{\openWclosed{\week}}}}^{\chronicle} 
                                = \bp{\nseqp{\flownode_{\openWclosed{\week}}^{\node}}{\node\in\NODE}}^{\chronicle} $,
and,
$\np{\flowarc}^\chronicle = \bp{\np{{\flowarc_{\openWclosed{\weekinf}}}}^{\chronicle},
                                    \dots, 
                                   \bp{{\flowarc_{\openWclosed{\week}}}}^{\chronicle}, 
                                    \dots,
                                    \bp{{\flowarc_{\openWclosed{\weeksup}}}}^{\chronicle}
}$, with, $\np{{\flowarc_{\openWclosed{\week}}}}^{\chronicle} 
                                 \bp{\nseqp{\flowarc_{\openWclosed{\week}}^{\arc}}{\arc\in\ARC}}^{\chronicle} $.
By applying the controls~$\bp{
                    \np{\recoursecontrol}^{\chronicle},
                    \np{\flownode}^{\chronicle},
                    \np{\flowarc}^{\chronicle}}$, 
                    we obtain the state trajectory~$\np{\state}^{\chronicle}= \bp{\np{{\state_{{\weekinf}}}}^{\chronicle},
                                    \dots, 
                                   \bp{{\state_{{\week}}}}^{\chronicle}, 
                                    \dots,
                                    \bp{{\state_{{\weeksup}}}}^{\chronicle}
}$, where $\np{{\state_{{\week}}}}^{\chronicle} 
                                = \bp{\nseqp{\state_{{\week}}^{\node}}{\node\in\NODE}}^{\chronicle} \eqsepv
\forall\week\in \WEEK$.

The sequential {HD} policy computation and simulation algorithm is presented in Algorithm~\ref{alg:SimulationHD_multinode},
where~$\bseqp{\textit{cost-to-go}_{\week}}{\week\in\WEEK\cup\na{\weeklast}}$ is a sequence of global cost-to-go functions.
\begin{algorithm}[!ht]
  \caption{\small Policy and simulation in a multinode system}\label{alg:SimulationHD_multinode}
    \KwData{
          cost-to-go functions~$\bseqp{\textit{cost-to-go}_{\week}}{\week\in\WEEK\cup\na{\weeklast}}$,\\
          \hspace{1.5cm}simulation chronicle $\np{\uncertain}^{\chronicle}$,\\
          \hspace{1.5cm}initial condition for the state $\bseqp{\state_{0}^\node}{\node\in\NODE}$}
   \KwResult{\(\np{\state}^{\chronicle}, 
                    \np{\recoursecontrol}^{\chronicle},
                    \np{\flownode}^{\chronicle},
                    \np{\flowarc}^{\chronicle}\)}
   $\np{\state_{\binf{\week}}^{\node}}^{\chronicle} = \state_{0}^{\node}\eqsepv \forall \node\in\NODE\eqfinv$\\
   \For{$\week = \binf{\week},  \dots, \bsup {\week}$}{
     \texttt{compute controls:}\\
      \(\displaystyle 
        \Bp{
        \np{\recoursecontrol_{\openWclosed{\week}}}^{\chronicle},
        \np{\flownode_{\openWclosed{\week}}}^{\chronicle},
        \np{\flowarc_{\openWclosed{\week}}}^{\chronicle}}
             \in    \argmin_{\substack{
                                 \recoursecontrol_{\openWclosed{\week}}\\
                             \flownode_{\openWclosed{\week}}, \flowarc_{\openWclosed{\week}}}}
             \Biggl\{
                   \sum_{\node\in\NODE}       
                    \InstantaneousCost_{\week}^{\node}\bp{
                         \np{\state_{\week}^{\node}}^{\chronicle}, 
                            \np{\uncertain_{\openWclosed{\week}}^{\node}}^{\chronicle},
                            \recoursecontrol_{\openWclosed{\week}}^{\node}} 
         \)
        \\
        \(\displaystyle
         \qquad   \qquad   \qquad  \qquad    \qquad 
               + 
                \sum_{\arc\in\ARC}   
                    \InstantaneousCost_{\week}^{\arc}\bp{
                        \flowarc_{\openWclosed{\week}}^{\arc}    
                    }
               + 
              \textit{cost-to-go}_{\week\successor}\Bp{\bseqp{ \dynamics_{\week}^{\node}\np{
                                                            \np{\state_{\week}^{\node}}^{\chronicle}, 
                            \np{\uncertain_{\openWclosed{\week}}^{\node}}^{\chronicle},
                            \recoursecontrol_{\openWclosed{\week}}^{\node}
                                                        }}{\node\in\NODE}}  \eqsepv  \text{subject to: }   \)
        \\
        \(\displaystyle   {\flownode}_{\openWclosed{\week}}  - \NODEARCMATRIX \va{\flowarc }_{\openWclosed{\week}} = 0
        \eqsepv
        \text{and, }
            \eqref{eq:WeeklyNodalBalanceConstraint} \eqsepv 
                            \eqref{eq:StorageLevelBounds_weekly} \eqsepv
                             \eqref{eq:RecourseControlBounds_weekly}
       \eqsepv  \forall \node\in\NODE
         \eqfinp
         \Biggr\} 
          \) 
       \\
          \texttt{update state:}\\
    \(\np{\state_{\week\successor}^{\node}}^{\chronicle} = 
    \dynamics_{\week}^{\node}\bp{
                                                            \np{\state_{\week}^{\node}}^{\chronicle}, 
                            \np{\uncertain_{\openWclosed{\week}}^{\node}}^{\chronicle},
                              \np{\recoursecontrol_{\openWclosed{\week}}^{\node}}^{\chronicle}
    }\eqsepv \forall\node\in\NODE\)
       }
\end{algorithm}
Note that the policy computation and simulation Algorithm~\ref{alg:SimulationHD_multinode}
is based on the dynamic programming temporal decomposition but, 
it can be applied {(as a heuristics, without any claim of optimality)} using  any sequence of global cost-to-go
functions~$\bseqp{\textit{cost-to-go}_{\week}}{\week\in\WEEK\cup\na{\weeklast}}$.
{In particular, using the global cost-to-go functions computed by solving
the dynamic programming equations~\eqref{eq:GlobalDP_HD}, under
Assumption~\ref{assumption:weekly_independence},
the policy computed by Algorithm~\ref{alg:SimulationHD_multinode} is optimal for
the global optimisation problem~\eqref{eq:GlobalProblem}.}

\subsubsection{Motivation for spatial decomposition in the context of prospective studies}
\label{subsubsection:Motivation_for_spatial_decomposition_in_the_context_of_prospective_studies}

In problems with a small number of nodes (few storage facilities), 
the cost-to-go functions can be computed recursively
using Stochastic Dynamic Programming (SDP) or Stochastic Dual Dynamic Programming (SDDP), 
and these functions can be used to design policies for simulation. 
However, in large-scale problems,
the curse of dimensionality makes this recursion computationally challenging.

Due to the complexity of the global optimisation problem~\eqref{eq:GlobalProblem}, 
we present in the forthcoming Sect.~\ref{section:Mixing_spatial_and_temporal_decomposition}
a spatial decomposition approach to tackle the curse of dimensionality
by relaxing the spatial coupling constraint~\eqref{eq:globalcoupling}
 in the global optimisation problem~\eqref{eq:GlobalProblem}.
This relaxation allows us to set lower bounds on the global optimisation problem as well as
on the cost-to-go functions~$\bseqp{\nBellmanf{\week}{}}{\week\in\WEEK\cup\na{\weeklast}}$.

Spatial decomposition is particularly relevant in the context of prospective studies because it limits the coordination between 
different market zones to the local storage levels, while accounting for interactions with other nodes 
through decomposition prices (that represent marginal costs). 
This structure enables the use of Stochastic Dynamic Programming (SDP) 
independently at each node
--- avoiding the curse of dimensionality as each node is made of a small number of storages ---
and allowing the computation of cost-to-go functions on regular grids.
In addition, it results in univariate cost-to-go functions that are easier to handle and evaluate during simulation phases.
It also opens the possibility of incorporating nonlinearities in local cost functions,
a topic which remains to be studied in the future.

\section{Mixing spatial and temporal decomposition}
\label{section:Mixing_spatial_and_temporal_decomposition}

When dealing with large-scale, multistage stochastic optimization problems, 
it is well known that classical stochastic dynamic programming approaches 
face the curse of dimensionality~\cite{Bellman:1957}.
One alternative to tackle this issue is to use Stochastic Dual Dynamic Programming (SDDP)~\cite{Pereira-Pinto:1991},
but even this approach can suffer from the curse of dimensionality when the number of storages is large.

Given that we are addressing a so-called weekly-coupled dynamic optimization problem (as defined in the PhD thesis~\cite{Hawkins:2003}),
 a decomposition strategy is a choice of particular interest. The spatial decomposition approach, known as 
 \emph{Dual Approximate Dynamic Programming}  (DADP)~\cite{pacaud2021distributed}, consists of using a 
 Lagrangian relaxation of the spatial coupling constraint in~\eqref{eq:globalcoupling}, to get a lower bound of the 
 global optimisation problem~\eqref{eq:GlobalProblem}. When relaxing the coupling constraint, we introduce a Lagrangian 
 multiplier~$\price$, the \emph{price decomposition process}, that we consider in the deterministic class
 (hence not looking for a random optimal multiplier). 
 Once the coupling constraint is relaxed, it is possible to solve the nodal production problems and the transport 
 problems in the arcs independently, using Stochastic Dynamic Programming (SDP) at each node, when $\price$ 
 is a deterministic vector.

In~\S\ref{subsection:Spatial_decomposition_by_prices}, 
we introduce the spatial decomposition by prices, including 
the temporal decomposition of the nodal production problems.
In~\S\ref{subsection:Relation_between_nodal_and_global_cost_to_go_functions},
    we present the relation between the nodal cost-to-go functions and the global cost-to-go functions.
In~\S\ref{subsection:Designing_global_policies_from_the_nodal_cost_to_go_functions},
we show how the univariate nodal usage values 
from {DADP} are used to design global policies.
In~\S\ref{subsection:Challenges_in_improving_the_price_decomposition_process}, 
we discuss the challenges in improving the price decomposition process,
including the non-smoothness of the nodal price value functions
and the large size of the price decomposition process.

\subsection{Spatial decomposition by prices}
\label{subsection:Spatial_decomposition_by_prices}

In~\S\ref{subsubsection:Lagrangian_relaxation_of_the_spatial_coupling_constraint}, 
we introduce the price decomposition process and
the Lagrangian relaxation of the spatial coupling constraint~\eqref{eq:globalcoupling}.
In~\S\ref{subsubsection:Temporal_decomposition_of_nodal_production_problems}, 
we present the temporal decomposition of the nodal production problems.

\subsubsection{Lagrangian relaxation of the spatial coupling constraint}
\label{subsubsection:Lagrangian_relaxation_of_the_spatial_coupling_constraint}

We start by relaxing the spatial coupling constraint~\eqref{eq:globalcoupling} 
in the global optimisation problem~\eqref{eq:GlobalProblem}.
For this purpose, we introduce the inner product~$\proscal{\cdot}{\cdot}$ on
\( \RR^{\cardinal{\WEEK} \times \cardinal{\HOUR} \times \cardinal{\NODE}} \),
and a Lagrangian multiplier~$\price$,
 which we call \emph{price decomposition process}
\begin{align}
    \label{eq:PriceDecompositionProcess}
    {\price}= \nseqp{\price^{\node}}{\node \in \NODE} \in  \RR^{\cardinal{\WEEK} \times \cardinal{\HOUR} \times \cardinal{\NODE}  }
    \eqfinp
\end{align}
Here, the vector~$\price$ is not indexed by scenarios, hence is deterministic.  We choose the price decomposition process 
$\price$ from the deterministic class, as in~\cite{pacaud2021distributed}, to enable the use of {SDP} with local state 
variables at each node.
We define the \emph{global price value function} $\ValueInf\nc{\price}: \STOCK_{\winfhinf} \to \RR \cup \na{+\infty}$
for all $\stock_{\winfhinf} \in \nseqp{\STOCK_{\winfhinf}}{\node\in\NODE}$ as
\begin{subequations}
\label{eq:GlobalPriceValueFunction-1}
 \begin{align}
    \ValueInf\nc{{\price}}\bp{\stock_{\winfhinf}}
    = \min_{\va{\FlowNode}, \va{\FlowArc}}
    &\sum_{\node\in\NODE}\NodalCost^{\node}\bp{\stock_{\winfhinf}^{\node},\va{\FlowNode}^{\node}}
    + \ArcCost\bp{\va{\FlowArc}} + \EE \bc{\proscal{{\price}}{\va{\FlowNode} - \NODEARCMATRIX \va{\FlowArc} }}
    \\
    & \st \quad
     \eqref{eq:Weekly_InfoConstraint_FlowNode} \text{ } \forall \node\in\NODE \eqsepv \text{ and, } 
        \eqref{eq:Weekly_InfoConstraint_FlowArc2}  \text{ }  \forall \arc\in\ARC    \eqfinp
 \end{align}
\end{subequations}
For any value of the price decomposition process~$\price$, 
the Lagrangian relaxation gives a lower bound of the global optimisation problem~\eqref{eq:GlobalProblem}.

Working on the expression~\eqref{eq:GlobalPriceValueFunction-1}, we can separate the nodal 
problems and the transport problem in the arcs, since the coupling constraint is now in the objective function
as follows
\begin{subequations}
\label{eq:GlobalPriceValueFunction-2}
 \begin{align}
    &\ValueInf\nc{{\price}}\bp{\stock_{\winfhinf}}
    = \min_{\va{\FlowNode}}
    \sum_{\node\in\NODE}\NodalCost^{\node}\bp{\stock_{\winfhinf}^{\node},\va{\FlowNode}^{\node}}
     +  \EE \bc{\proscal{{\price}}{\va{\FlowNode} }}
   \\
   \nonumber
   &\qquad\qquad\qquad
    + \min_{\va{\FlowArc}}\ArcCost\bp{\va{\FlowArc}} + \EE \bc{\proscal{{\price}}{ - \NODEARCMATRIX \va{\FlowArc} }}
    \\
       &\qquad\qquad\qquad 
        \st \quad
     \eqref{eq:Weekly_InfoConstraint_FlowNode} \text{ } \forall \node\in\NODE \eqsepv \text{ and, } 
        \eqref{eq:Weekly_InfoConstraint_FlowArc2}  \text{ }  \forall \arc\in\ARC    \eqfinp
 \end{align}
\end{subequations}
Now, we rewrite the global price value function~\eqref{eq:GlobalPriceValueFunction-1}--\eqref{eq:GlobalPriceValueFunction-2}
as the sum of nodal price value functions and an arc price value function
\begin{align}
    \label{eq:GlobalPriceValueFunction-3}
     \ValueInf\nc{{\price}}\bp{\stock_{\winfhinf}} 
    = \sum_{\node\in\NODE} \Value^{\node}\bc{{\price}^{\node}}
    \bp{\stock_{\winfhinf}^{\node}}
    +
    \Value^{\ARC}\bc{\price} \eqfinv
\end{align}
where~$\Value^{\node}\bc{{\price}^{\node}}: \STOCK_{\winfhinf}^{\node}\to \RR \cup \na{\infty}$,
for $\node\in\NODE$, 
is a family of \emph{nodal price value functions}, each defined as
    \begin{align}      \label{eq:NodalPriceValueFunction}
 \Value^{\node}\nc{{\price}^{\node}}\bp{\stock_{\winfhinf}^{\node}} = 
    & \min_{\va{\FlowNode}^{\node}} \NodalCost^{\node} \bp{\va{\FlowNode}^{\node}, \stock_{\winfhinf}^{\node} }
    +
    \EE\bc{\proscal{{\price}^{\node}}{\va{\FlowNode}^{\node}}} 
    \eqsepv
   \st   \eqref{eq:Weekly_InfoConstraint_FlowNode}
                                          \eqfinv
\end{align}
and the \emph{arc price value function} is defined as
\begin{align}    \label{eq:ArcPriceValueFunction}
    \Value^{\ARC}\bc{{\price}} =
    & \min_{\va{\FlowArc}} \ArcCost\bp{\va{\FlowArc}} 
    -
    \EE\bc{\proscal{\NODEARCMATRIX\transp {\price}}{\va{\FlowArc}}} \eqsepv
      \st \text{ }   \eqref{eq:Weekly_InfoConstraint_FlowArc2}  \text{ }  \forall \arc\in\ARC   \eqfinp
\end{align}
Using the definition of \(\ArcCost(\va{\FlowArc})\), we obtain that
\begin{subequations}\label{eq:ArcPriceValueFunction2}
    \begin{align}
         \Value^{\ARC}\bc{{\price}}
      &
        =\min_{\va{\FlowArc}} 
     \mathbb{E} 
     \left[
        \sum_{\arc\in\ARC} \sum_{\week\in\WEEK} 
                    \InstantaneousCost_{\week}^{\arc}\bp{
                        \va{\FlowArc}_{\openWclosed{\week}}^{\arc}    
                    }
        -
      \proscal{\NODEARCMATRIX\transp {\price}}{\va{\FlowArc}} \right]
      \eqsepv
        \st \text{ }    \eqref{eq:Weekly_InfoConstraint_FlowArc2}  \text{ }  \forall \arc\in\ARC   \eqfinv
    \\
  &
            = \min_{\flowarc} 
        \left[
        \sum_{\arc\in\ARC} \sum_{\week\in\WEEK} 
                    \InstantaneousCost_{\week}^{\arc}\bp{
                        \flowarc_{\openWclosed{\week}}^{\arc}    
                    }
  -  \proscal{\NODEARCMATRIX\transp {\price}}{\flowarc} \right]\eqfinv  \label{eq:ArcPriceValueFunction3}
    \end{align}
\end{subequations}
where the last equality is easily obtained because a deterministic~$\flowarc$
satisfies the information constraint~\eqref{eq:Weekly_InfoConstraint_FlowArc2}.

For a given price decomposition process~$\price$,
we can solve independently the nodal price value functions~\eqref{eq:NodalPriceValueFunction}
 for each node $\node\in\NODE$ 
and the arc price value function~\eqref{eq:ArcPriceValueFunction} and obtain
a lower bound of the global optimisation problem~\eqref{eq:GlobalProblem} as follows
\begin{equation}
    \label{eq:GlobalProblem_bound}
    \sum_{\node\in\NODE} \Value^{\node}\bc{{\price}^{\node}}
    \bp{\stock_{\winfhinf}^{\node}}
    +
    \Value^{\ARC}\bc{\price} \leq \GlobalProblem\bp{\stock_{\winfhinf}} \eqfinp
\end{equation}

As the nodal price value functions~\eqref{eq:NodalPriceValueFunction} 
depend only on the initial nodal storage level for a given price~$\price$, 
we can perform a temporal decomposition of the nodal price production problems
and obtain  univariate nodal usage values. The procedure to achieve this
is presented in~\S\ref{subsubsection:Temporal_decomposition_of_nodal_production_problems}. 

\subsubsection{Temporal decomposition of nodal production problems}
\label{subsubsection:Temporal_decomposition_of_nodal_production_problems}

In this~\S\ref{subsubsection:Temporal_decomposition_of_nodal_production_problems},
 we present the temporal decomposition of the nodal price value 
function~\eqref{eq:NodalPriceValueFunction}.
The value of the nodal price value function~\eqref{eq:NodalPriceValueFunction} 
is computed through a temporal decomposition of the nodal production problem
with the coupling constraint dualized in the objective function.

It is natural to ask why the price decomposition process~$\price$ is chosen to be deterministic. 
The reason is that not every choice of~$\price$ is compatible with temporal decomposition, 
as dynamic programming requires independence of the underlying noise process. 
In the PhD thesis~\cite{pacaud:tel-02134163}, the author studies two spatial decomposition schemes 
by prices, that are compatible with time decomposition:
the case of \emph{deterministic prices} and the case of \emph{Markovian prices}.
In the PhD thesis~\cite{Hawkins:2003} and the article~\cite{BrownIndexPolicies2020}, 
the authors also consider the case of deterministic multipliers for their Lagrangian relaxations.
In this work, we choose to use a deterministic price decomposition process 
to achieve temporal decomposition of the nodal price value functions,
with the nodal original state,
and obtain lower bounds for the global problem cost-go-functions defined in 
equations~\eqref{eq:GlobalCostToGoHD}.

We define the weekly price decomposition process for the node $\node$ as
\begin{equation}
\price_{\openWclosed{\week}}^{\node} = \nseqp{\price_{\wh\successor}^{\node}}{\hour\in\HOUR} 
\in \RR^{\cardinal{\HOUR}} \text{ with }
\price^{\node} = \nseqp{\price_{\openWclosed{\week}}^{\node}}{\week\in\WEEK} 
    \in \RR^{\cardinal{\WEEK} \times \cardinal{\HOUR}}
    \eqfinp   
\end{equation}
Using the definition for the optimal nodal production cost in Equations~\eqref{eq:nodalProductionProblemHD},
and the definition for the nodal price value function in Equation~\eqref{eq:NodalPriceValueFunction},
we detail the expression of the nodal price value function as
\begin{subequations}
\label{eq:NodalPriceValueFunctionDetailHD}
\begin{align}   
    \label{eq:NodalPriceValueFunctionDetailHD-1}
   \begin{split}
     \Value^{{\node}}\nc{{\price}^{\node}}\bp{\stock_{\winfhinf}^{\node}}  = 
    \min
             \mathbb{E} & \Biggl[
        \sum_{\week\in\WEEK} 
            \Bigl(
                \InstantaneousCost_{\week}^{\node}\bp{
                    \va{\Stock}_{\whinf}^{\node}, 
                    \va{\Uncertain}_{\openWclosed{\week}}^{\node},
                    \va{\RecourseControl}_{\openWclosed{\week}}^{\node}
                } 
        +
            \proscal{\price_{\openWclosed{\week}}^{\node}}{\va{\FlowNode}^{\node}_{\openWclosed{\week}}}
            \Bigr)   
                \\ &  
       \qquad \qquad \qquad \qquad \qquad \quad\quad\quad\quad
            +\FinalCost^{\node}\bp{\va{\Stock}_{\whlast}^{\node}}
          \Biggr] \end{split}
    \\ 
          \st  &\quad 
        \eqref{eq:GSFWeeklyHD-InitialState} \eqsepv 
       \\ 
       &  \quad \eqref{eq:WeeklyDynamicsNodal} \eqsepv
        \eqref{eq:WeeklyNodalBalanceConstraint}  \eqsepv
        \eqref{eq:StorageLevelBounds_weekly} \eqsepv 
        \eqref{eq:RecourseControlBounds_weekly}\eqsepv
        \eqref{eq:Weekly_InfoConstraint_RecourseControl} \eqsepv
        \eqref{eq:Weekly_InfoConstraint_FlowNode} \eqsepv
        \forall \week \in \WEEK \eqfinp
            \label{eq:NodalPriceValueFunctionDetailHD-8}
    \end{align}
\end{subequations}
The nodal price value function~\eqref{eq:NodalPriceValueFunctionDetailHD} 
is a multistage stochastic optimisation problem.
Therefore, we write dynamic programming equations 
by taking as the stage cost the sum of the instantaneous cost~$\InstantaneousCost_{\week}^{\node}$
and $\price^{\node}_{\openWclosed{\week}}\flownode^{\node}_{\openWclosed{\week}}$.

We define the dynamic programming state~$\state_{\week}^{\node}$ as 
the nodal storage level~${\stock}_{\whinf}^{\node}$ at the beginning of the week~$\week$ and
a sequence  
$\bseqp{\nBellmanf{\week}{{\node}}\nc{\price^{\node}}}{\week \in\WEEK \cup  \na{\weeklast}}$
of \emph{nodal price cost-to-go functions}~$
 \nBellmanf{\week}{{\node}}\nc{\price^{\node}}: \STATE_{\week}^{\node}\to \RR \cup \na{+\infty}
$, 
given by:
\begin{subequations}\label{eq:NodalCostToGoHD}
    \begin{align}\label{eq:NodalCostToGoHD-1}
     \nBellmanf{\week}{{\node}}\nc{{\price}^{\node}}\bp{\state^{\node}} & = 
    \min               \mathbb{E} \Biggl[
        \sum_{\week'=\week}^{\bsup{\week}} 
            \Bigl(
                \InstantaneousCost_{\week'}^{\node}\bp{
                    \va{\State}_{\week'}^{\node}, 
                    \va{\Uncertain}_{\openWclosed{\week'}}^{\node},
                    \va{\RecourseControl}_{\openWclosed{\week'}}^{\node}
                } 
                       +
            \proscal{\price_{\openWclosed{\week'}}^{\node}}{\va{\FlowNode}^{\node}_{\openWclosed{\week'}}}
            \Bigr)   
           +\FinalCost^{\node}\bp{\va{\State}_{\weeklast}^{\node}}
          \Biggr]
    \\ 
     \st \eqsepv   & \forall \week'\in\ic{\week, \weeksup} \eqfinv
        \nonumber
    \\
    & 
        \va{\State}_{\week}^{\node} 
            = 
           \state^{\node}
                \eqfinv
               \label{eq:NodalCostToGoHD-2}
    \\
    &
        \va{\State}_{{\week'}\successor}^{\node} 
            = 
            \dynamics_{\week'}^{\node}\Bp{
                \va{\State}_{\week'}^{\node},
                \va{\Uncertain}_{\openWclosed{\week'}}^{\node},
                \va{\RecourseControl}_{\openWclosed{\week'}}^{\node}
            }
        \eqfinv 
        \label{eq:NodalCostToGoHD-3}
          \\
     & 
        \bsigmaf{\va{\RecourseControl}_{\openWclosed{\week'}}^{\node}}
        \subseteq 
        \bsigmaf{\va{\Uncertain}_{\openWclosed{{\week}}}, \cdots, 
        \va{\Uncertain}_{\openWclosed{{\week'}\predecessor}},  \va{\Uncertain}_{\openWclosed{\week'}}}
                 \eqfinv 
                 \label{eq:NodalCostToGoHD-6}
        \\ 
        & 
        \bsigmaf{\va{\FlowNode}_{\openWclosed{\week'}}^{\node}}
        \subseteq 
        \bsigmaf{\va{\Uncertain}_{\openWclosed{{\week}}}, \cdots, 
        \va{\Uncertain}_{\openWclosed{{\week'}\predecessor}},  \va{\Uncertain}_{\openWclosed{\week'}}}
                 \eqfinv 
                 \label{eq:NodalCostToGoHD-7}
        \\
    &
      \eqref{eq:StorageLevelBounds_weekly} \eqsepv 
       \eqref{eq:RecourseControlBounds_weekly}
            \eqfinp
            \label{eq:NodalCostToGoHD-8}  
    \end{align}
\end{subequations}
Note that the nodal price cost-to-go function~\eqref{eq:NodalCostToGoHD}
at week~$\week$ depends only on the values of the price decomposition process $\price_{\openWclosed{\week}}^{\node}$
from week~$\week$ to week~$\weeksup$.

If the sequence $\bp{\va{\Uncertain}_{\openWclosed{\binf{\week}}}, \dots,
\va{\Uncertain}_{\openWclosed{\week}}, \dots, \va{\Uncertain}_{\openWclosed{\bsup{\week}}}}$
of uncertainties is weekly independent as in Assumption~\ref{assumption:weekly_independence},
the weekly nodal dynamic programming equations
\begin{subequations}
    \label{eq:NodalPriceValueFunctionDynamicProgrammingHD}
    \begin{align}
   &
             \nBellmanf{\weeklast}{{\node}}\nc{\price^{\node}}\np{\state^{\node}} = \FinalCost^{\node}\np{\state^{\node}} \eqfinv
             \\
 \begin{split}
  &\nBellmanf{\week}{{\node}}\nc{\price^{\node}}\bp{\state^{\node}} =
        \EE  
         \Biggl[
            \min
                       {\InstantaneousCost}_{\week}^{\node}
                    \np{{\state^{\node}},
                       \va{\Uncertain}_{\openWclosed{\week}}^{\node},
                       {\recoursecontrol}_{\openWclosed{\week}}^{\node}}
                       +\price_{\openWclosed{\week}}^{\node} \flownode_{\openWclosed{\week}}^{\node}
                              +
                    \nBellmanf{\week\successor}{{\node}}
                    \nc{\price^{\node}}\bp{{\dynamics}_{\week}^{\node}\np{{\state^{\node}},
                                                   \va{\Uncertain}_{\openWclosed{\week}}^{\node},
                                                   {\recoursecontrol}_{\openWclosed{\week}}^{\node}}}
              \Biggr] \eqfinv
\end{split}
              \\
              & \qquad\ \qquad\qquad\quad \st \quad
              \eqref{eq:WeeklyNodalBalanceConstraint} \eqsepv 
            \eqref{eq:StorageLevelBounds_weekly} \eqsepv 
            \eqref{eq:RecourseControlBounds_weekly}  
             \eqfinv
    \end{align}
\end{subequations}
 provide an optimal solution for the problem~\eqref{eq:NodalPriceValueFunctionDetailHD}.
\sloppy We highlight that the hourly uncertainties within the week~\(\va{\Uncertain}_{\openWclosed{\week}}=
  \bp{\va{\Uncertain}_{\whinfsuccessor},
\dots, \va{\Uncertain}_{\wh\successor}, \dots, \va{\Uncertain}_{\wsuccessorhinf}}\)
need not be assumed independent. 

We get that, for all $\state^{\node}\in\STATE_{\week}^{\node}$,
\begin{align}
        \label{eq:NodalPriceValueFunctionAndCostToGoHD}
    \Value^{{\node}}\nc{{\price}^{\node}}\bp{\state^{\node}} = \nBellmanf{\binf{\week}}{{\node}}\nc{\price^{\node}}\bp{\state^{\node}}
    \eqfinv
\end{align}
that is, the nodal price value function~\eqref{eq:NodalPriceValueFunctionDetailHD}
can be computed recursively by solving dynamic programming equations~\eqref{eq:NodalPriceValueFunctionDynamicProgrammingHD}.

To summarize, the nodal price value functions~\eqref{eq:NodalPriceValueFunctionDetailHD}
are multistage stochastic optimisation problems.
We achieve temporal decomposition, for a given
deterministic price decomposition process~$\price$,
by solving the dynamic programming equations~\eqref{eq:NodalPriceValueFunctionDynamicProgrammingHD},
under the assumption of weekly independence of the uncertainties.

\subsection{Relation between nodal and global cost-to-go functions}
\label{subsection:Relation_between_nodal_and_global_cost_to_go_functions}
We first discuss the relation between the nodal price cost-to-go functions~\eqref{eq:NodalCostToGoHD}
and the global problem value function~\eqref{eq:GlobalProblem}.

\begin{proposition}\label{proposition:GlobalProblemBound}
    For any deterministic price decomposition process~$\price$, 
    if the sequence $\bp{\va{\Uncertain}_{\openWclosed{\binf{\week}}}, \dots,
\va{\Uncertain}_{\openWclosed{\week}}, \dots, \va{\Uncertain}_{\openWclosed{\bsup{\week}}}}$
of uncertainties is weekly independent as in Assumption~\ref{assumption:weekly_independence}, the
sum of the initial nodal price cost-to-go functions and
the arc price value function provides
a lower bound for the global problem~\eqref{eq:GlobalProblem}
    \begin{align}
        \label{eq:GlobalProblem_NodesBound}
        \sum_{\node\in\NODE} \nBellmanf{\binf{\week}}{{\node}}\nc{\price^{\node}}\bp{\stock_{\winfhinf}} + \Value^{\ARC}\nc{\price}
          \leq  \nBellmanf{\binf{\week}}{}\bp{\stock_{\winfhinf}}  
          \eqfinv
    \end{align}
and hence a lower bound for the initial global cost-to-go function (see Equation~\eqref{eq:GlobalProblemOptimalityCostToGo}).
\end{proposition}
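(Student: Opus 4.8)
The plan is to observe that all of the work has already been done upstream, so that the proof reduces to a short substitution. The three ingredients I would assemble are: (i) the lower bound~\eqref{eq:GlobalProblem_bound}, namely $\sum_{\node\in\NODE}\Value^{\node}\nc{\price^{\node}}\bp{\stock_{\winfhinf}^{\node}} + \Value^{\ARC}\nc{\price} \leq \GlobalProblem\bp{\stock_{\winfhinf}}$, which itself follows from weak Lagrangian duality for the relaxation of the spatial coupling constraint~\eqref{eq:globalcoupling} together with the additive separation~\eqref{eq:GlobalPriceValueFunction-3} of the global price value function $\ValueInf\nc{\price}$ into nodal price value functions plus the arc price value function; (ii) the dynamic programming identity~\eqref{eq:NodalPriceValueFunctionAndCostToGoHD}, valid under Assumption~\ref{assumption:weekly_independence}, which states that for every node $\node$ and every initial nodal state, $\Value^{\node}\nc{\price^{\node}}\bp{\state^{\node}} = \nBellmanf{\binf{\week}}{{\node}}\nc{\price^{\node}}\bp{\state^{\node}}$; and (iii) the identity~\eqref{eq:GlobalProblemOptimalityCostToGo}, again valid under Assumption~\ref{assumption:weekly_independence}, identifying $\GlobalProblem\bp{\stock_{\winfhinf}}$ with the initial global cost-to-go function $\nBellmanf{\binf{\week}}{}\bp{\stock_{\winfhinf}}$.

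Carrying this out: I would start from~\eqref{eq:GlobalProblem_bound}. Since, by hypothesis, the sequence $\bp{\va{\Uncertain}_{\openWclosed{\binf{\week}}}, \dots, \va{\Uncertain}_{\openWclosed{\week}}, \dots, \va{\Uncertain}_{\openWclosed{\bsup{\week}}}}$ is weekly independent as in Assumption~\ref{assumption:weekly_independence}, I apply~\eqref{eq:NodalPriceValueFunctionAndCostToGoHD} at each node $\node$ with $\state^{\node} = \stock_{\winfhinf}^{\node}$ to replace $\Value^{\node}\nc{\price^{\node}}\bp{\stock_{\winfhinf}^{\node}}$ by $\nBellmanf{\binf{\week}}{{\node}}\nc{\price^{\node}}\bp{\stock_{\winfhinf}^{\node}}$. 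Summing over $\node\in\NODE$ and adding $\Value^{\ARC}\nc{\price}$ to both sides of the (now rewritten) left-hand side yields exactly~\eqref{eq:GlobalProblem_NodesBound}, i.e. $\sum_{\node\in\NODE}\nBellmanf{\binf{\week}}{{\node}}\nc{\price^{\node}}\bp{\stock_{\winfhinf}} + \Value^{\ARC}\nc{\price} \leq \GlobalProblem\bp{\stock_{\winfhinf}}$. Finally, for the concluding clause, I invoke~\eqref{eq:GlobalProblemOptimalityCostToGo} — legitimate under the same Assumption~\ref{assumption:weekly_independence} — to rewrite the right-hand side $\GlobalProblem\bp{\stock_{\winfhinf}}$ as $\nBellmanf{\binf{\week}}{}\bp{\stock_{\winfhinf}}$, which gives the announced lower bound for the initial global cost-to-go function.

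I do not expect a genuine obstacle here: the proposition is, in effect, a bookkeeping step that packages~\eqref{eq:GlobalProblem_bound}, \eqref{eq:NodalPriceValueFunctionAndCostToGoHD} and~\eqref{eq:GlobalProblemOptimalityCostToGo} together. The only points deserving a line of care are: first, that Assumption~\ref{assumption:weekly_independence} is precisely the hypothesis needed to license \emph{both} dynamic programming reductions simultaneously (the joint weekly independence passes to each nodal noise subprocess $\bp{\va{\Uncertain}_{\openWclosed{\week}}^{\node}}_{\week\in\WEEK}$, while the intra-week hourly noises need not be independent, as already noted after~\eqref{eq:NodalPriceValueFunctionDynamicProgrammingHD}); and second, the harmless notational identification implicit in~\eqref{eq:GlobalProblem_NodesBound}, whereby $\nBellmanf{\binf{\week}}{{\node}}\nc{\price^{\node}}\bp{\stock_{\winfhinf}}$ is read as its evaluation at the nodal component $\stock_{\winfhinf}^{\node}$, which is meaningful because $\nBellmanf{\binf{\week}}{{\node}}$ depends on $\stock_{\winfhinf}$ only through that component. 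If one wanted the argument fully self-contained rather than citing~\eqref{eq:GlobalProblem_bound}, I would additionally spell out the weak-duality step: on the feasible set of~\eqref{eq:GlobalProblem} the random vector $\va{\FlowNode} - \NODEARCMATRIX\va{\FlowArc}$ vanishes, so adding $\EE\bc{\proscal{\price}{\va{\FlowNode} - \NODEARCMATRIX\va{\FlowArc}}}$ to the objective leaves it unchanged there, while dropping~\eqref{eq:globalcoupling} can only lower the infimum, which is the definition of $\ValueInf\nc{\price}$ in~\eqref{eq:GlobalPriceValueFunction-1}.
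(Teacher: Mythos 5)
Your proposal is correct and follows essentially the same route as the paper's own proof: it starts from the Lagrangian lower bound~\eqref{eq:GlobalProblem_bound}, substitutes the nodal identity~\eqref{eq:NodalPriceValueFunctionAndCostToGoHD} and the global identity~\eqref{eq:GlobalProblemOptimalityCostToGo}, both licensed by Assumption~\ref{assumption:weekly_independence}. The extra remarks you add (spelling out the weak-duality step and the notational identification of the nodal state component) are accurate but not needed beyond what the paper already records.
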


\begin{proof}
     Equation~\eqref{eq:GlobalProblem_bound} holds for any deterministic price decomposition process~$\price$. 
    Under Assumption~\ref{assumption:weekly_independence}, for all $\node\in\NODE$,
    $
      \Value^{\node}\nc{\price^{\node}}\bigl(\stock_{\winfhinf}^{\node}\bigr)
      = \nBellmanf{\binf{\week}}{{\node}}\nc{\price^{\node}}\bigl(\stock_{\winfhinf}^{\node}\bigr),
     $   
    by Equation~\eqref{eq:NodalPriceValueFunctionAndCostToGoHD}, and
    $
      \GlobalProblem\bigl(\stock_{\winfhinf}\bigr)
      = \nBellmanf{\binf{\week}}{}\bigl(\stock_{\winfhinf}\bigr),
    $
    by Equation~\eqref{eq:GlobalProblemOptimalityCostToGo}.
    Substituting these equalities into Equation~\eqref{eq:GlobalProblem_bound} yields
    Equation~\eqref{eq:GlobalProblem_NodesBound}.
\end{proof}

We now study the relation between the nodal price cost-to-go functions~\eqref{eq:NodalCostToGoHD}
and the global cost-to-go functions~\eqref{eq:GlobalCostToGoHD}
for all $\week\in\WEEK$.
For this purpose, 
we first define the transport price cost-to-go functions as the constant functions given by 
\begin{subequations}\label{eq:ArcPriceCostToGoFunction} 
    \begin{align} 
        \nBellmanf{\week}{\ARC}\nc{\price} =
        \mathbb{E} 
        \vast[ \min & 
            \sum_{\week'=\week}^{\weeksup}  \left(\sum_{\arc\in\ARC} 
                        \InstantaneousCost_{\week'}^{\arc}\bp{
                            \va{\FlowArc}_{\openWclosed{\week'}}^{\arc}    
                        }
                - {\proscal{\NODEARCMATRIX\transp {\price_{\openWclosed{\week'}}}}{\va{\FlowArc}_{\openWclosed{\week'}}}}
                    \right)
                    \vast]\\
        \st & \quad  \forall \arc\in\ARC \eqsepv \forall \week'\in\ic{\week, \weeksup} \eqsepv
         \bsigmaf{\va{\FlowArc}_{\openWclosed{\week'}}^{\arc}}
        \subseteq
        \bsigmaf{ \va{\Uncertain}_{\openWclosed{{\week}}}, 
                                            \cdots, 
                                            \va{\Uncertain}_{\openWclosed{\week'}}}\eqfinp
    \end{align}
\end{subequations}
Since there is no temporal coupling in the transport problem,
it is easy to see that the initial transport cost-to-go function is 
equal to the arc price value function~\eqref{eq:ArcPriceValueFunction}:
\begin{align}
    \Value^{\ARC}\nc{\price} = \nBellmanf{\binf{\week}}{\ARC}\nc{\price} \eqfinp
\end{align}

Note that the 
 transport price cost-to-go function $\nBellmanf{\week}{\ARC}\nc{\price}$ 
 definition does not depend on the nodal price value functions information structures, and does not
 depend on any nodal storage level.
Using~\eqref{eq:ArcPriceCostToGoFunction}, we can now state the relation between the nodal price cost-to-go functions
and the global cost-to-go function for all $\week\in\WEEK$.

\begin{proposition} 
    \label{proposition:GlobalCostToGoBound}
    For any deterministic price decomposition process~$\price$, if the sequence
     $\bp{\va{\Uncertain}_{\openWclosed{\binf{\week}}}, \dots,
    \va{\Uncertain}_{\openWclosed{\week}}, \dots, \va{\Uncertain}_{\openWclosed{\bsup{\week}}}}$
    of uncertainties is weekly independent as in Assumption~\ref{assumption:weekly_independence},
    then, for all $\week\in\WEEK$,  the sum of the nodal price cost-to-go functions
    and of the transport price cost-to-go function provides a lower bound
    for the global cost-to-go function: 
    \begin{align}
        \sum_{\node\in\NODE} \nBellmanf{{\week}}{{\node}}\nc{\price^{\node}}\bp{\state^{\node}} 
        +\nBellmanf{\week}{\ARC}\nc{\price}
        \leq 
        \nBellmanf{{\week}}{}\bp{\state}\eqsepv \forall\state = \bseqp{\state_{\week}^{\node}}{\node\in\NODE}\in\STATE_{\week} \eqfinp
\end{align}
\end{proposition}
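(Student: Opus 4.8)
The plan is to generalise the Lagrangian-relaxation argument used for Proposition~\ref{proposition:GlobalProblemBound} (which is the case $\week=\binf{\week}$) to the tail subproblem that defines $\nBellmanf{\week}{}$. Fix $\week\in\WEEK$ and $\state=\bseqp{\state^{\node}}{\node\in\NODE}\in\STATE_{\week}$, and start from the multistage definition~\eqref{eq:GlobalCostToGoHD} of $\nBellmanf{\week}{}\bp{\state}$. First I would dualise the Kirchhoff coupling constraint~\eqref{eq:GlobalCostToGoHD-10} with the deterministic multiplier $\price$ restricted to the weeks $\week'\in\ic{\week,\weeksup}$, i.e. add $\EE\bc{\sum_{\week'=\week}^{\weeksup}\proscal{\price_{\openWclosed{\week'}}}{\va{\FlowNode}_{\openWclosed{\week'}}-\NODEARCMATRIX\va{\FlowArc}_{\openWclosed{\week'}}}}$ to the objective~\eqref{eq:GlobalCostToGoHD-1}. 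On the feasible set of~\eqref{eq:GlobalCostToGoHD} this added term vanishes by~\eqref{eq:GlobalCostToGoHD-10}, so the value of the minimisation is unchanged; dropping~\eqref{eq:GlobalCostToGoHD-10} afterwards can only decrease the minimum. This yields $\nBellmanf{\week}{}\bp{\state}\ge\widetilde{B}$, where $\widetilde{B}$ is the relaxed problem keeping only the initial condition~\eqref{eq:GlobalCostToGoHD-2}, the dynamics~\eqref{eq:GlobalCostToGoHD-3}, the nodal balances~\eqref{eq:GlobalCostToGoHD-4}, the information constraints~\eqref{eq:GlobalCostToGoHD-6}--\eqref{eq:GlobalCostToGoHD-8}, and the box constraints~\eqref{eq:GlobalCostToGoHD-9}.

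Then I would separate $\widetilde{B}$ into nodal and arc contributions. By linearity of expectation and the adjoint identity $\proscal{\price_{\openWclosed{\week'}}}{\va{\FlowNode}_{\openWclosed{\week'}}-\NODEARCMATRIX\va{\FlowArc}_{\openWclosed{\week'}}}=\sum_{\node\in\NODE}\proscal{\price_{\openWclosed{\week'}}^{\node}}{\va{\FlowNode}_{\openWclosed{\week'}}^{\node}}-\proscal{\NODEARCMATRIX\transp\price_{\openWclosed{\week'}}}{\va{\FlowArc}_{\openWclosed{\week'}}}$ (the same manipulation as around~\eqref{eq:ArcPriceValueFunction}), the objective of $\widetilde{B}$ rewrites as a sum over $\node\in\NODE$ of the nodal running costs $\InstantaneousCost_{\week'}^{\node}$, the nodal final costs $\FinalCost^{\node}$ and the nodal price terms $\proscal{\price_{\openWclosed{\week'}}^{\node}}{\va{\FlowNode}_{\openWclosed{\week'}}^{\node}}$, plus a sum over $\arc\in\ARC$ of the transport costs $\InstantaneousCost_{\week'}^{\arc}$ minus the arc price terms. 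The point is that, once~\eqref{eq:GlobalCostToGoHD-10} has been removed, the surviving constraints are block-separable: \eqref{eq:GlobalCostToGoHD-2}--\eqref{eq:GlobalCostToGoHD-4}, \eqref{eq:GlobalCostToGoHD-6}, \eqref{eq:GlobalCostToGoHD-7} and the storage and recourse bounds in~\eqref{eq:GlobalCostToGoHD-9} involve, for each $\node$ separately, only the node-$\node$ variables $\bp{\va{\State}^{\node},\va{\RecourseControl}^{\node},\va{\FlowNode}^{\node}}$, whereas~\eqref{eq:GlobalCostToGoHD-8} involves only the arc flows $\va{\FlowArc}$. Hence the minimisation distributes over these independent blocks, giving $\widetilde{B}=\sum_{\node\in\NODE}\widetilde{B}^{\node}+\widetilde{B}^{\ARC}$. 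Here $\widetilde{B}^{\node}$ is exactly the multistage problem from week $\week$ with initial state $\state^{\node}$ described by~\eqref{eq:NodalCostToGoHD-1}--\eqref{eq:NodalCostToGoHD-8}, that is $\widetilde{B}^{\node}=\nBellmanf{\week}{\node}\nc{\price^{\node}}\bp{\state^{\node}}$ by definition~\eqref{eq:NodalCostToGoHD}, and $\widetilde{B}^{\ARC}$ is exactly $\nBellmanf{\week}{\ARC}\nc{\price}$ by definition~\eqref{eq:ArcPriceCostToGoFunction}. Combining the two displays gives $\nBellmanf{\week}{}\bp{\state}\ge\sum_{\node\in\NODE}\nBellmanf{\week}{\node}\nc{\price^{\node}}\bp{\state^{\node}}+\nBellmanf{\week}{\ARC}\nc{\price}$, which is the claim.

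The step I expect to be the main obstacle is the exact block-separation of the constrained minimum: one has to check carefully that no constraint surviving the relaxation of~\eqref{eq:GlobalCostToGoHD-10} couples variables attached to distinct nodes, nor nodal variables with arc flows, and that the information constraints~\eqref{eq:NodalCostToGoHD-6}--\eqref{eq:NodalCostToGoHD-7} of the nodal tail problems coincide with the node-$\node$ restriction of the global ones~\eqref{eq:GlobalCostToGoHD-6}--\eqref{eq:GlobalCostToGoHD-7} --- both condition on $\va{\Uncertain}_{\openWclosed{\week}},\dots,\va{\Uncertain}_{\openWclosed{\week'}}$, i.e. on the uncertainties from the current starting week $\week$ onward, so the alignment is clean. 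A minor care point is that the final cost $\FinalCost^{\node}\bp{\va{\State}_{\weeklast}^{\node}}$ and the initial condition $\va{\State}_{\week}=\state$ contribute only node-$\node$ terms. Note that Assumption~\ref{assumption:weekly_independence} is inherited from the statement but is not actually needed for this inequality, since $\nBellmanf{\week}{}$, $\nBellmanf{\week}{\node}\nc{\price^{\node}}$ and $\nBellmanf{\week}{\ARC}\nc{\price}$ are all defined directly as multistage problems; weekly independence would only be invoked if one further wished to evaluate $\nBellmanf{\week}{\node}\nc{\price^{\node}}$ through the dynamic programming equations~\eqref{eq:NodalPriceValueFunctionDynamicProgrammingHD}. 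Alternatively, one could prove the inequality by backward induction on $\week$ using the monotonicity of the global Bellman operator, combining at each step the relaxation bound with the separability of the relaxed one-stage problem across nodes and arcs, but the direct relaxation argument above is shorter.
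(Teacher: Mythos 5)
Your proof is correct, and it takes a genuinely different route from the paper's in its second half. Both arguments begin identically: your relaxed problem $\widetilde{B}$ is exactly the global price cost-to-go function $\nBellmanf{\week}{}\nc{\price}\bp{\state}$ that the paper introduces in~\eqref{eq:DualGlobalCostToGoHD}, and the weak-duality step $\nBellmanf{\week}{}\nc{\price}\bp{\state}\leq\nBellmanf{\week}{}\bp{\state}$ is the same. The divergence is in how the relaxed problem is split: the paper proves the identity $\nBellmanf{\week}{}\nc{\price}\bp{\state}=\sum_{\node\in\NODE}\nBellmanf{{\week}}{{\node}}\nc{\price^{\node}}\bp{\state^{\node}}+\nBellmanf{\week}{\ARC}\nc{\price}$ by \emph{backward induction} on the week, passing through the dynamic programming recursions~\eqref{eq:DynProgGlobalPriceHD} and~\eqref{eq:NodalPriceValueFunctionDynamicProgrammingHD} and separating the one-stage minimisation at each step; this is precisely where Assumption~\ref{assumption:weekly_independence} enters, since those recursions are only valid under weekly independence. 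You instead perform the separation directly on the extensive (multistage) form of $\widetilde{B}$, using additivity of the dualised objective and block-separability of the surviving constraints, and identify each block with the multistage definitions~\eqref{eq:NodalCostToGoHD} and~\eqref{eq:ArcPriceCostToGoFunction} — no induction, no DP equations. Your side remark is accurate: with the multistage definitions of the nodal and transport price cost-to-go functions, the inequality holds without weekly independence, which is only needed to identify those functions with the quantities actually computed by the backward recursions~\eqref{eq:NodalPriceValueFunctionDynamicProgrammingHD}. What each approach buys: yours is shorter and pinpoints where the independence assumption is genuinely used; the paper's induction yields, as a by-product, the dynamic programming recursion satisfied by the global price cost-to-go function, which mirrors how the quantities are computed numerically and feeds directly into the policy-design discussion that follows. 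The one point to state explicitly if you write this up is that the nodal balance constraint~\eqref{eq:WeeklyNodalBalanceConstraint} (present in~\eqref{eq:GlobalCostToGoHD-4}) must be carried into each nodal block so that the flow variables $\va{\FlowNode}^{\node}$ are not left free in the dualised linear term; it is node-separable, so this does not disturb the decomposition.
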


\begin{proof}
    The proof is similar to the one in~\cite[Proposition 7.3.10]{pacaud:tel-02134163}.
   
   The dynamic programming Equations~\eqref{eq:GlobalDP_HD}
   satisfied by global cost-to-go functions~\eqref{eq:GlobalCostToGoHD} can
   be rewritten, 
    for all $\week \in \WEEK$ and  
    for all $\state = \bseqp{\state_{\week}^{\node}}{\node\in\NODE}\in\STATE_{\week}$,
    as
\begin{subequations}
    \label{eq:DynProgGlobalHD}
    \begin{align}
       \nBellmanf{{\weeklast}}{}\bp{\state} & = \sum_{\node\in\NODE} \FinalCost^{\node}\np{\state^{\node}}
        \\
        \begin{split}
            \nBellmanf{{\week}}{}\bp{\state} & = 
              \EE  
            \vast[
                    \min
            \Biggl\{ 
                \sum_{\node\in\NODE} 
                {\InstantaneousCost}_{\week}^{\node}
                \np{{\state^{\node}},
                       \va{\Uncertain}_{\openWclosed{\week}}^{\node},
                       {\recoursecontrol}_{\openWclosed{\week}}^{\node}}
                +
                \sum_{\arc\in\ARC}   
                    \InstantaneousCost_{\week}^{\arc}\bp{
                        {\flowarc}_{\openWclosed{\week}}^{\arc}    
                    }
               \\
                &
                \qquad
                \qquad
                \quad
                \qquad
                \qquad
                \quad
                \qquad
                \quad
               + 
                \nBellmanf{{\week\successor}}{}\Bp{ \bseqp{
                                                    {\dynamics}_{\week}^{\node}\np{{\state^{\node}},
                                                   \va{\Uncertain}_{\openWclosed{\week}}^{\node},
                                                   {\recoursecontrol}_{\openWclosed{\week}}^{\node}}}{\node\in\NODE}  }
          \Biggr\}  \vast] \eqfinv
        \end{split}
        \\
        &\st \nonumber
         \\
        & \flownode_{\openWclosed{\week}} - \NODEARCMATRIX {\flowarc}_{\openWclosed{\week}}  =0 \label{eq:CouplingConstraintGlobalDynProgHD}
            \eqfinv 
         \\ 
         &
        \eqref{eq:WeeklyNodalBalanceConstraint} \eqsepv
        \eqref{eq:StorageLevelBounds_weekly} \eqsepv 
        \eqref{eq:RecourseControlBounds_weekly}
        \eqfinp
    \end{align}
\end{subequations}
For $\week\in\WEEK$, we define the global price cost-to-go function~$\nBellmanf{{\week}}{}\nc{\price}$ associated
with the global cost-to-go problem~\eqref{eq:GlobalCostToGoHD} 
by dualizing the coupling constraint~\eqref{eq:GlobalCostToGoHD-10}:
\begin{subequations}
    \label{eq:DualGlobalCostToGoHD}
    \begin{align}
    \begin{split}  
           \nBellmanf{\week}{}\nc{\price}\bp{\state} =\label{eq:DualGlobalCostToGoHD-1}
        &  
          \min
        \EE\vast[ \sum_{\week'=\week}^{\bsup{\week}}
      \Biggl(  \sum_{\node\in\NODE}       \left(
                    \InstantaneousCost_{\week'}^{\node}\bp{
                            \va{\State}_{\week'}^{\node}, 
                            \va{\Uncertain}_{\openWclosed{\week'}}^{\node},
                            \va{\RecourseControl}_{\openWclosed{\week'}}^{\node}}
                        +
                        \FinalCost^{\node}\bp{\va{\State}_{\weeklast}^{\node}}
                    \right)
               \\
               &  \qquad
                    \qquad
                    \qquad
                  +
                    \sum_{\arc\in\ARC}    
                    \InstantaneousCost_{\week'}^{\arc}\bp{
                        \va{\FlowArc}_{\openWclosed{\week'}}^{\arc}    
                    }
               + \proscal{\price_{\openWclosed{\week'}}}{\va{\FlowNode}_{\openWclosed{\week'}}  - \NODEARCMATRIX \va{\FlowArc }_{\openWclosed{\week'}}}
               \Biggr)
        \vast] 
        \end{split} 
        \\
       & \st\eqsepv \forall \week'\in \ic{\week,\bsup{\week}} \nonumber
          \\
    & 
    \va{\State}_{\week}            =  \label{eq:DualGlobalCostToGoHD-2}
            \state
                \eqfinv
    \\
    &
        \va{\State}_{{\week'}\successor}^{\node} 
            = 
            \dynamics_{\week'}^{\node}\Bp{
                \va{\State}_{\week'}^{\node},
                \va{\Uncertain}_{\openWclosed{\week'}}^{\node},
                \va{\RecourseControl}_{\openWclosed{\week'}}^{\node}
            }
            \eqsepv \forall \node\in\NODE \eqfinv  \label{eq:DualGlobalCostToGoHD-3}
    \\
     & 
     \nodeBalance_{\openWclosed{\week'}}^{\node}\bp{
                    \va{\Uncertain}_{\openWclosed{\week'}}^{\node},
                    \va{\RecourseControl}_{\openWclosed{\week'}}^{\node}
     }  = \va{\FlowNode}_{\openWclosed{\week'}}^{\node}
            \eqsepv \forall \node\in\NODE \label{eq:DualGlobalCostToGoHD-4}
            \eqfinv 
      \\
     & 
        \bsigmaf{\va{\RecourseControl}_{\openWclosed{\week'}}^{\node}}
        \subseteq 
      \bsigmaf{\va{\Uncertain}_{\openWclosed{{\week}}}, \cdots, 
                                 \va{\Uncertain}_{\openWclosed{{\week'}\predecessor}},  
                                 \va{\Uncertain}_{\openWclosed{\week'}}}
            \eqsepv \forall \node\in\NODE \eqfinv \label{eq:DualGlobalCostToGoHD-6}
     \\ 
        & 
        \bsigmaf{\va{\FlowNode}_{\openWclosed{\week'}}^{\node}}
        \subseteq 
        \bsigmaf{\va{\Uncertain}_{\openWclosed{{\week}}}, \cdots, 
                                 \va{\Uncertain}_{\openWclosed{{\week'}\predecessor}},  
                                 \va{\Uncertain}_{\openWclosed{\week'}}}
            \eqsepv \forall \node\in\NODE \eqfinv \label{eq:DualGlobalCostToGoHD-7}
                    \\
        & 
        \bsigmaf{\va{\FlowArc}_{\openWclosed{\week'}}^{\arc}}
        \subseteq  \bsigmaf{\va{\Uncertain}_{\openWclosed{{\week}}}, \cdots, 
                                 \va{\Uncertain}_{\openWclosed{{\week'}\predecessor}},  
                                 \va{\Uncertain}_{\openWclosed{\week'}}}
            \eqsepv \forall \arc\in\ARC \eqfinv \label{eq:DualGlobalCostToGoHD-8}
    \\
    &
      \eqref{eq:StorageLevelBounds_weekly} \eqsepv 
       \eqref{eq:RecourseControlBounds_weekly}
        \eqsepv \forall \hour\in\HOUR \eqsepv\forall \node \in \NODE \label{eq:DualGlobalCostToGoHD-9}
\eqfinp
        \end{align}
\end{subequations}
Under Assumption~\ref{assumption:weekly_independence} of weekly independence of the uncertainties,
the sequence of global price cost-to-go 
functions~$\bseqp{\nBellmanf{{\week}}{}\nc{\price}}{\week\in\cup\na{\WEEK\weeklast}}$
satisfies the following dynamic programming equations for all
$\state = \bseqp{\state_{\week}^{\node}}{\node\in\NODE}\in\STATE_{\week}$:
\begin{subequations}
    \label{eq:DynProgGlobalPriceHD}
    \begin{align}
        \label{eq:DynProgGlobalPriceHD-1}
       \nBellmanf{{\weeklast}}{}\nc{\price}\bp{\state} & = \sum_{\node\in\NODE} \FinalCost^{\node}\np{\state^{\node}}
        \\
        \begin{split}
            \label{eq:DynProgGlobalPriceHD-2}
            \nBellmanf{{\week}}{}\nc{\price}\bp{\state} & = 
             \EE  
            \vast[
                \min_{
                    {\recoursecontrol_{\openWclosed{\week}},
                     \flownode_{\openWclosed{\week}}},
                      \flowarc_{\openWclosed{\week}}
                      }
              \Biggl\{ 
                \sum_{\node\in\NODE} 
                {\InstantaneousCost}_{\week}^{\node}
                \np{{\state^{\node}},
                       \va{\Uncertain}_{\openWclosed{\week}}^{\node},
                       {\recoursecontrol}_{\openWclosed{\week}}^{\node}}
                +    \sum_{\arc\in\ARC} 
                    \InstantaneousCost_{\week}^{\arc}\bp{
                        {\flowarc}_{\openWclosed{\week}}^{\arc}    
                    }
                               \\
                & 
                \qquad\qquad
                +
                \proscal{\price_{\openWclosed{\week}}}{\flownode_{\openWclosed{\week}} - \NODEARCMATRIX {\flowarc}_{\openWclosed{\week}}} 
                +    \nBellmanf{{\week\successor}}{}\nc{\price}\Bp{\bseqp{
                    {\dynamics}_{\week}^{\node}\np{{\state^{\node}},
                                                   \va{\Uncertain}_{\openWclosed{\week}}^{\node},
                                                   {\recoursecontrol}_{\openWclosed{\week}}^{\node}}}{\node\in\NODE}    }
         \Biggr\}  \vast] \eqfinv
        \end{split}
        \\
        &\st \qquad
         \eqref{eq:WeeklyNodalBalanceConstraint} 
        \eqsepv
         \eqref{eq:StorageLevelBounds_weekly} 
         \eqsepv
            \eqref{eq:RecourseControlBounds_weekly}
        \eqfinp \label{eq:DynProgGlobalPriceHD-4}
    \end{align}
\end{subequations}
We know that, for any deterministic price decomposition process~$\price$,
\begin{align}
      \nBellmanf{{\week}}{}\nc{\price}\bp{\state} \leq   \nBellmanf{{\week}}{}\bp{\state}.
\end{align}
Now we prove by induction that,
$\forall\week\in\WEEK$, $\forall  \state = \bseqp{\state_{\week}^{\node}}{\node\in\NODE}\in\STATE_{\week}$, we have that 
\begin{align} 
    \nBellmanf{{\week}}{}\nc{\price}\bp{\state}  = \sum_{\node\in\NODE} \nBellmanf{{\week}}{{\node}}\nc{\price^{\node}}\bp{\state^{\node}} 
        + \nBellmanf{\week}{\ARC}\nc{\price}\eqfinp\label{eq:InductionHypothesisGlobalCostToGoHD}
\end{align}
The property~\eqref{eq:InductionHypothesisGlobalCostToGoHD} holds for $\week=\weeklast$ since~$\nBellmanf{\weeklast}{\ARC}\nc{\price}= 0$
and $\nBellmanf{\weeklast}{{\node}}\nc{\price}\np{\state} = \FinalCost^{\node}\np{\state^{\node}}$.
Let $\week\in\WEEK$ be such that~\eqref{eq:InductionHypothesisGlobalCostToGoHD} is true for $\week\successor$. 
Then, for all $\state = \bseqp{\state_{\week}^{\node}}{\node\in\NODE}\in\STATE_{\week}$,
the recursive equation in~\eqref{eq:DynProgGlobalPriceHD} can be rewritten as
\begin{subequations}
    \begin{align}
       \begin{split}
            \nBellmanf{{\week}}{}\nc{\price}\bp{\state} & = 
             \EE  
            \vast[
                \min
                _{\bseqp{
                \recoursecontrol_{\openWclosed{\week}}^{\node},
                             \flownode_{\openWclosed{\week}}^{\node}, 
                             \flowarc_{\openWclosed{\week}}^{\arc}
                             }{\node\in\NODE}}
              \Biggl\{ 
                \sum_{\node\in\NODE} 
                {\InstantaneousCost}_{\week}^{\node}
                \np{{\state^{\node}},
                       \va{\Uncertain}_{\openWclosed{\week}}^{\node},
                       {\recoursecontrol}_{\openWclosed{\week}}^{\node}}
                +    \sum_{\arc\in\ARC}   
                    \InstantaneousCost_{\week}^{\arc}\bp{
                        {\flowarc}_{\openWclosed{\week}}^{\arc}    
                    }
            \\
                & 
                \qquad\qquad
                     +\proscal{\price_{\openWclosed{\week}}}{\flownode_{\openWclosed{\week}} - \NODEARCMATRIX {\flowarc}_{\openWclosed{\week}}} 
                +      
            \sum_{\node\in\NODE} \nBellmanf{{\week\successor}}{{\node}}\nc{\price^{\node}}\bp{ {\dynamics}_{\week}^{\node}\np{{\state^{\node}},
                                                   \va{\Uncertain}_{\openWclosed{\week}}^{\node},
                                                   {\recoursecontrol}_{\openWclosed{\week}}^{\node}}} 
        +\nBellmanf{\week\successor}{\ARC}\nc{\price}
                                                   \Biggr\}  \vast] \eqfinv
        \end{split}
        \\
        &\st \qquad
            \eqref{eq:WeeklyNodalBalanceConstraint} 
        \eqsepv
         \eqref{eq:StorageLevelBounds_weekly} 
         \eqsepv
            \eqref{eq:RecourseControlBounds_weekly}
        \eqfinp 
    \end{align}
\end{subequations}
We can rearrange the terms in the expectation and the minimization,
separating the nodal terms from the transport terms, and thus get 
\begin{subequations}
    \begin{align}
            \nBellmanf{{\week}}{}\nc{\price}\bp{\state}  = 
          &  \nonumber
            \\
        \qquad  \qquad   \qquad 
        \begin{split}\label{eq:DemCostToGo-1}
            &
             \EE  
            \vast[
                \min_{\bseqp{
                            \recoursecontrol_{\openWclosed{\week}}^{\node},
                             \flownode_{\openWclosed{\week}}^{\node}
                             }{\node\in\NODE}} \Biggl\{ 
                \sum_{\node\in\NODE} 
             \biggl(   {\InstantaneousCost}_{\week}^{\node}
                \np{{\state^{\node}},
                       \va{\Uncertain}_{\openWclosed{\week}}^{\node},
                       {\recoursecontrol}_{\openWclosed{\week}}^{\node}}
                        +\price^{\node}_{\openWclosed{\week}}\flownode_{\openWclosed{\week}}^{\node}  
                       \\& 
                          \qquad\qquad\qquad\qquad\qquad
                           +
                         \nBellmanf{{\week\successor}}{{\node}}\nc{\price^{\node}}
                            \bp{ {\dynamics}_{\week}^{\node}\np{{\state^{\node}},
                                                   \va{\Uncertain}_{\openWclosed{\week}}^{\node},
                                                   {\recoursecontrol}_{\openWclosed{\week}}^{\node}}} 
                  \biggr)                  \Biggr\}  \vast]
       \end{split}
         \\
        & \qquad\qquad \st \qquad
         \eqref{eq:WeeklyNodalBalanceConstraint} 
        \eqsepv
         \eqref{eq:StorageLevelBounds_weekly} 
         \eqsepv
            \eqref{eq:RecourseControlBounds_weekly}        
        \eqfinp 
        \label{eq:DemCostToGo-4}
        \\
        \quad  
       \qquad
       \begin{split} \label{eq:DemCostToGo-2}
       &    + \EE  
            \vast[          
                 \min_{\flowarc_{\openWclosed{\week}}}
                \Biggl\{ 
                      \sum_{\arc\in\ARC}  
                    \InstantaneousCost_{\week}^{\arc}\bp{
                        {\flowarc}_{\openWclosed{\week}}^{\arc}    
                    }
               -\proscal{\NODEARCMATRIX\transp \price_{\openWclosed{\week}}}{{\flowarc}_{\openWclosed{\week}}} 
            + \nBellmanf{\week\successor}{\ARC}\nc{\price}
                                                   \Biggr\}  \vast] \eqfinp
        \end{split}
    \end{align}
\end{subequations}
Note that the constraints~\eqref{eq:DemCostToGo-4} concern only the
minimizations over the nodal decisions~$\recoursecontrol_{\openWclosed{\week}}^{\node}$, and $\flownode_{\openWclosed{\week}}^{\node}$.
We can exchange the sum over nodes and the minimizations and expectations in line~\eqref{eq:DemCostToGo-1} 
 to get
\begin{subequations}
    \begin{align}
            \nBellmanf{{\week}}{}\nc{\price}\bp{\state} & = \nonumber
            \\
            & \sum_{\node\in\NODE} 
             \vast(   
            \EE  
            \Biggl[
                \min_{{ 
                    \recoursecontrol_{\openWclosed{\week}}^{\node},
                     \flownode_{\openWclosed{\week}}^{\node}}}
               {\InstantaneousCost}_{\week}^{\node}
                \np{{\state^{\node}},
                       \va{\Uncertain}_{\openWclosed{\week}}^{\node},
                       {\recoursecontrol}_{\openWclosed{\week}}^{\node}}
                        +\price^{\node}_{\openWclosed{\week}}\flownode_{\openWclosed{\week}}^{\node}  
                           +
                         \nBellmanf{{\week\successor}}{{\node}}\nc{\price^{\node}}\bp{ {\dynamics}_{\week}^{\node}\np{{\state^{\node}},
                                                   \va{\Uncertain}_{\openWclosed{\week}}^{\node},
                                                   {\recoursecontrol}_{\openWclosed{\week}}^{\node}}} 
                                 \Biggr]\eqfinv \nonumber
                                 \\
                            &    \qquad\qquad\qquad \st \qquad
                                 \eqref{eq:WeeklyNodalBalanceConstraint} 
                                \eqsepv
                                \eqref{eq:StorageLevelBounds_weekly} \eqsepv
                                \eqref{eq:RecourseControlBounds_weekly}
                                  \vast)  
                                        \\
                & 
            + \EE  
            \vast[          \min_{\flowarc_{\openWclosed{\week}}}
                \Biggl\{ 
                      \sum_{\arc\in\ARC}  
                    \InstantaneousCost_{\week}^{\arc}\bp{
                        {\flowarc}_{\openWclosed{\week}}^{\arc}    
                    }
               -\proscal{\NODEARCMATRIX\transp \price_{\openWclosed{\week}}}{{\flowarc}_{\openWclosed{\week}}} 
            + \nBellmanf{\week\successor}{\ARC}\nc{\price}
                                                   \Biggr\}  \vast] \eqfinp
    \end{align}
\end{subequations}
Using the dynamic programming Equations~\eqref{eq:NodalPriceValueFunctionDynamicProgrammingHD}, we can see that
the terms inside the sum over the nodes are equal to the nodal price
 cost-to-go functions~$\nBellmanf{{\week}}{{\node}}\nc{\price^{\node}}\np{\state^{\node}}$
 defined in~\eqref{eq:NodalCostToGoHD}. 
Finally, from definition~\eqref{eq:ArcPriceCostToGoFunction}, we can see that the minimisation
over~$\flowarc_{\openWclosed{\week}}$ is equal to the transport price cost-to-go function~$\Value^{\ARC}_{\week}\nc{\price}$.
Therefore, we obtain, $\forall\week\in\WEEK$, $\forall \state = \bseqp{\state_{\week}^{\node}}{\node\in\NODE}\in\STATE_{\week}$, that
\begin{align}
        \nBellmanf{{\week}}{}\nc{\price}\bp{\state} = 
        \sum_{\node\in\NODE} \nBellmanf{{\week}}{{\node}}\nc{\price^{\node}}\np{\state^{\node}}
         + \nBellmanf{\week}{\ARC}\nc{\price} \eqfinp
\end{align}
This ends the proof.
\end{proof}

\subsection{Designing global policies from the nodal cost-to-go functions}
\label{subsection:Designing_global_policies_from_the_nodal_cost_to_go_functions}

We recall that, in this work, we do not intend to create a scheduling program for
the multinode energy system, but rather to compute usage values (through the computation
of Bellman or cost-to-go functions) to be used in simulations
while carrying out prospective studies over a set of uncertainty scenarios.
In this context, we present here how, from the results in~\S\ref{subsection:Relation_between_nodal_and_global_cost_to_go_functions},
we obtain proxies  for the global cost-to-go functions, hence strategies admissible for
the global optimisation problem~\eqref{eq:GlobalProblem}.

We use the result in Proposition~\ref{proposition:GlobalCostToGoBound} to
build proxies for global cost-to-go functions
from the nodal price cost-to-go functions. 
Note that the arc price cost-to-go functions~$\bseqp{\nBellmanf{\week}{\ARC}\nc{\price}}{\week\in\WEEK}$
 {in~\eqref{eq:ArcPriceCostToGoFunction}} 
do not depend on the nodal storage levels $\state^{\node}$, and are therefore
constant for a given deterministic price decomposition process~$\price$. As a consequence,
they do not influence the global policies obtained from the proxies.

Once the transport price cost-to-go functions~$\nBellmanf{\week}{\ARC}\nc{\price}$ 
are computed for all $\week\in\WEEK$ using definition~\eqref{eq:ArcPriceCostToGoFunction},
and the nodal price cost-to-go functions $\nBellmanf{{\week}}{{\node}}\nc{\price^{\node}}$
are computed,
for all $\week\in\WEEK$ and $\node\in\NODE$, using the nodal 
dynamic programming equations~\eqref{eq:NodalPriceValueFunctionDynamicProgrammingHD},
we design global policies for the global optimisation problem~\eqref{eq:GlobalProblem} as follows.
First, we take for all $\week\in\WEEK$ and $\state=\nseqp{\state^{\node}}{\node\in\NODE}$,
a proxy for the global cost-to-go function {in~\eqref{eq:GlobalCostToGoHD}} 
as 
 \begin{align}
    \label{eq:GlobalCostToGoProxy}    {\nBellmanf{\week}{}}\nc{\price}\bp{\state} = 
    \sum_{\node\in\NODE} \nBellmanf{{\week}}{{\node}}\nc{\price^{\node}}\np{\state^{\node}} + 
    \nBellmanf{\week}{\ARC}\nc{\price} 
    \eqfinp
\end{align}
Second, this proxy is used to substitute $\nBellmanf{\week}{}$  
while solving a global forward pass for~$\week \in \WEEK$
using the global dynamic programming equations~\eqref{eq:GlobalDP_HD}.
Third, we obtain global policies for the global optimisation problem~\eqref{eq:GlobalProblem}
using Algorithm~\ref{alg:SimulationHD_multinode} 
with
\begin{align}
    \textit{cost-to-go}_{\week}\np{\state}=\sum_{\node\in\NODE} \nBellmanf{{\week}}{{\node}}\nc{\price^{\node}}\np{\state^{\node}} +
    \nBellmanf{\week}{\ARC}\nc{\price} \eqfinv \forall \week\in\WEEK\cup\na{\weeklast}  \eqfinp
\end{align}

Up to this point, we have discussed the bounds on the global cost-to-go functions
and how to obtain global policies from the nodal price value functions
for a given deterministic price decomposition process~$\price$. However,
 we have not yet discussed how to choose this latter~$\price$.

\subsection{Challenges in improving the price decomposition process}
\label{subsection:Challenges_in_improving_the_price_decomposition_process}

In this~\S\ref{subsection:Challenges_in_improving_the_price_decomposition_process},
we discuss how to improve the price decomposition 
process~\eqref{eq:PriceDecompositionProcess}
to obtain a better lower bound~\eqref{eq:GlobalProblem_NodesBound} for the global
 optimisation problem~\eqref{eq:GlobalProblem}.
We want to get a deterministic price decomposition process~$\price$
to compute univariate nodal price cost-to-go functions and 
transport price cost-to-go functions that are good enough to
design global policies.

In~\S\ref{subsubsection:How_to_improve_the_price_decomposition_process}, we present
an algorithm to improve the price decomposition process.
In~\S\ref{subsubsection:Large_size_of_the_price_decomposition_process},
we discuss the large size of the price decomposition process and present 
some options for reducing it, while maintaining a good quality of the results.

\subsubsection{How to improve the price decomposition process?}
\label{subsubsection:How_to_improve_the_price_decomposition_process}

We explain hereafter how to improve the price decomposition process~\eqref{eq:PriceDecompositionProcess}
to obtain a better lower bound~\eqref{eq:GlobalProblem_NodesBound} 
for the global optimisation problem~\eqref{eq:GlobalProblem}.
For this, we employ a gradient-like method to obtain a more refined 
price decomposition process~$\price$, without necessarily aiming for 
convergence (indeed, we recall that we are dealing with a deterministic
 price decomposition process~$\price$, whereas the optimal multipliers, 
 if any, are in general, random).

Due to the large dimension of the price decomposition process, 
we chose to use the {L-BFGS}~\cite{liu1989limited} implementation 
in Ipopt.jl~\cite{OnTheImplementationOfAnInteriorPointFilter}
 and, for that,
we define an oracle function that computes 
the value of the lower bound~$\ValueInf\nc{\price}$ in~\eqref{eq:GlobalPriceValueFunction-3}
and its gradient 
with respect to~$\price$ for a given value of the price decomposition process~$\price$.
The decomposition prices are updated until a stopping criterion is reached, and then 
we obtain a price that is later used to compute a proxy 
for the global cost-to-go functions using equations~\eqref{eq:GlobalCostToGoProxy}.
This procedure is illustrated in Figure~\ref{fig:priceUpdateScheme} and detailed
below.
\begin{figure}[htbp]
    \centering
    \includegraphics[width=0.8\textwidth]{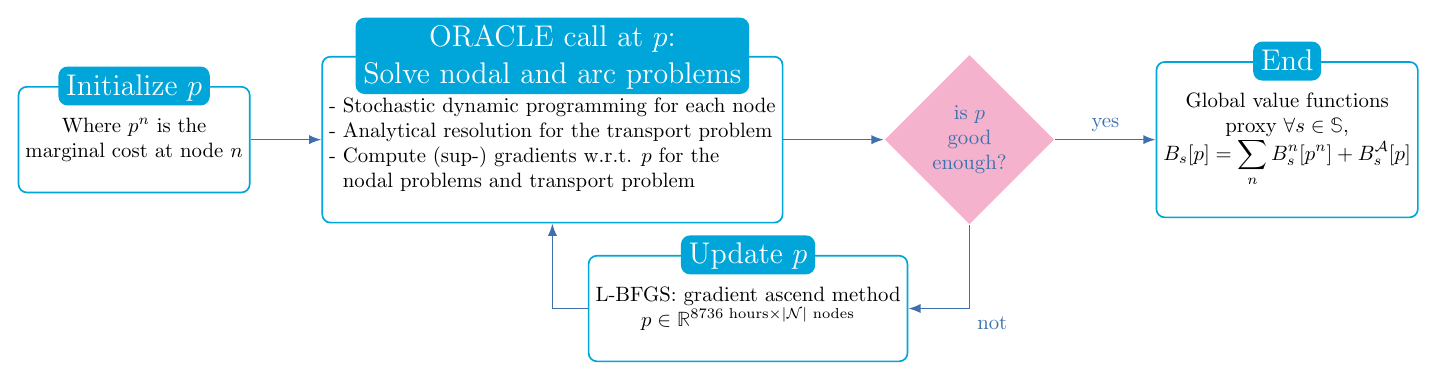}
  \caption{\small Price updates scheme}
    \label{fig:priceUpdateScheme}
\end{figure}
\subsubsubsection{Algorithm to improve the price decomposition process}
\label{subsubsection:Algorithm_to_improve_the_price_decomposition_process}
Our approach is inspired by the Uzawa algorithm~\cite{Carpentier-Cohen:2017_camila}, 
which has previously been applied in~\cite{pacaud2021distributed}. 
Since we consider a deterministic price decomposition process, 
we do not expect to obtain a zero duality gap, as the optimal multipliers are likely non-deterministic. 
Therefore, the objective is not to solve the dual problem to optimality, 
but rather to iteratively improve the deterministic price decomposition process. 
The algorithm is designed to improve the price decomposition process, without guaranteeing convergence.

\begin{remark} \label{remark:gap}
    While a lower bound for the global problem~\eqref{eq:GlobalProblem} is obtained via the Lagrangian 
    relaxation and spatial decomposition, an upper bound can be estimated through Monte Carlo simulations. 
    Specifically, we can use the proxy cost-to-go functions defined in~\eqref{eq:GlobalCostToGoProxy}—evaluated 
    at the resulting price decomposition process—to construct global policies. By simulating system operation 
    under these policies, we compute the expected operational cost, which serves as an upper bound for the 
    original problem. The gap between the lower bound~\eqref{eq:GlobalProblem_NodesBound} and this 
    simulated upper bound provides a practical measure of the quality of the decomposition
    and price decomposition process improvement. Note that  the lower bound is exact, 
    whereas the upper bound, based on Monte Carlo scenarios of noises, is statistical.
\end{remark} 

The main steps of the price decomposition process improvement algorithm are summarized below.
\begin{enumerate}
    \item \textbf{Initialization}:
        set the initial price decomposition process~$\price$.
    \item \textbf{ORACLE call at}~$\price$:        
            the oracle function
         \begin{subequations}\label{eq:OracleFunction}
            \begin{align}
                \oracle: \STOCK_{\winfhinf} \times \RR^{\cardinality{\HOUR}\times\cardinality{\WEEK}\times\cardinality{\NODE}}
                     \to \RR \times \RR^{\cardinality{\HOUR}\times\cardinality{\WEEK}\times\cardinality{\NODE}}
            \end{align}
            is given by 
             \begin{align}
                \oracle\bp{\stock_{\winfhinf}, \price} = 
                \left( \ValueInf\nc{\price}\bp{\stock_{\winfhinf}}, 
                \nabla_{\price}\ValueInf\nc{\price}\bp{\stock_{\winfhinf}} \right) \eqfinv
            \end{align}
            where 
            \begin{align} \label{eq:LagrangianDualValue_p}
              &  \ValueInf\nc{\price}\bp{\stock_{\winfhinf}} = 
                \sum_{\node\in\NODE} \nBellmanf{\binf{\week}}{{\node}}\nc{\price^{\node}}\bp{\stock^{\node}_{\winfhinf}} + 
                \Value^{\ARC}\nc{\price} \eqfinv
                \\
                  &  \nabla_{\price}\ValueInf\nc{\price}\bp{\stock_{\winfhinf}} = 
                    \sum_{\node\in\NODE} \nabla_{\price}\nBellmanf{\binf{\week}}{{\node}}\nc{\price^{\node}}\bp{\stock^{\node}_{\winfhinf}} + 
                    \nabla_{\price}\Value^{\ARC}\nc{\price} \eqfinp
                \end{align}
         \end{subequations}
            We now explain how we compute values and gradients. 
    \begin{enumerate}
        \item \textit{Nodal resolution of production problems (value and gradient)}:
             For each node~$\node\in\NODE$:
             \begin{itemize}
                \item   
                        The nodal value $\nBellmanf{\binf{\week}}{{\node}}\nc{\price^{\node}}$ by backward recursion 
                        using the dynamic programming equations~\eqref{eq:NodalPriceValueFunctionDynamicProgrammingHD}
                        . 
                \item   The nodal gradient $\nabla_{\price}\nBellmanf{\binf{\week}}{{\node}}\nc{\price^{\node}}$
                         by a backward recursion alongside the dynamic programming equations resolution
                      \begin{subequations} 
                        \label{eq:GradientNodalPriceValueFunction}
                        \begin{align}
                            & \nabla_{\price}\nBellmanf{{\weeklast}}{{\node}}\nc{\price^{\node}}\np{\state^{\node}} =  
                                            \nabla_{\price}\FinalCost^{\node}\np{\state^{\node}} \eqfinv
                            \\
                            &  \nabla_{\price}\nBellmanf{{\week}}{{\node}}\nc{\price^{\node}}\np{\state^{\node}} = 
                                \widehat{\espe} \Bigl[  \nabla_{\price^{\node}} \np{
                                            \price_{\week}^{\node} {\flownode_{\openWclosed{\week}}^{\node}}^* }
                                             \\
                                            &
                                    \qquad  \qquad \qquad \qquad 
                                          + \nabla_{\price^{\node}}\nBellmanf{{\week\successor}}{{\node}}\nc{\price^{\node}}\bp{\dynamics\np{\state^{\node},
                                            \va{\Uncertain}_{\openWclosed{\week}}^{\node},
                                        { {\recoursecontrol}_{\openWclosed{\week}}^{\node}}^*}}
                                                            \Bigr]
                                                            \eqfinv       \nonumber                 
                            \end{align}
                    \end{subequations}
                    where the optimal values of ${\flownode_{\openWclosed{\week}}^{\node}}^*$
                    and ${{\recoursecontrol}_{\openWclosed{\week}}^{\node}}^*$ 
                    for all $\week\in\WEEK$ are given by the solutions of the dynamic programming 
                    equations~\eqref{eq:NodalPriceValueFunctionDynamicProgrammingHD}
                    at state~$\state^{\node}$
                    and week~$\week$. The expectation $\widehat{\espe}$ 
                    is defined in Remark~\ref{remark:WeeklyProbability_multinode}
                    (Equation~\eqref{eq:weekly_probability_Expectation_multinode}).
            \end{itemize}           
          \item
            \textit{Transport problem resolution (value and gradient)}:
            \begin{itemize}
                \item $\Value^{\ARC}\nc{\price}$ is computed using
            Equation~\eqref{eq:ArcPriceValueFunction3}.
              \item $
                    \nabla_{\price}\Value_{\week}^{\ARC}\nc{\price} = 
                    -  \NODEARCMATRIX\transp \flowarc^*$, where~ $\flowarc^*$ is the optimal solution from Equation~\eqref{eq:ArcPriceValueFunction3}.
            \end{itemize}
            {The transport problem does not have temporal coupling and, therefore, the gradient
            can be computed directly from the optimal flow ${\FlowArc}$ from the resolution of 
            the arc price value function~\eqref{eq:ArcPriceValueFunction3} for a given
            price decomposition process~$\price$.}
    \end{enumerate}
  \item
    \textbf{Update} $\price$: {L-BFGS} algorithm implementation in Ipopt.
  \item
    \textbf{Stopping criterion}: the goal is not to solve the dual problem to optimality, 
    but to obtain a sufficiently good price decomposition process; 
    the algorithm terminates when the oracle value does not improve for two consecutive iterations.
  \item
    \textbf{Return}: the price decomposition process~$\price$ to compute 
    the nodal price cost-to-go functions and the transport price cost-to-go functions 
    for all $\week\in\WEEK$.
\end{enumerate}

\subsubsubsection{Comments on the price decomposition process improvement algorithm}
    The gradient for the nodal price value functions $\nabla_{\price}\nBellmanf{{\week}}{{\node}}\nc{\price^{\node}}$
    is computed using~\cite[Theorem 3.5]{franc2023differentiabilityregularizationparametricconvex}, where
    the authors show, under suitable regularity assumptions 
    (see details in~\cite[Assumptions 1, 2 and 3]{franc2023differentiabilityregularizationparametricconvex}), 
     that the gradient of a parametric convex value function
    can be computed, using the backward recursion in
    Equations~\eqref{eq:GradientNodalPriceValueFunction} alongside the
    dynamic programming equations resolution.
    We note that the gradient backward recursion in the referenced paper was 
    established for a decision-hazard information structure. However, since the 
    recursion is expressed in terms of the optimal control $\recoursecontrol^*$, the same 
    formulation can be used for the hazard-decision case, as long as $\recoursecontrol^*$ is computed
    according to the respective information structure.
    Although we cannot be sure to satisfy the assumptions recalled above, we however use the 
    ``gradient" computation as in~\cite{franc2023differentiabilityregularizationparametricconvex}.

    This approach using Equations~\eqref{eq:GradientNodalPriceValueFunction}
    potentially improves the performance with respect to what was done in the
    PhD thesis~\cite{pacaud:tel-02134163} where the gradient was 
    estimated from the expression
    \( 
        \nabla_{\price}\ValueInf\nc{\price}\bp{\stock_{\winfhinf}} = 
        \EE\left[ \va{\FlowNode^*} -\NODEARCMATRIX \flowarc^*\right] 
\),     using Monte Carlo simulations that can be computationally expensive.
    The values of~$\va{\FlowNode^*}$ are the optimal values 
    from Algorithm~\ref{alg:SimulationHD_multinode}.

    \subsubsubsection{Comments on the nodal problems resolution with SDP}
    We compute the nodal price cost-to-go functions $\nBellmanf{{\week}}{{\node}}\nc{\price^{\node}}$
    for all $\week\in\WEEK$ and $\node\in\NODE$,
    using a Stochastic Dynamic Programming-like approach obtaining the gradient
    directly in the same state discretization grid without need of simulations. 
    Since the nodal cost-to-go functions are
    computed once the global problem is decomposed into
    the nodal problems, the state dimension is small enough to use {SDP}. 
     Moreover, this framework allows for computing the nodal cost-to-go
     functions and their gradients in parallel for all nodes $\node\in\NODE$. 

    Unlike the standard numerical {SDP} approach, we do not discretize the control space.
    Instead, we formulate each weekly problem in the dynamic programming equations as a linear problem, 
    modelling the cost-to-go functions as piecewise linear functions. For each week $\week \in \WEEK$ and each
    state $\state^{\node}$ in the nodal state grid, we add a cut to the nodal cost-to-go function, in the same
    way as in the {SDDP} algorithm. This process yields a lower piecewise linear approximation 
    $\nBellmanfHat{{\week}}{{\node}}\nc{\price^{\node}}$ of the nodal cost-to-go functions 
    $\nBellmanf{{\week}}{{\node}}\nc{\price^{\node}}$, ensuring that we always have a lower bound
    for the global problem~\eqref{eq:GlobalProblem}.

\begin{remark}\label{remark:nodalResolution_SDDP}
    The nodal price cost-to-go function $\nBellmanf{{\week}}{{\node}}\nc{\price^{\node}}$
    can be computed using {SDDP}, but, in that case, the gradient should be estimated
    using Monte Carlo simulations as in~\cite{pacaud:tel-02134163} and the quality
    of the gradient would depend on the number of scenarios used in the simulations. 
    The use of expressions~\eqref{eq:GradientNodalPriceValueFunction} with a {SDDP} resolution
    of the nodal price value functions would require enlarging the {SDDP} state dimension
    and to include the 
    price decomposition process~$\price$ as in the numerical application
     in~\cite{franc2023differentiabilityregularizationparametricconvex}. 
    However, in this case, the nodal price cost-to-go functions are concave with 
    respect to~$\price$ (and convex with respect to the storage levels), 
    making it not possible to directly apply {SDDP}.      
\end{remark}

\subsubsubsection{Comments on the choice of the L-BFGS algorithm}

We chose the {L-BFGS}~\cite{liu1989limited} algorithm for updating the price decomposition process due to 
its efficiency in handling large-scale optimization problems with many variables. 
{L-BFGS} is a quasi-Newton method that approximates the inverse Hessian matrix using limited memory,
making it well-suited for the high-dimensional price vector encountered in our spatial decomposition approach. 
This choice allows us to iteratively improve the price decomposition process without incurring prohibitive
computational or memory costs. We remark that the {L-BFGS} algorithm is designed to handle 
unconstrained and smooth optimization problems. As we model the nodal price cost-to-go functions as 
piecewise linear functions, we do not expect the objective function to be smooth.
However, when considering a hazard-decision information structure,
the nodal price cost-to-go functions are computed as the expectation
of piecewise linear functions, which induces a smoothing effect in the objective function~\cite{wets1974stochastic}.
An alternative approach to overcoming this difficulty is to utilize a non-smooth optimization algorithm 
for updating the price decomposition process. However, finding an algorithm that can efficiently handle 
the non-smoothness of the objective function in the high-dimensional price decomposition process remains 
a significant challenge, which is beyond the scope of this work. For this reason, even if we are not under 
the hypothesis required to obtain convergence from {L-BFGS}, we still use it in practice, showing its 
effectiveness in improving the price decomposition process.

\bigskip
Summing up, 
the improvement of the price decomposition process is performed
iteratively using a gradient-like method, which relies on the
oracle~$\oracle\np{\price}=\bp{  \ValueInf\nc{\price}\bp{\stock_{\winfhinf}} ,
\nabla_{\price}\ValueInf\nc{\price}\bp{\stock_{\winfhinf}} } $.
However, in practical numerical implementations,
the dimensionality of the price decomposition process can become a significant challenge.
As its size increases, even {L-BFGS} may struggle to handle the optimization efficiently.
In~\S\ref{subsubsection:Large_size_of_the_price_decomposition_process},
we discuss an approach to address this challenge.

\subsubsection{Large size of the price decomposition process}
\label{subsubsection:Large_size_of_the_price_decomposition_process}
We discuss in this~\S\ref{subsubsection:Large_size_of_the_price_decomposition_process} 
the challenges posed by the large size of the price decomposition process
 and potential strategies to address them.

 Due to the hourly granularity of the coupling constraint~\eqref{eq:CouplingConstraint},
  the size of the price decomposition process~\eqref{eq:PriceDecompositionProcess}
 is proportional to the number~$\cardinality{\HOUR}\times\cardinality{\WEEK}=8736$
  of hours in the year, and this 
 becomes a challenge even for a deterministic price decomposition process~$\price$. 
 Recalling also that  the coupling constraint~\eqref{eq:CouplingConstraint} is an energy
  balance constraint at  each node in the system (Kirchhoff's law between nodes and arc flows),
  the size of the price decomposition process  is proportional to the number of nodes in the system. 
  Therefore, when increasing the system size, the size of the price decomposition process grows linearly 
  with the number of nodes. This could be a setback to the {DADP} method, which handles well the oracle 
  step of the price decomposition process improvement (the nodal computations being done in parallel), 
  but that could be limited by the size of the price decomposition process due to memory issues. 

To address this, we consider a strategy that involves aggregating the spatial 
coupling constraints~\eqref{eq:CouplingConstraint} into time blocks that 
are smaller than or equal to one week.
We examine two cases. 
\begin{itemize}
    \item \textbf{Prices each $\mathbb{\hour}$ hours:} we aggregate the coupling constraints
    in blocks of  $\mathbb{\hour}$ hours and, therefore, the size of the price decomposition process
    is proportional to the number~$\mathbb{N_{\hour}}$ of $\mathbb{\hour}$-hours blocks in the week,
     the number of weeks in the year and the number of nodes in the system.
    \item \textbf{Weekly prices:} we aggregate the coupling constraints
    in weekly blocks and, therefore, the size of the price decomposition process
    is proportional to the number (52) of weeks in the year and the number of nodes in the system.
\end{itemize}
Note that the second case is a particular case of the first one, where $\mathbb{\hour} = 168$ hours.
It is important to highlight that the number (168) of hours in the week must be divisible by
the value chosen for $\mathbb{\hour}$, that is, it should divide the week into
$\mathbb{N_{\hour}} =\cardinality{\HOUR} / \mathbb{\hour} $ blocks
of length $\mathbb{\hour}$ hours. 
Each block is denoted by $\mathbb{\hour}_i$ for $i\in\ic{1,\mathbb{N_{\hour}}}$, and the 
set of hours in the week $\HOUR$ is composed of the concatenation
    $\HOUR = \bc{\mathbb{\hour}_1, \mathbb{\hour}_2, \ldots, \mathbb{\hour}_{\mathbb{N_{\hour}}}}$
of these blocks.

From an energy system perspective, this means that the nodal marginal cost in the system is 
stationary during the $\mathbb{\hour}$-hour blocks.  A similar approach was already used by 
Hawkins in his PhD thesis~\cite{Hawkins:2003} but, in that case, the multiplier was static for 
the entire optimization timespan.

Note that, when considering this coupling constraints aggregation, 
we are actually considering different optimisation problems that 
are relaxations of the original one presented in Equations~\eqref{eq:GlobalProblem},
where the constraint~\eqref{eq:globalcoupling} is substituted by
    $
          \sum_{\hour\in\mathbb{\hour}_i}
  \bp{\va{\FlowNode}_{\wh\successor}
  -
  \NODEARCMATRIX \va{\FlowArc }_{\wh\successor}} = 0
  \eqsepv \forall i\in\ic{1,\mathbb{N_{\hour}}} \eqsepv \forall \week\in\WEEK
$.
We remark that we do not modify the temporal structure of the problem, 
where the decisions, uncertainties, and most constraints (except for the spatial coupling)
are still defined in the hourly timescale, but we only aggregate the spatial coupling constraints
in blocks of $\mathbb{\hour}$ hours and, therefore, the price decomposition 
process~${\price}$, introduced when the spatial coupling constraints are relaxed, 
has a size reduced from $\bp{\cardinality{\HOUR}\times\cardinality{\WEEK}\times\cardinality{\NODE}}$
to $\bp{\mathbb{N_{\hour}}\times\cardinality{\WEEK}\times\cardinality{\NODE}}$.
 As a consequence of these aggregations, when solving the node and arc problems,
several hours of the week have an identical price.

In the numerical studies for a multinode system in Section~\ref{section:Modelling_and_numerical_study_in_multinode_systems},
we consider aggregations with
$\mathbb{\hour}=8$ and $\mathbb{\hour}=168$ hours (weekly prices), and we compare the results
obtained with the case where the coupling constraints are not aggregated
(i.e., $\mathbb{\hour}=1$ hour).

\section{Modelling and numerical study in multinode systems}
\label{section:Modelling_and_numerical_study_in_multinode_systems}

We aim to evaluate the performance of the {DADP} method,
as developed in Section~\ref{section:Mixing_spatial_and_temporal_decomposition},
in a large and complex system,
focusing on its scalability and effectiveness in handling increased dimensionality.

This thirty-node system introduces a high complexity due to high number of nodes and interconnections.
This setting also features a large heterogeneity, with some nodes lacking storage units,
and substantial variations in installed capacities and demand profiles across the network.
Due to the high dimensionality of the state space in this thirty-node system,
it is not possible to perform an exhaustive pointwise comparison of the usage values derived
from different methods. However, we focus on aspects that are meaningful at a large scale:
computational effort, the quality of the obtained lower bounds,
and statistical upper bounds. 
We therefore do not attempt a pointwise multivariate usage-value comparison 
for every node; instead, we report simulation-based policy performance (operational cost and energy not served (ENS)) 
and present storage trajectories only for a selected set of representative nodes.  
This focused evaluation compares the scalability of {DADP} (with hourly,  
8‑hour, and weekly price aggregation schemes), against {SDDP}.

In~\S\ref{subsection:Objectives_and_methodological_considerations},
    we outline the objectives and methodological considerations
    made for this numerical study.
    In~\S\ref{subsection:Thirty_node_system_description}, we describe the thirty-node system,
    detailing the market zones, interconnections, and a summary of the system components at each node.
    In~\S\ref{subsection:Numerical_results_for_the_thirty_node_system}, we present the numerical results obtained
    using the {DADP} method with hourly, 8-hour, and weekly price aggregation,
    comparing them with the {SDDP} method in terms of computation times, lower bounds,
    statistical upper bounds, and the empirical indicators
    in simulation.

\subsection{Objectives and methodological considerations}
\label{subsection:Objectives_and_methodological_considerations}

In~\S\ref{subsubsection:Choice_of_software}, we detail the software and implementations used
for the numerical experiments.
In~\S\ref{subsubsection:Uncertainties_modelling}, we describe the models used for the uncertainties
in the thirty-node system.
In~\S\ref{subsubsection:Lower_bounds_computing_times_and_statistical_upper_bounds}, 
we present the performance indicators
used to compare the four methods studied during this numerical experiment.
Finally, in~\S\ref{subsubsection:Comparison_between_policies}, 
we explain how we compare the policies obtained with the different methods
in simulation using the RTE reference uncertainties chronicles.

\subsubsection{Choice of software and implementations}
\label{subsubsection:Choice_of_software}
All implementations are carried out in Julia~\cite{bezanson2017julia}, 
using the JuMP.jl modelling language~\cite{dunning2017jump} with Gurobi as the solver.

\phantomsection
\subsubsubsection{Stochastic Dynamic Programming ({SDP})} 
Custom implementations are used for {SDP} backward recursions to compute the univariate 
cost-to-go functions in the nodal resolution of the {DADP} oracle
  (Equations~\eqref{eq:NodalPriceValueFunctionDynamicProgrammingHD}).

    In this implementation, we formulate each weekly problem in the dynamic programming equations 
    as a linear problem, where the cost-to-go functions are modelled as
    piecewise linear functions. For each week $\week\in\WEEK$ 
    and each state~$\state$ in the state grid, 
    we add a cut to the cost-to-go function, in the same way as in {SDDP} algorithm.
    By doing this, we obtain a lower piecewise linear approximation of the  cost-to-go functions.

\subsubsubsection{Improvement of the price decomposition process}
The improvement of the price decomposition process presented
in~\S\ref{subsubsection:Algorithm_to_improve_the_price_decomposition_process} 
is implemented using the limited memory algorithm (L-BFGS) 
from Ipopt.jl package~\cite{OnTheImplementationOfAnInteriorPointFilter}.
We choose to use a stopping criterion
based on the improvement 
of the lower bound: the algorithm terminates after two consecutive 
iterations with an absolute improvement of less than 100~\euro\ 
(approximately $5 \times 10^{-7}\%$ relative to the lower bound). 

\subsubsubsection{Parallelization strategy}
The nodal backward recursions {in~\eqref{eq:NodalPriceValueFunctionDynamicProgrammingHD}
 and~\eqref{eq:GradientNodalPriceValueFunction}}
(value and gradient, respectively) for the DADP method were computed independently
for different nodes in parallel, using 16~threads.
Nodes without storage, as well as smaller nodes with storage but limited computational 
requirements (e.g., with only few thermal clusters of dispatchable units), were grouped for sequential processing.
The price decomposition process size grows proportionally to the number of nodes,
regardless of whether a node includes storage or not. 
As a result, we encountered 
numerical challenges when implementing 
the {DADP} method with hourly prices, as the price decomposition process became 
too large to be handled efficiently, and the oracle computations became too time-consuming.
This limitation was primarily due to the parallelization strategy used in our implementation, 
which did not scale well with the expanded size of the price decomposition process. Nevertheless,
we were able to run the {DADP} method with hourly prices for a limited number of iterations, 
but did not achieve satisfactory results compared with the {DADP} 
approaches with 8-hour and weekly price aggregation.

\subsubsubsection{Stochastic Dual Dynamic Programming ({SDDP})}
For the global multivariate {SDDP} resolution, we used the {SDDP}.jl package~\cite{dowson_2021_sddp.jl}, 
with as a stopping criterion the improvement of the lower bound with the same tolerance as in Ipopt for {DADP} 
(less than 100~\euro, approximately $5 \times 10^{-7}\%$ relative to the lower bound).

\subsubsubsection{Simulation of policies}
Custom implementations are used for the simulation Algorithm~\ref{alg:SimulationHD_multinode}.
These implementations can handle the multivariate cost-to-go functions obtained with
{SDDP} as well as the univariate cost-to-go functions obtained with {DADP}.

\subsubsection{Models for the uncertainties} 
\label{subsubsection:Uncertainties_modelling} 

The uncertainties at each node {in~\eqref{eq:uncertainty_process_multinode_hour}} 
are given by the net demand (nodal demand load minus 
    the non-dispatchable production), the availability 
    of dispatchable production (expressed in percentage of maximum power) 
    and the inflows into storages (a single aggregated dam per node).
    To model the uncertainties, we have taken a finite number of uncertainty chronicles
    (see definition in Remark~\ref{remark:empirical_probability_multinode})
    provided by {RTE}, with uniform probability. 
    Each uncertainty chronicle is, for each $\node\in\NODE$, a three-dimensional time series
    (net demand, dispatchable production availability, and storage inflows) consisting of a sequence
    of 52 weekly uncertainty vectors,
    each containing 168 hourly observations.

Because we want, on the one hand, to compute cost-to-go functions and, on the other hand, to use them in simulation, 
we consider two subsets of these chronicles as follows.
Note that the intersection of the two subsets is not necessarily empty.

\phantomsection
\subsubsubsection{Set of uncertainty chronicles for cost-to-go functions computing: $\MCHRONICLE$}

We consider, for the computing of the cost-to-go functions, uncertainty chronicles
    $\np{\uncertain}^{\mchronicle} = \bp{ \np{\uncertain_{\openWclosed{\weekinf}}}^{\mchronicle},
                                  \dots,
                                        \np{\uncertain_{\openWclosed{\week}}}^{\mchronicle}, 
                                  \dots,
                                        \np{\uncertain_{\openWclosed{\weeksup}}}^{\mchronicle}}$,
    \begin{subequations}
      with $\np{\uncertain_{\openWclosed{\week}}}^{\mchronicle}=
     \bp{\nseqp{\uncertain_{\openWclosed{\week}}^{\node}}{\node\in\NODE}}^{\mchronicle}$
     for all $\week \in \WEEK$, and $\mchronicle\in\MCHRONICLE$.
     The set $\MCHRONICLE$ is chosen according to a study carried out by {RTE} data scientists, 
who selected a subset of representative scenarios from the full set of uncertainty chronicles. 

      We define the empirical product probability~$\widehat{\prbt}$ (see remark~\ref{remark:WeeklyProbability_multinode}) 
        \begin{align}  \label{eq:weekly_probability_Prbt_multinode_Mchronicles}
          \widehat{\prbt} = \bigotimes_{\week \in \WEEK} \frac{1}{\cardinality{\MCHRONICLE}} 
            \sum_{\mchronicle \in \MCHRONICLE} 
          \delta_{\np{\uncertain_{\openWclosed{\week}}}^{\mchronicle}}
          \eqfinp
        \end{align}
      The set of weekly uncertainty chronicles 
      $\Bseqp{\bseqp{\np{\uncertain_{\openWclosed{\week}}}^{\mchronicle}}{\mchronicle\in\MCHRONICLE} }{\week\in\WEEK}$
      and the empirical product probability $\widehat{\prbt}$ are used to compute the cost-to-go functions and
      their associated nonanticipative policies.
      The expectations in the dynamic programming equations, that depend on the weekly uncertainties, 
      are computed as finite sums
      \begin{equation}
          \widehat{\espe}
          \nc{\zeta(\va{\Uncertain}_{\openWclosed{\week}})} = 
                \frac{1}{\cardinality{\MCHRONICLE}} 
            \sum_{\mchronicle \in \MCHRONICLE} 
            \zeta\bp{\np{\uncertain_{\openWclosed{\week}}}^{\mchronicle}}
            \eqfinv 
          \end{equation}
      where $\np{\uncertain_{\openWclosed{\week}}}^{\mchronicle}$ is a weekly uncertainty 
      chronicle and $\frac{1}{\cardinality{\MCHRONICLE}}$ its corresponding probability.
\end{subequations}

\subsubsubsection{Set of uncertainty chronicles for evaluating policies: $\PCHRONICLE$}

        We consider, for the evaluation of the policies derived from the cost-to-go functions, 
        the uncertainty chronicles~$\np{\uncertain}^{\pchronicle} = \bp{\np{\uncertain_{\openWclosed{\weekinf}}}^{\pchronicle},
                                    \dots,\np{\uncertain_{\openWclosed{\week}}}^{\pchronicle}, 
                                    \dots,\np{\uncertain_{\openWclosed{\weeksup}}}^{\pchronicle}}$,
        with $\np{\uncertain_{\openWclosed{\week}}}^{\pchronicle}=
        \bp{\nseqp{\uncertain_{\openWclosed{\week}}^{\node}}{\node\in\NODE}}^{\pchronicle}$
        for all $\week \in \WEEK$ , and $\pchronicle\in\PCHRONICLE$.

        We define the empirical probability~$\widetilde{\prbt}$ (see remark~\ref{remark:empirical_probability_multinode})
        \begin{align}\label{eq:year_probability_Prbt_multinode_Pchronicles}
            \widetilde{\prbt} = \frac{1}{\cardinality{\PCHRONICLE}} 
            \sum_{\pchronicle \in \PCHRONICLE} 
                \delta_{\bp{\np{\uncertain_{\openWclosed{\weekinf}}}^{\pchronicle},
                 \dots,
                 \np{\uncertain_{\openWclosed{\week}}}^{\pchronicle}, 
                 \dots,
                 \np{\uncertain_{\openWclosed{\weeksup}}}^{\pchronicle}}} 
                 \eqfinp
        \end{align}

    The set of weekly uncertainty chronicles 
    $\Bseqp{\bseqp{\np{\uncertain_{\openWclosed{\week}}}^{\pchronicle}}{\week\in\WEEK}}{\pchronicle\in\PCHRONICLE}$
    is used to simulate the system operation
    when applying the policies derived from the cost-to-go functions
    for the four methods studied. 

    We refer to the set $\PCHRONICLE$ as the \emph{RTE reference uncertainty chronicles}
    and they are used for the comparison between policies.
    The expected values for the empirical indicators
    are computed using the empirical probability $\widetilde{\prbt}$.

\begin{remark}
    \label{remark:yearly_timespan}
    It is common practice in prospective studies to set the year horizon so that 
    the winter period is not split across two years.
    Therefore, we consider a year with 52 weeks, starting in the first week of July 
    and ending in the last week of June of the following year.
    Note that, as the year is composed of 52~weeks, the studied timespan is 364~days
    (instead of 365 or 366~days in a leap year).
\end{remark}

\subsubsection{Algorithms assessment}
\label{subsubsection:Lower_bounds_computing_times_and_statistical_upper_bounds}

For each studied resolution method, we compare the following indicators computed using 
 the empirical product probability~$\widehat{\prbt}$ {in~\eqref{eq:weekly_probability_Prbt_multinode_Mchronicles}}:
\begin{itemize}
    \item Lower bound obtained from the cost-to-go calculation phase,
    \item Computation time for the cost-to-go calculation phase,
    \item Statistical upper bound,
    \item Statistical gap between the lower and the statistical upper bounds.
\end{itemize}

All four methods yield a lower bound. In the {SDDP} cases, the bound is the value of the initial cost-to-go function
at the initial state $\state_0$, and in the {DADP} case it is the value of the Lagrangian dual function at the resulting
price vector~$\price$ after applying the improvement algorithm {in~\eqref{eq:LagrangianDualValue_p}}.

A statistical upper bound is obtained for the four methods by simulating the policies derived 
from the cost-to-go functions. To remain consistent with the use of the empirical product 
probability~$\widehat{\prbt}$ {in~\eqref{eq:weekly_probability_Prbt_multinode_Mchronicles}}
 in computing the cost-to-go functions, the statistical upper
bound is computed using a sample drawn from~$\widehat{\prbt}$. In practice, drawing from the 
empirical product probability~$\widehat{\prbt}$ means obtaining the sample by concatenating 
weekly uncertainty chronicles, 
$\bseqp{{\np{\uncertain_{\openWclosed{\week}}}^{\mchronicle}}}{\week\in\WEEK}$, 
sampled i.i.d. according to $\widehat{\prbt}$. We report the expected operational cost,
the 95\% confidence interval, and the statistical gap across 1000 samples. 
Note that a truly robust statistical assessment would require more samples, since the number of possible 
sequences of weekly chronicles equals $\cardinality{\MCHRONICLE}^{\cardinality{\WEEK}} = 10^{52}$.

Comparing the lower and statistical upper bounds provides an indication of how the different 
algorithms performed on the data used to design them, as {SDDP} and {DADP} are terminated using 
custom stopping criteria that may not be optimal. In particular, for {DADP}, the price-improvement 
algorithm is unlikely to converge to an optimal price vector, since the price decomposition 
process is deterministic. In an ideal world, the lower and upper bounds should be equal.
The statistical gap is computed as the relative difference between the statistical upper bound 
and the lower bound.

\subsubsection{Comparison between policies}
\label{subsubsection:Comparison_between_policies}

We evaluate how the proxy cost-to-go functions obtained with 
the {DADP} method behave when used to design global policies for prospective studies  
for each information structure. To this end, we consider 1000 {RTE} reference uncertainty chronicles 
(see~\S\ref{subsubsection:Uncertainties_modelling}), which account for uncertainties introduced by 
net demand, the availability of dispatchable production, and inflows into storages. Then, we carry 
out simulations over the {RTE} reference uncertainty chronicles using the policies derived from 
 {SDDP}  and {DADP} (hourly, 8-hour, and weekly prices). 
For each method, we use Algorithm~\ref{alg:SimulationHD_multinode},
with the corresponding cost-to-go functions.

The indicators compared in simulation are the \emph{empirical expected operational cost},
the \emph{empirical expected energy not supplied} ({ENS}),
and the \emph{storage trajectories} for France to illustrate some key differences.
The {ENS} is a crucial indicator for prospective studies, as it offers insights 
into the reliability of the energy supply and the potential need for additional resources or
infrastructure investments. Moreover, due to the high penalty set for {ENS} (3000~\euro/MWh),  
{ENS} is a good indicator for assessing the effectiveness of a storage policy,  
as usage values should reflect the value of stored energy in preventing {ENS}.

\subsection{Thirty-node system description}
\label{subsection:Thirty_node_system_description}

In Figure~\ref{fig:marketZonesMap}, we illustrate the market zones
considered in this study, 
along with the representative interconnections between them.
Nodes equipped with long-term storage (modelled as a single aggregated storage) are highlighted in blue.
\begin{figure}[h!]
    \centering
    \includegraphics[width=0.45\textwidth]{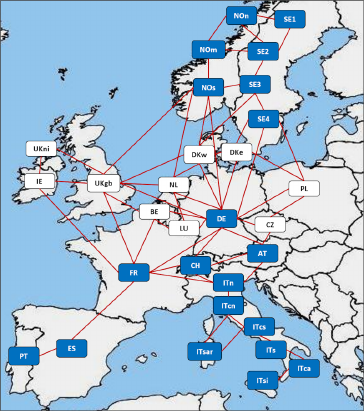}
  \caption{\small Market zones and their interconnections. In blue, the nodes with long-term storage.}
    \label{fig:marketZonesMap}
\end{figure}
The interconnections between nodes are modelled as transport links with specified capacities. 
Note that there are nodes with many connections, as well as nodes with few connections, 
resulting in heterogeneity in the network structure.
In Table~\ref{Table:30_system_summary}, we present a 
summary of the system components aggregated by regions,
including the storage capacity (if any),
the number of clusters of dispatchable units and the total dispatchable capacity installed
(details per node are not disclosed due to confidentiality).
\begin{table}[h!]
    \centering
    \begin{tabular}{lccc}
    \hline
    Region & \begin{tabular}{c} Storage \\ capacity \\ (GWh) \end{tabular}& \begin{tabular}{c}Number of \\ dispatchable \\ clusters\end{tabular} & \begin{tabular}{c} Dispatchable \\ capacity  \\(GW) \end{tabular} \\
    \hline
        Southern Europe	&	34~946	&	66	&	177~240 \\ 
        Northern Europe	&	11~311	&	82	&	154~248 \\
        Scandinavia	    &	121~508	&	44	&	15~209 \\
        British Isles 	&	 -- 	&	38	&	58~355 \\
                                \hline
    \end{tabular}
  \caption{\small  Summary of the 30-node system components aggregated by region. 
    Nodes in Southern Europe: ES, PT, FR, ITca, ITcn, ITcs, ITn, ITs, ITsar and ITsic.
    Nodes in Northern Europe: AT, BE, CH, CZ, DE, LU, NL and PL.
    Nodes in Scandinavia: DKe, DKw, NOm, NOn, NOs, SE1, SE2, SE3, and SE4.
    Nodes in British Isles: IE, UKgb and UKni.
    Details per node are not reported for reasons of data confidentiality.}
    \label{Table:30_system_summary}
\end{table}

In addition to the presence of nodes without storage,
the system exhibits significant heterogeneity in terms of the number of clusters of dispatchable units,
 installed capacities,
 and storage sizes across nodes. 
 The nodes with significantly  large long-term storage facilities are those in Norway, Sweden, 
 France, Spain, and Switzerland.
  This diversity reflects realistic differences in market zones and impacts both
 the complexity and the coordination required in the optimization process.

\begin{remark} \label{remark:Final_storage_level_constraint_30nodes}
    It is common practice in prospective studies to impose a final storage level constraint at the end of 
    the studied timespan,
    typically implemented as a penalty in each nodal optimization problem ($ \FinalCost^{\node}$ 
    in Equations~\eqref{eq:GSFWeeklyHD-1}). 
    This penalty is activated when the storage at 
    the end of the horizon falls below a specified threshold which, in this case, 
    is set to the initial condition~$\stock_{\winfhinf}^{\node}$. 
    Since exceeding the desired threshold does not provide any economic benefit, we do not penalize storage 
    levels above the target.
    This practice is followed in this study, during both the calculation of usage value
    and in the simulation for all four methods studied.

    {The penalty is modeled as piecewise linear function with two segments:}
    \begin{align}
       \FinalCost^{\node}\bp{\state^{\node}}=
        \begin{cases}
            150~\text{\euro/MWh} \times \bp{\stock_{\winfhinf}^{\node} - \state^{\node}}
            & \text{if } \state^{\node} < \stock_{\winfhinf}^{\node} \eqfinv
            \\
              0
            & \text{if } \state^{\node} \geq\stock_{\winfhinf}^{\node} \eqfinp                       
        \end{cases}
    \end{align}
    The numerical value of 150~\euro/MWh for the penalty rate was determined
    so that it is preferable to use thermal generation rather than depleting storage below the initial level
    at the end of the year, 
    but to still keep the penalty moderate compared to the high cost of energy not served (3000~\euro/MWh).
\end{remark} 

\subsection{Numerical results for the thirty-node system}
\label{subsection:Numerical_results_for_the_thirty_node_system}

In this~\S\ref{subsection:Numerical_results_for_the_thirty_node_system}, 
we present the numerical results obtained for the 30-node system.
We compare the {DADP} method with the three price aggregation schemes
(hourly, 8-hour, and weekly prices) against the {SDDP} method. 

The size of the price decomposition process varies depending on the price aggregation scheme. 
Specifically, the dimensions of the price decomposition process are 262~020 for hourly prices,
32~760 for 8-hour prices, and 1~560 for weekly prices.

\subsubsection{Algorithms assessment}
\label{subsubsection:Computing_times_and_lower_bounds_30nodesHD}
We first present the assessment of the different algorithms
in terms of lower bounds, computation times, statistical upper bounds, and gaps.

\phantomsection
\subsubsubsection{Lower bounds and computing times}
In Table~\ref{table:30nodesHD_computingTimes_and_Iterations}, we report the computation times and lower bounds
for each method.
\begin{table}[h!]
    \centering
    \begin{tabular}{lcccc}
    \hline
                Method  & Iterations & Lin. search & Time (h) & Lower Bound (B\euro) \\
    \hline
\hline
                {SDDP}\footnotemark[1]                 & 250 &   --        & 24.0                &    45.175 \\
                \hline
                {DADP} hourly prices\footnotemark[1] & 72  &    122      &  24.0               &  44.696 \\
                {DADP} 8-hour prices   & 128  &    141     &  11.7               &    44.924 \\
                {DADP} weekly prices    & 112  &    278     &  15.0               &    44.810 \\
        \hline 
    \end{tabular} 
  \caption{\small Number of iterations, number of linear searches, computation times and lower
    bound for different solution methods in the 30-node system.
        }
        \footnotetext[1]{{SDDP} and {DADP} hourly prices were both terminated due to the time limit.}
    \label{table:30nodesHD_computingTimes_and_Iterations}
\end{table} 
Note that the multivariate {SDDP} method and the univariate {DADP}
method with hourly prices were terminated due to the 24-hour time limit, 
rather than because they met the stopping criterion.
However, the lower bound achieved by {SDDP} after 24 hours is the highest among all methods.
The {DADP} method with 8-hour prices is the fastest,  completing in 11.7 hours,
while the {DADP} method with weekly prices takes 15 hours. 
Both approaches produce lower bounds slightly below {SDDP}'s, differing by 0.55\% and 0.83\%, respectively.

When comparing the {DADP} methods in Table~\ref{table:30nodesHD_computingTimes_and_Iterations},
we observe that the hourly prices scheme takes the longest,
with the lowest number of iterations (72) and linear searches (122), and, 
as a consequence, it does not 
reach the stopping criterion. 
Consequently, the quality of the lower bound is reduced.
This limitation could be attributed to the thread-based parallelization currently implemented in our code,
which becomes inefficient when the price decomposition process has a large dimension.
To address this issue in the hourly price scheme, a distributed computing framework would be more suitable.
Since the nodal problems are independent, they can be solved in parallel on separate processors.
A message-passing interface (MPI) can be used to broadcast the hourly prices to each node and to gather the
associated nodal costs and gradients.

\subsubsubsection{Statistical upper bounds and gaps}
In Table~\ref{table:30nodesHD_statisticalUpperBounds} we present the statistical upper bound
(see Remark~\ref{remark:gap}) obtained by simulation (see Algorithm~\ref{alg:SimulationHD_multinode})
across 1000 Monte Carlo scenarios sampled
according to the empirical product probability in~\eqref{eq:weekly_probability_Prbt_multinode_Mchronicles}. 
The results use the policies derived from each method and include the associated 95\% confidence intervals.
For completeness, we also report each method's lower bound from Table~\ref{table:30nodesHD_computingTimes_and_Iterations}.
\begin{table}[h!]
    \centering
    \begin{tabular}{lccc}
        \hline
                    Method ({HD}) 
                    & \begin{tabular}{c}Lower Bound\\(B\euro)\end{tabular}  
                    & \begin{tabular}{c}Statistical \\ upper bound\\(B\euro)\end{tabular}
                    &\begin{tabular}{c}Statistical\\gap\end{tabular} \\
        \hline
        \hline
                {SDP}                                    & N/A & N/A   &  N/A \\
                {SDDP}\footnotemark[1]                  & 45.175 & { 45.919 ± 0.057}  & 1.65\% \\
                \hline
                {DADP} hourly  prices\footnotemark[1]   & 44.696 & 45.891 ± 0.087   & 2.67\% \\
                {DADP} 8-hour prices                    & 44.924 & 45.477 ± 0.060   & 1.23\% \\
                {DADP} weekly prices                    & 44.810 & 45.567 ± 0.062   & 1.69\% \\
        \hline
    \end{tabular} 
  \caption{\small Lower bound, statistical upper bound (with confidence intervals) and statistical gap 
    (see Remark~\ref{remark:gap}) for different resolution methods.
     {SDDP} and {DADP} hourly prices were both terminated due to the time limit. 
    }
    \label{table:30nodesHD_statisticalUpperBounds}
  \footnotetext[1]{{SDDP} and {DADP} hourly prices were both terminated due to the time limit.}
\end{table}

The statistical upper bounds obtained with the four methods are relatively close, 
with a maximum difference of 0.442 B\euro, which corresponds to less than 1\% of the best upper bound.

The {DADP} method with hourly prices, which did not reach the stopping criterion, 
yields the highest statistical upper bound among the {DADP} methods, 
with a gap of 2.67\% relative to its lower bound.
The gap obtained with {SDDP} is smaller than that of {DADP} with hourly prices, 
despite {SDDP} also being terminated due to the time limit.
The best statistical gap is achieved by the {DADP} method with 8-hour prices, 
with a gap of 1.23\% relative to its lower bound.
It is followed by {SDDP} and {DADP} with weekly prices,
showing gaps of 1.65\% and 1.69\%, respectively.
Although the {DADP} method with hourly prices achieves a 2.67\% gap,
it is 
roughly twice as large as the gap of the {DADP} method with 8-hour prices.
This suggests that the hourly price scheme struggles to handle the increased complexity of the larger system, 
likely due to limitations in the current parallelization strategy.

Overall, the gaps reported for the 30-node system in Table~\ref{table:30nodesHD_statisticalUpperBounds} 
are tight. The increased complexity and diversified cost structure 
of the large system provide a smooth optimization landscape and facilitate the approximation
of the cost-to-go functions.
Nevertheless, to draw more definitive conclusions, a larger number of Monte Carlo scenarios would be required, 
as the current analysis is based on only $10^{3}$ scenarios out of the $10^{52}$ possible ones.

\subsubsection{Comparison between policies across the {RTE} reference uncertainty chronicles}
\label{subsection:Policies_comparison_30nodesHD}
It is important to emphasize that the quality of usage values cannot be fully 
assessed in isolation. Moreover, in a complex system with 20 storage units,
assessing the impact of individual usage values on the overall system performance is not straightforward.
Therefore, we focus on assessing the performance of policies constructed from 
these usage values by simulating system operation across 1000 {RTE} reference uncertainty chronicles
 and comparing the resulting 
operational costs and performance indicators with those obtained with policies derived from {SDDP}.

In Table~\ref{table:30nodesHD_opCostENS}, we summarize the results of these simulations,
reporting the empirical expected operational cost (in B€) and the expected energy not supplied ({ENS} in GWh).
These empirical expectations are computed using the empirical probability~$\widetilde{\prbt}$
 in Equation~\eqref{eq:year_probability_Prbt_multinode_Pchronicles}.
\begin{table}[h!]
        \centering
        \begin{tabular}{lcc}
                \hline
            Method                     & Operational cost (B\euro) &   {ENS} (GWh) \\
            \hline
\hline
            {SDDP} \footnotemark[1]                             & 45.628                   & 99.59  \\
            \hline
            {DADP} hourly prices\footnotemark[1]            & 46.530                      &  489.54\\
            {DADP} 8-hour prices             &  45.489                         &  124.93  \\
            {DADP} weekly prices              &  45.645                      &   160.44  \\
            \hline
        \end{tabular}
      \caption{\small {Expected operational costs, and expected energy not supplied ({ENS})
            for four different resolution methods. 
            {SDDP} and {DADP} hourly prices were both terminated due to the time limit. 
            }
        }\label{table:30nodesHD_opCostENS}
        \footnotetext[1]{{SDDP} and {DADP} hourly prices were both terminated due to the time limit.}
    \end{table}
The {DADP} method with hourly prices that did not reach the stopping criterion
yields the highest operational cost.
{DADP} with 8-hour prices achieves an empirical expected
operational cost that is slightly lower than that of {SDDP} (45.489 B\euro vs. 45.628 B\euro),
with usage values computed in less than half the time (11.7 hours vs. 24 hours).
The mean {ENS} for {DADP} with 8-hour prices is 25\% higher than that of {SDDP} (124.93 GWh vs. 99.59 GWh),
but it does not affect the mean operational cost.

The operational costs reported in Table~\ref{table:30nodesHD_opCostENS} consist of two components:
the thermal generation cost and the energy not supplied ({ENS}) cost.
Table~\ref{table:Thermal_ENS_costs_30HD} reports the expected thermal generation cost, the expected 
{ENS} cost, and the expected operational cost for the four methods studied.
\begin{table}[h!]
        \centering
        \begin{tabular}{lccc}
                \hline
            Method                     & Thermal cost (B\euro) & {ENS} cost (B\euro)    & Operational cost (B\euro) \\
            \hline
\hline
            {SDDP}                              & 45.329            & 0.299       & 45.628  \\
            \hline
            {DADP} hourly prices               &  45.061            & 1.469       &  46.530\\
            {DADP} 8-hour prices               &  45.114            & 0.375       &  45.489  \\
            {DADP} weekly prices               &  45.164            & 0.481       &  45.645  \\
            \hline
        \end{tabular}
      \caption{\small {Thermal, {ENS} and operational cost, for four different resolution methods.
        }
        }\label{table:Thermal_ENS_costs_30HD}
    \end{table}
    When inspecting the thermal generation and {ENS} costs reported in Table~\ref{table:Thermal_ENS_costs_30HD},
    we observe that {DADP} with 8-hour prices yields a lower thermal cost than {SDDP} but a higher {ENS} cost.
    This results in a slightly lower total operational cost for the {DADP} 8-hour scheme, indicating a different trade-off
    between thermal generation and {ENS} in the resulting policies.

\subsubsubsection{Costs}
To further analyze the performance of these methods, Figure~\ref{Figure:costDistribution30HD}
presents the operational cost distributions
resulting from the simulation across the {RTE} reference uncertainty chronicles.
\begin{figure}[h!]
    \centering
    \begin{subfigure}{0.4\textwidth}
           \centering
                \includegraphics[width=\textwidth]{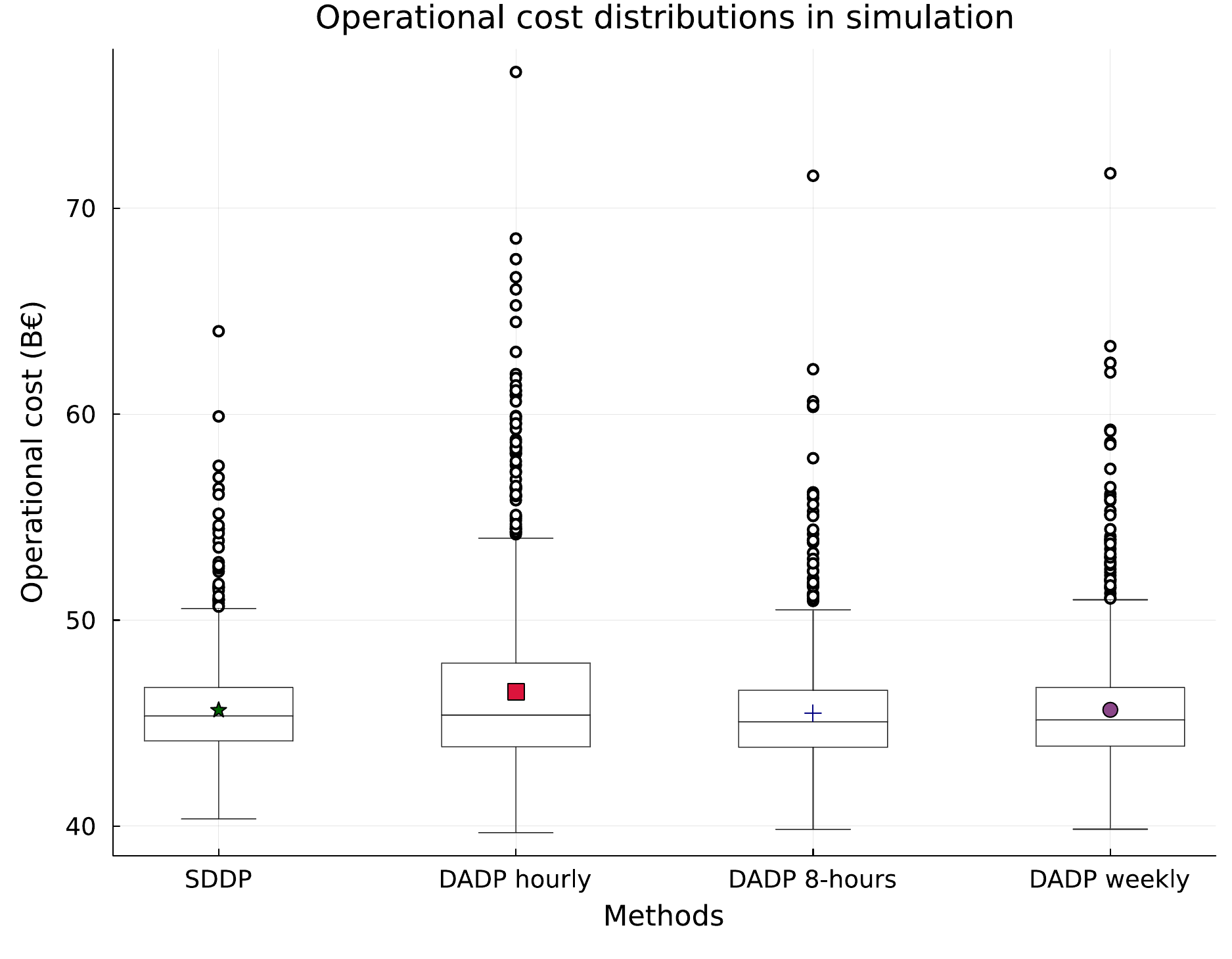}
              \caption{\small \footnotesize{Cost distributions with outliers}
                \label{fig:costDistributionOutliers30HD}}
        \end{subfigure}
        \begin{subfigure}{0.4\textwidth}
            \centering
            \includegraphics[width=\textwidth]{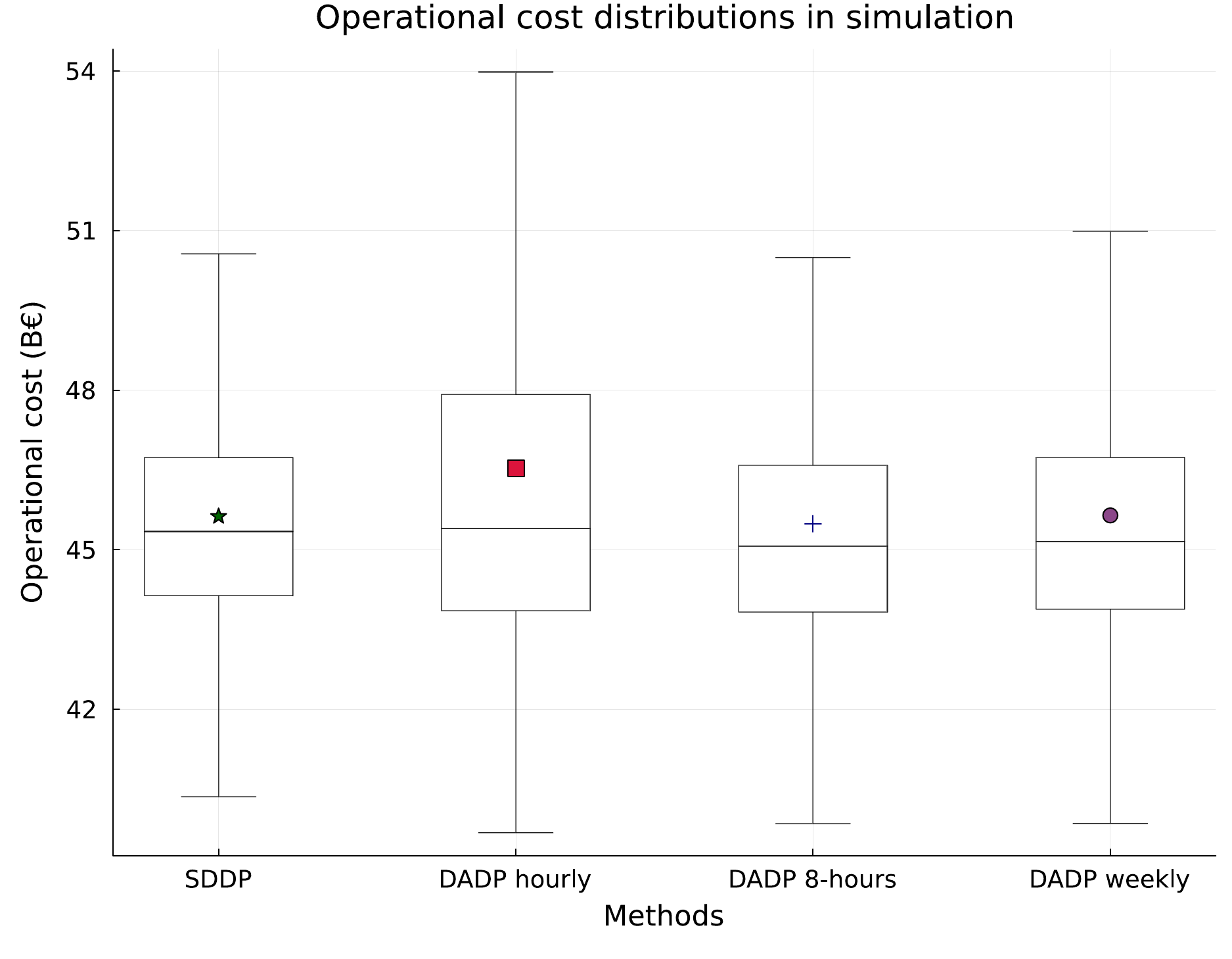}
          \caption{\small \footnotesize{Cost distributions without outliers}
            \label{fig:costDistributionNoOutliers30HD}}
        \end{subfigure}
        \includegraphics[width=0.8\textwidth]{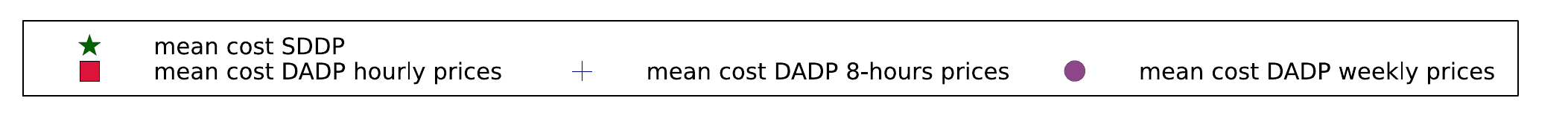}
      \caption{\small Operational cost distributions
        for the 30-node system under the weekly hazard-decision information structure, including the mean cost value for each method.
        The distributions are reported with and without outliers (\ref{fig:costDistributionOutliers30HD} and~\ref{fig:costDistributionNoOutliers30HD} respectively).}
        \label{Figure:costDistribution30HD} 
    \end{figure} 

\begin{remark}
  A boxplot (also known as a box-and-whisker plot) is a graphical tool used to summarize
  the distribution of a dataset. It displays the median, the first and third quartiles
  (Q1 and Q3), and potential outliers. The box represents the interquartile range 
  (IQR), which contains the middle 50\% of the data. 
  The line inside the box indicates the median. 
  The whiskers extend to the smallest and largest values within 1.5 
  times the IQR from Q1 and Q3, respectively. 
  Data points outside this range are considered outliers and are plotted individually.
  Boxplots provide a concise visual summary of the central tendency, spread,
  and presence of outliers in the data.
\end{remark}

For the four methods studied, Figures~\ref{fig:costDistributionOutliers30HD} and~\ref{fig:costDistributionNoOutliers30HD}
show the operational cost distributions with and without outliers, respectively, for the 30-node system.
The distributions include the mean cost value (markers) and the lower bound (dashed lines) for each method.
As expected, the {DADP} method with hourly prices exhibits a wider distribution of operational costs,
with a few scenarios leading to very high costs, as reflected in the outliers.
This outcome is consistent with expectations, given that this method did not reach the stopping criterion.

Despite this behaviour for {DADP} with hourly prices, the other two {DADP} methods (8-hour and weekly prices)
exhibit operational cost distributions that are quite similar to that of the {SDDP} method,
with lower median values and only a few outliers leading to higher costs.
Regarding the number of outliers observed in the operational cost distributions, 
we obtain 32~outliers for {SDDP}, 38~outliers for {DADP} with 8-hour prices, and 43~outliers for {DADP} with weekly prices.
Notably, the operational cost distributions for {SDDP} and {DADP} with 8-hour prices are very similar.

\subsubsubsection{Energy not supplied ({ENS})}
In Figure~\ref{Figure:ensDistribution30HD}, we present the {ENS} distributions
resulting from the simulation across the {RTE} reference uncertainty chronicles, for the four methods studied.

\begin{figure}[h!]       
    \centering
    \begin{subfigure}{0.4\textwidth}
           \centering
                \includegraphics[width=\textwidth]{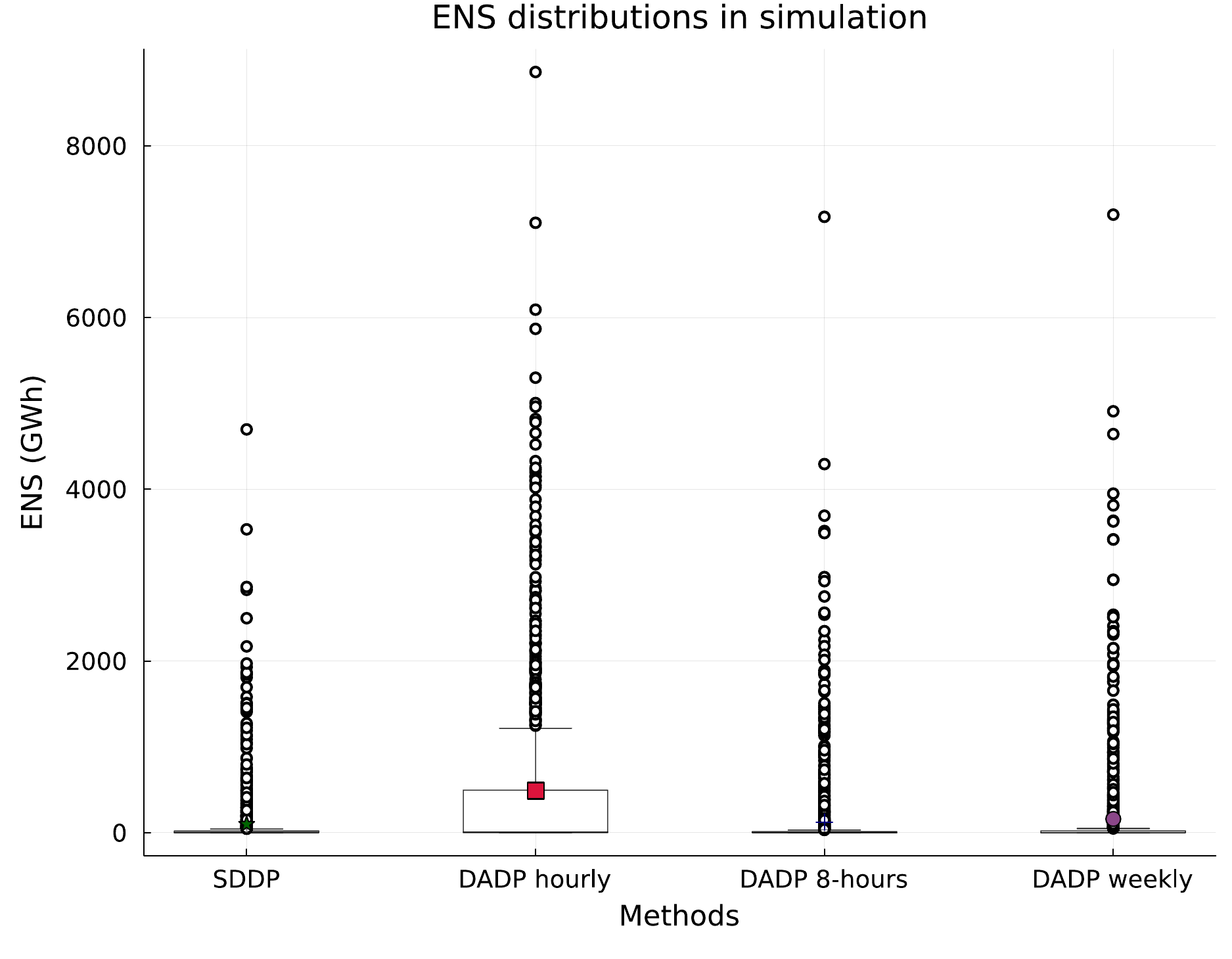}
              \caption{\small \footnotesize{{ENS} distributions with outliers}
                \label{fig:ensDistributionOutliers30HD}}
        \end{subfigure}
        \begin{subfigure}{0.4\textwidth}
            \centering
            \includegraphics[width=\textwidth]{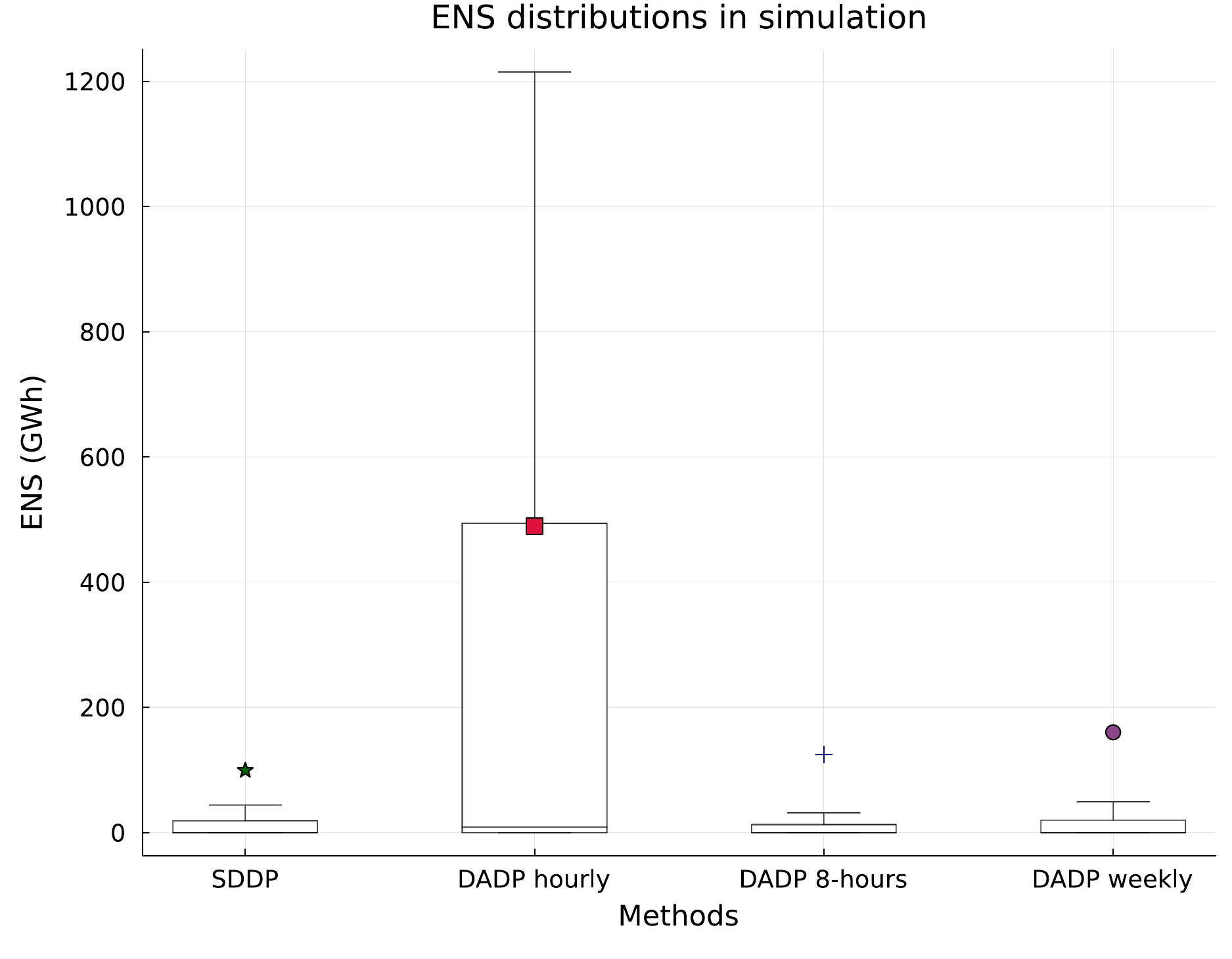}
          \caption{\small \footnotesize{{ENS} distributions without outliers}
            \label{fig:ensDistributionNoOutliers30HD}}
        \end{subfigure}
        \includegraphics[width=0.8\textwidth]{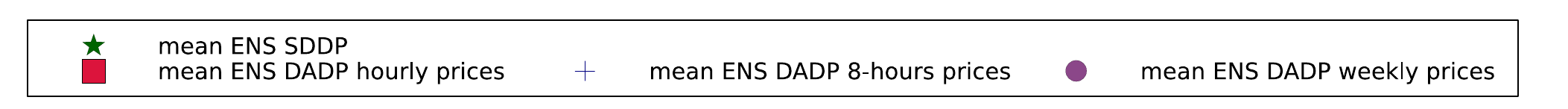}
      \caption{\small {ENS} distributions
        for the 30-node system,
        including the mean {ENS} value for each method.
         The distributions are reported with and without outliers (\ref{fig:ensDistributionOutliers30HD} and~\ref{fig:ensDistributionNoOutliers30HD} respectively).}
        \label{Figure:ensDistribution30HD}
    \end{figure}
Figures~\ref{fig:ensDistributionOutliers30HD} and~\ref{fig:ensDistributionNoOutliers30HD}
show the {ENS} distributions with and without outliers, respectively, for the 30-node system.
The distributions include the mean {ENS} value (markers) for each method.
Again, the {DADP} method with hourly prices exhibits a wider distribution of {ENS},
with a few scenarios leading to very high {ENS} values, as reflected in the outliers.
Even excluding outliers, the {DADP} method with hourly prices still shows a wider {ENS} distribution
compared with the other methods.

For the number of outliers observed in the {ENS} distributions, we obtain 197~outliers for {SDDP}, 
149~outliers for {DADP} with hourly prices, 186~outliers for {DADP} with 8-hour prices, and 168~outliers for {DADP} 
with weekly prices. As {DADP} with hourly prices exhibits a wider {ENS} distribution, it is not 
surprising that it has the lowest number of outliers. In terms of operational cost distributions, 
the {DADP} method with 8-hour prices exhibits an {ENS} distribution comparable to that of {SDDP}, 
with fewer outliers; however, these outliers are more widely dispersed than those observed for {SDDP}.

Due to the large dispersion of the {ENS} values for the {DADP} method with hourly prices,
 it is hard to compare the distributions 
of the other three methods (SDDP, {DADP} with 8-hour and weekly prices). Therefore, we present in 
Figure~\ref{Figure:ensDistribution30HD_noHourly} the {ENS} distributions without outliers, 
excluding the {DADP} method with hourly prices.
\begin{figure}[h!]       
    \centering
            \centering
            \includegraphics[width=0.5\textwidth]{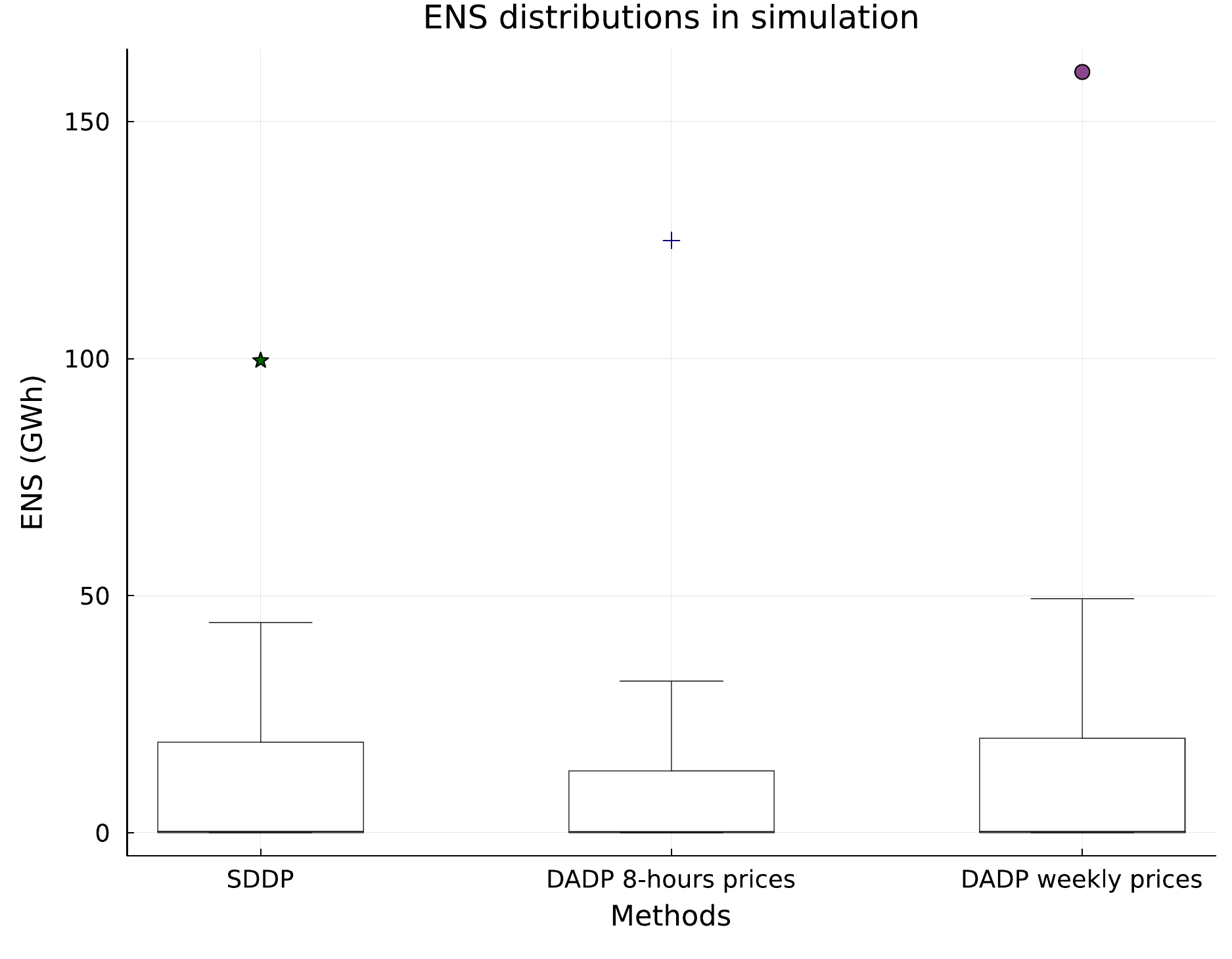}
        \includegraphics[width=0.5\textwidth]{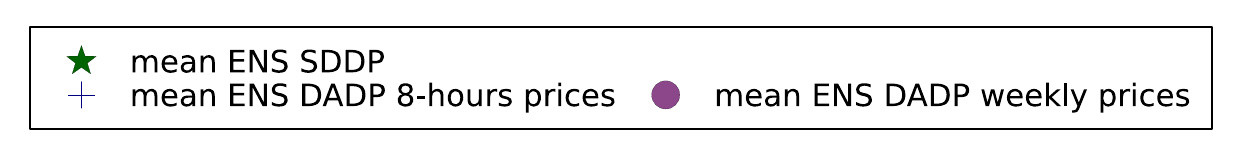}
      \caption{\small {ENS} distributions without outliers
        for the 30-node system under the weekly hazard-decision information structure,
        including the mean {ENS} value for {SDDP}, {DADP} with 8-hour prices, and {DADP} with weekly prices.}
        \label{Figure:ensDistribution30HD_noHourly}
    \end{figure}

We observe that the {ENS} distributions for the {SDDP} method and the {DADP} methods with 8-hour and weekly prices are quite 
similar, with the 8-hour prices scheme being the one with less dispersion of {ENS} values.
However, when observing the mean {ENS} values (markers in Figure~\ref{Figure:ensDistribution30HD_noHourly}),
as reported in Table~\ref{table:30nodesHD_opCostENS}, we see that the {DADP} method with 8-hour prices
leads to a mean {ENS} of 124.93 GWh, which is higher than the mean {ENS} of 99.59 GWh obtained with the {SDDP} method.
We observe in Figure~\ref{fig:ensDistributionNoOutliers30HD} that the three distributions are close to the zero value,
 indicating that in most scenarios the {ENS} is low.

\subsubsubsection{Storage trajectories}

Finally, we present the annual
storage trajectories with 52 values of storage levels for France 
to illustrate some of the differences in the storage operation induced by the different policies
(see Algorithm~\ref{alg:SimulationHD_multinode} for details on how trajectories are computed).

In Figure~\ref{Figure:StorageTrajectories_30HD1}, we present the storage trajectories 
 across the {RTE} reference uncertainty chronicles,  using the policies derived from
 {SDDP} (green), {DADP} with hourly prices (red), {DADP} with 8-hour prices (blue), and {DADP} with weekly prices
    (purple).
\begin{figure}[h!]
    \centering
    \begin{subfigure}[c]{0.45\textwidth}
        \includegraphics[width=\columnwidth]{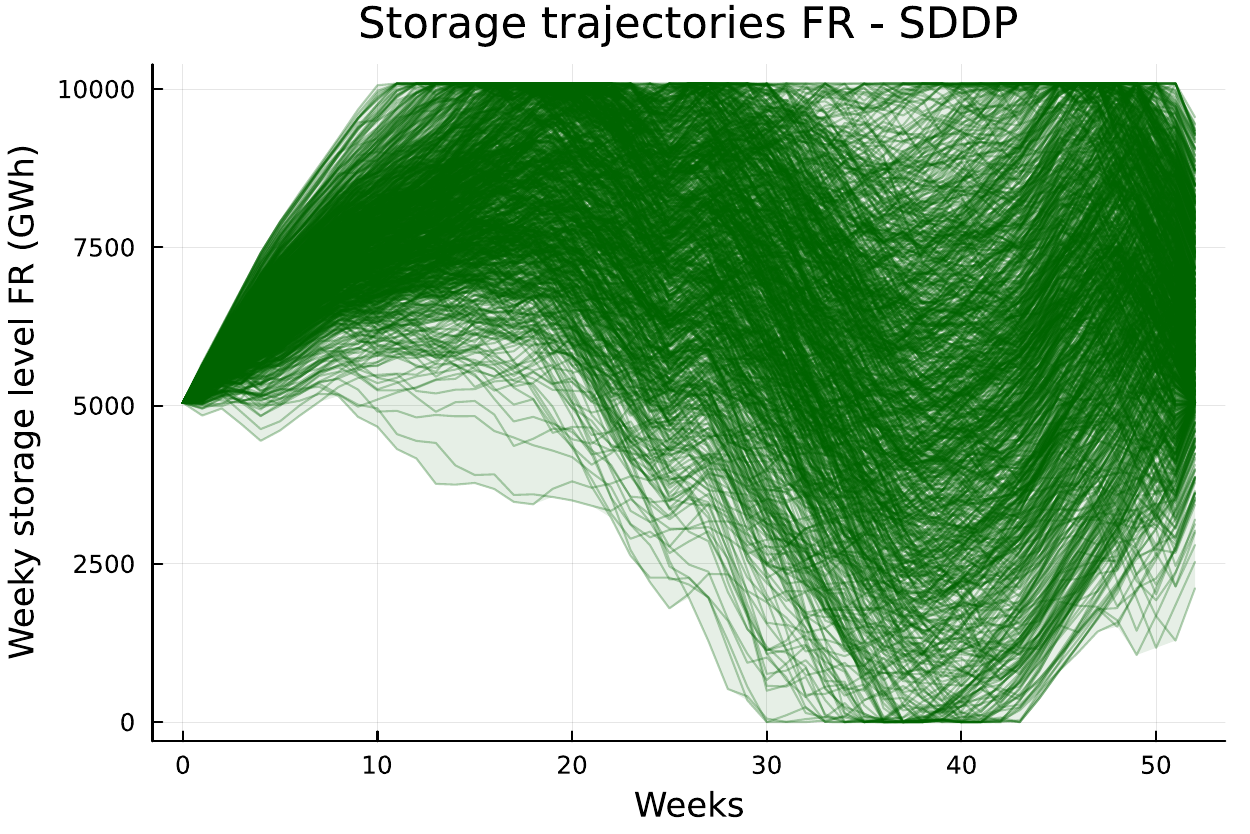}
      \caption{\small \footnotesize{Storage trajectories for the {SDDP} method}}
        \label{fig:StorageTrajectories_SDDP_30HD1}
    \end{subfigure}
    \hspace{0.02\textwidth}
    \begin{subfigure}[c]{0.45\textwidth}
        \includegraphics[width=\columnwidth]{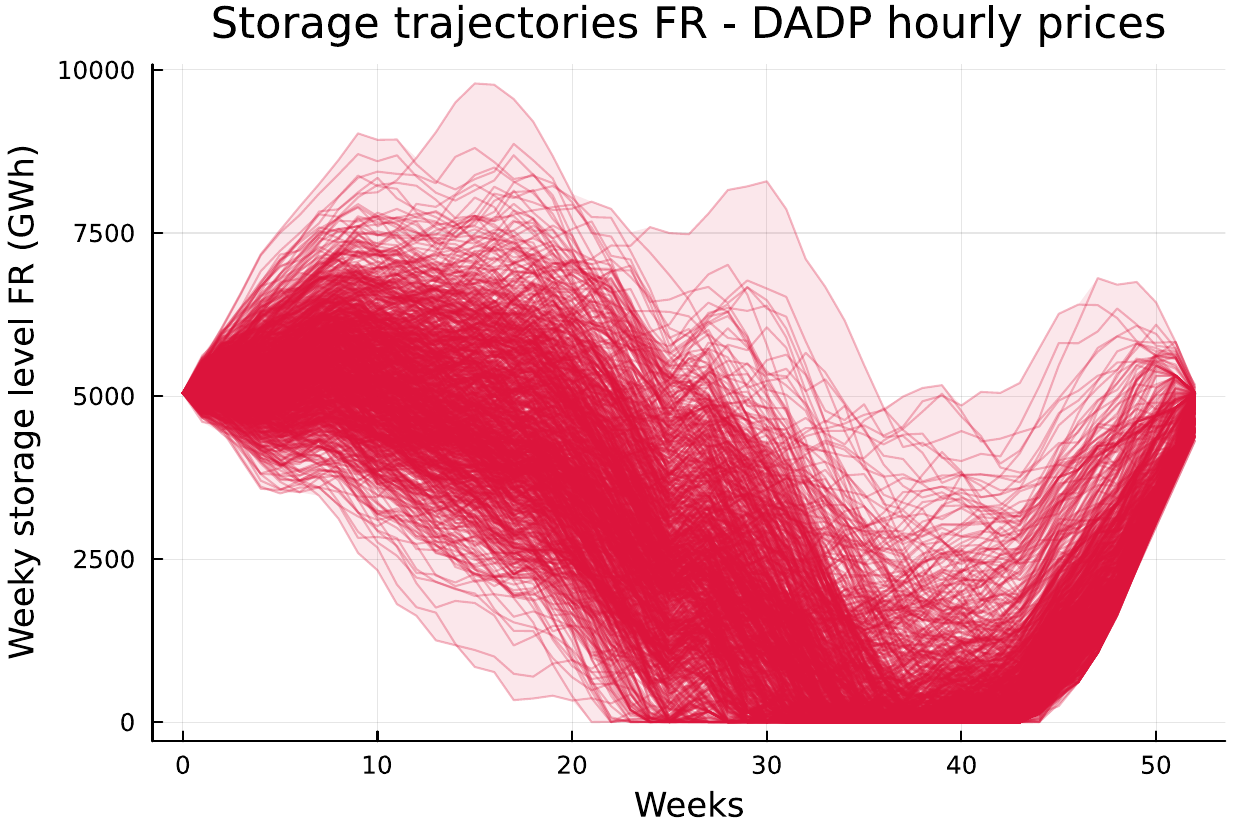}
      \caption{\small \footnotesize{Storage trajectories for the {DADP} method with hourly prices}}
        \label{fig:StorageTrajectories_DADPPhourly_30HD1}
    \end{subfigure}
    \begin{subfigure}[c]{0.45\textwidth}
        \includegraphics[width=\columnwidth]{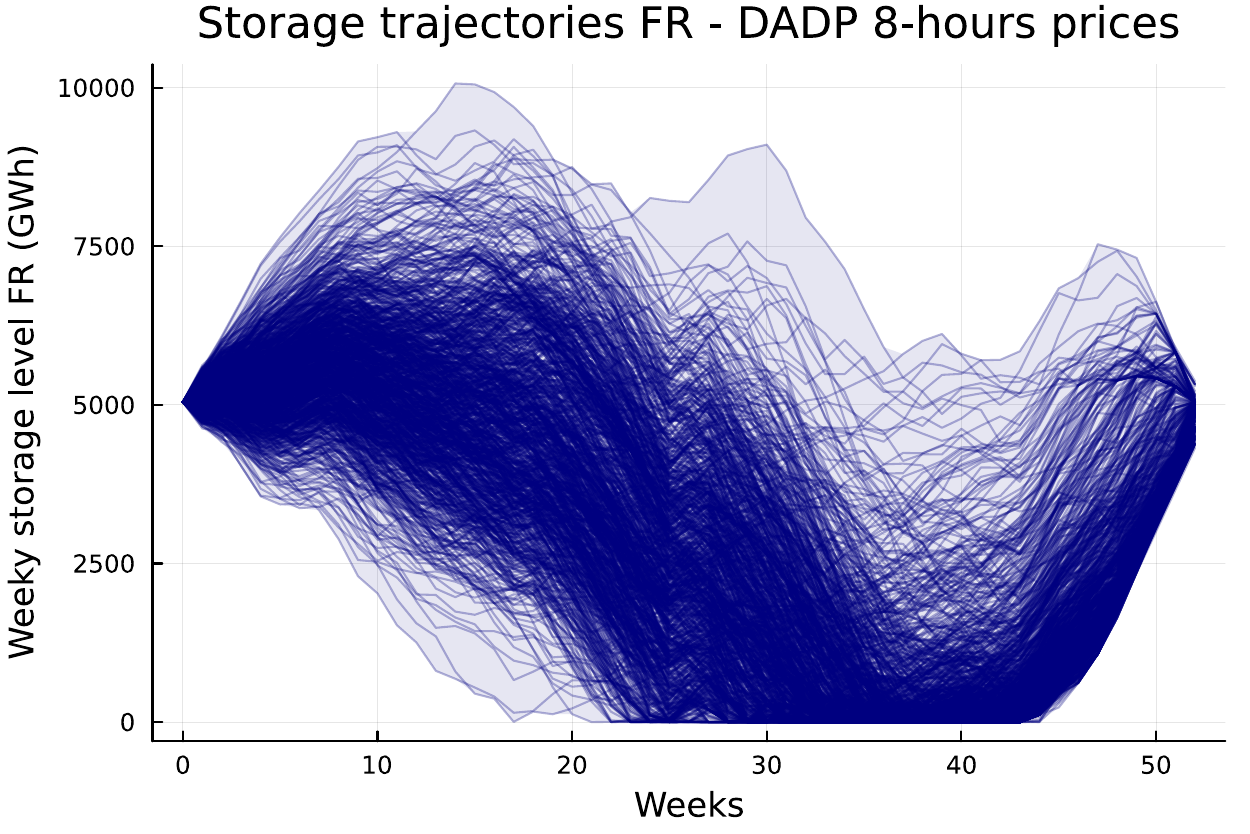}
      \caption{\small \footnotesize{Storage trajectories for the {DADP} method with 8-hour prices}}
        \label{fig:StorageTrajectories_DADP8Hours_30HD1}
    \end{subfigure}
    \hspace{0.02\textwidth}
    \begin{subfigure}[c]{0.5\textwidth}
        \includegraphics[width=\columnwidth]{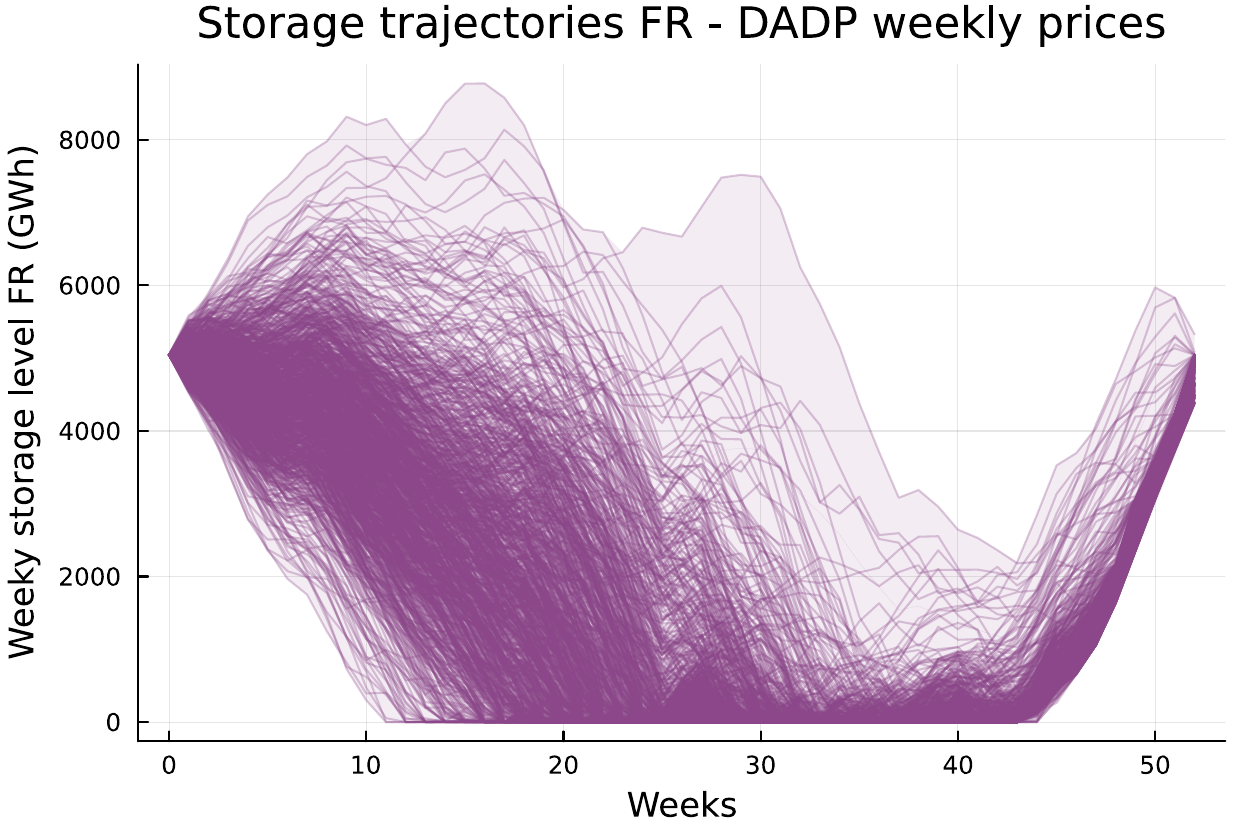}
      \caption{\small \footnotesize{Storage trajectories for the {DADP} method with weekly prices}}
        \label{fig:StorageTrajectories_DADPWeekly_30HD1}
    \end{subfigure}
  \caption{\small Storage trajectories for France,
     across {RTE} reference  scenarios of uncertainty chronicles,
     using the policies derived from
     {SDDP}, {DADP} with hourly prices, {DADP} with 8-hour prices, and {DADP} with weekly prices (top to bottom)}
     \label{Figure:StorageTrajectories_30HD1}
\end{figure}
Each subplot in Figure~\ref{Figure:StorageTrajectories_30HD1} reports 1000~trajectories of storage (in GWh).
The shaded region illustrates the range of storage levels observed across all simulated scenarios,
representing the minimum and maximum values attained during the simulation period. 
This provides a visual summary of the variability in storage trajectories resulting from different policies. 
Individual lines within the region correspond to specific storage level trajectories for each {RTE} reference
 uncertainty chronicle.

As presented in Remark~\ref{remark:Final_storage_level_constraint_30nodes},
a final penalty (150\euro/MWh) 
in the event of a final storage level below the target (50\% of the storage capacity) was incorporated both
during the usage value calculation and in 
the simulation for all four methods studied.
We observe in Figure~\ref{Figure:StorageTrajectories_30HD1}
that the impact of this penalty varies significantly across the different methods,
with the number of scenarios failing to meet the target differing from one approach to another. 
The reasons why a simulation scenario fails to meet the final storage target can be diverse, 
including inaccurate cost-to-go functions in the last weeks of the year, 
which makes it impossible to achieve 
the target at the end due to previous decisions. In other cases, the inflows are higher or lower than 
expected, and even with accurate usage values, the target cannot be attained. 

We observe a mayor dispersion in the final storage levels for the {SDDP} method
(Figure~\ref{fig:StorageTrajectories_SDDP_30HD1}) compared with the three {DADP} methods
(Figures~\ref{fig:StorageTrajectories_DADPPhourly_30HD1},~\ref{fig:StorageTrajectories_DADP8Hours_30HD1},
and~\ref{fig:StorageTrajectories_DADPWeekly_30HD1}).
As {SDDP} only explores a subset of relevant points in the three-dimensional state space, it may
 miss combinations of storage levels that are critical to correctly incorporating the final penalty 
 into usage values, leading to higher dispersion in the final storage levels in simulations.  
 By contrast, {DADP} methods do not explore the three-dimensional state space; however, 
 they perform an exhaustive exploration of the local one-dimensional state space, 
 allowing them to capture the effects of the penalty locally. This results in a lower dispersion 
 of final storage levels compared with {SDDP}.
This effect is also observed for other large storage nodes in the system, 
such as Norway and Sweden (not shown here).

In general, {SDDP} storage trajectories are higher than those of {DADP},
which is consistent with the lower {ENS} observed for {SDDP} in Table~\ref{table:30nodesHD_opCostENS},
as energy stored helps to avoid {ENS}. However, this does not translate into lower operational costs,
as the mean operational cost for {DADP} with 8-hour prices is slightly lower than that of {SDDP}.
This observation is consistent with the results reported in Table~\ref{table:Thermal_ENS_costs_30HD},
which show that the methods manage thermal generation, storage usage, and {ENS} differently while
producing comparable total costs. This suggests the existence of multiple solutions that, 
although significantly different in terms of trajectory, 
are all nearly optimal for managing the system.

\section{Conclusion and perspectives}
\label{section:Conclusion_and_perspectives}
 We have presented a numerical study of the {DADP} method on a 30-node
 electrical energy system, benchmarking it against the {SDDP} approach under the weekly hazard-decision
 information structure. The results have shown that {DADP} and SDDP are comparable
 in terms of lower bounds and simulation performance, with {DADP} being faster.

 We have observed that the {DADP} method with hourly prices did not meet the stopping criterion
 within the given time limit, due to the inefficiency of the thread-based parallelization currently 
 implemented in our code, highlighting the need to explore distributed computing frameworks in future work. 
 Therefore, the numerical results for this method have been presented for completeness; however, it is not 
 possible to draw definitive conclusions about its performance.

The focus has been placed on evaluating the performance of the policies constructed from
these usage values by simulating system operation across {RTE} reference scenarios of
uncertainty chronicles, and comparing the resulting
operational costs and performance indicators with those achieved using policies derived from {SDDP}.

Overall, {DADP} proves to be a promising approach for computing usage values in large-scale 
multinode energy systems,
as it avoids the curse of dimensionality by allowing for parallel resolution of nodal problems.
Aggregating prices over 8-hour blocks offers a favourable trade-off between 
solution quality and computational efficiency, even yielding a better mean operational cost than {SDDP}.
This makes it a practical choice when memory or 
computational resources limit the feasibility of hourly price aggregation.

The main advantage of univariate {DADP} methods over multivariate {SDDP} or {SDP} 
(when computationally feasible)
is that the former make the simulation algorithm faster, as the cost-to-go functions are univariate and more 
manageable when modelled as piecewise linear functions. 
In the multivariate case, we consider multivariate piecewise linear approximations. 
In contrast, the univariate case involves the sum of univariate piecewise linear approximations, 
which require fewer constraints for modelling.

An important observation made during the numerical studies
for DADP,
is that the algorithm to improve the price decomposition process
(see~\S\ref{subsubsection:How_to_improve_the_price_decomposition_process})
behaves differently in small and large systems. Its performance depends
not only on the number of nodes but also on the system cost structure:
the more continuous the cost functions are, the better the algorithm performs.
The behaviour also changes with the information structure. In deterministic
tests (not reported here), the algorithm performed poorly; but it improved
in the stochastic case where randomness produces a smoothing effect. 

Finally, in the large multi-node system
(with realistic cost functions),
the algorithm performed satisfactorily, producing good lower bounds in
reasonable computational times.

\newcommand{\noopsort}[1]{} \ifx\undefined\allcaps\def\allcaps#1{#1}\fi

\end{document}